\documentclass[11pt]{amsart}
\usepackage{amscd}
\usepackage{amsfonts}
\usepackage{color}
\usepackage{amsmath,amssymb,amsthm,latexsym}
\usepackage{amscd}

\usepackage{multicol}
\usepackage{enumerate}

\textwidth 160mm \textheight 220mm

\oddsidemargin=10pt
\evensidemargin=10pt

\linespread{1.05}
\topmargin  -7mm

\newtheorem{theorem}{Theorem}[section]
\newtheorem{proposition}[theorem]{Proposition}
\newtheorem{definition}[theorem]{Definition}
\newtheorem{corollary}[theorem]{Corollary}
\newtheorem{lemma}[theorem]{Lemma}
\newtheorem{problem}[theorem]{Problem}
\usepackage{amsmath}
\usepackage{amsfonts}
\usepackage{amsmath,amsthm}
\usepackage{amscd}
\usepackage[all]{xy}
\numberwithin{equation}{section}
\theoremstyle{remark}

\newtheorem{example}[theorem]{\bf Example}
\newcommand{\R}{\mathbb{R}}

\newcommand{\dd}{\mathrm{d}}

\newcommand{\Lf}{\mathcal{L}^f}
\newcommand{\Lfa}{\mathcal{L}^{f_1}}
\newcommand{\Lfb}{\mathcal{L}^{\hat{f}_1}}

\begin{document}

\title[Morse index of minimal products]{\bf{ Morse index of minimal products of minimal submanifolds in spheres}}
\author{Changping Wang, Peng Wang}

\address{School of Mathematics and Statistics, FJKLMAA, Fujian Normal University, Fuzhou 350117, P. R. China}
\email{cpwang@fjnu.edu.cn}
\address{School of Mathematics and Statistics, FJKLMAA, Fujian Normal University, Fuzhou 350117, P. R. China}
\email{pengwang@fjnu.edu.cn}

\begin{abstract} Tang-Zhang \cite{TZ}, Choe-Hoppe \cite{CH}, showed independently that one can produce minimal submanifolds in spheres via Clifford type minimal product of minimal submanifolds. In this note, we show  that the  minimal product is immersed by its first eigenfunctions (of its Laplacian) if and only if the two beginning minimal submanifolds are immersed by their first eigenfunctions.  Moreover, we give estimates of Morse index and nullity of the minimal product. In particular, we show that the Clifford  minimal submanifold $\left(\sqrt{\frac{n_1}{n}}S^{n_1},\cdots,\sqrt{\frac{n_k}{n}}S^{n_k}\right)\subset S^{n+k-1}$ has index $(k-1)(n+k+1)$ and nullity $(k-1)\sum_{1\leq i<j\leq k}(n_i+1)(n_j+1)$ (with $n=\sum n_j$).
\end{abstract}

\maketitle

\vspace{0.5mm}{\bf \ \ ~~Keywords:}    Minimal product; Index; Nullity; Clifford minimal submanifold.
\vspace{0.5mm}

{\bf\ \  ~~ MSC(2020): \hspace{2mm} 53C40, 53A31, 53A30}


\section{Introduction}

The study of minimal submanifolds in spheres plays an important role in the global differential geometry and geometric analysis and has a close relation with spectral geometry. It turns out to be a challenging problem for the study of  global  properties of minimal submanifolds in spheres,  see for instance \cite{Brendle,ElSoufi1,Gu,KW,Lawson,Li-y,Montiel,QT,Simons,TY2,TZ,Yau} and reference therein.

Recently Tang-Zhang \cite{TZ}, Choe-Hoppe \cite{CH} showed independently that one can produce minimal submanifolds in spheres via Clifford type product of minimal submanifolds. Moreover, Tang and Zhang used this construction for isoparametric hypersurfaces to discuss area-minimizing properties of minimal submanifolds \cite{TZ}.

It is natural to discuss further geometric and analytic properties of minimal products of minimal submanifolds from several directions. The first one concerns the spectrum properties of (the Laplacian of ) them in view of Yau's conjecture and a problem proposed recently by Tang and Yan \cite{TY2}. Recall that for minimal hypersurfaces, the well-known Yau's conjecture states that an embedded, closed minimal hypersurface in $S^{n+1}$ has $\lambda_1=n$ with $\lambda_1$ being the first eigenvalue of its Laplacian. We refer to \cite{Brendle}, \cite{CS} and \cite{TY2} for recent progress on this topic. On the other hand, an $n-$dimensional minimal submanifold is called immersed by its  first eigenfunctions if $\lambda_1=n$, which appears naturally in many important geometric problems,  for instance, Willmore conjecture \cite{Li-y,Montiel,ElSoufi1}  and $\lambda_1-$extremal metrics \cite{EGJ}. It is therefore natural to ask whether the minimal product of minimal submanifolds immersed by first eigenfunctions is also immersed by first eigenfunctions, which turns out to be true (see Theorem \ref{thm-first}). In this way, one can construct many new examples of minimal submanifolds immersed by first eigenfunctions of any dimension and any co-dimension greater than $2$, oriented or non-oriented. Moreover, it is not surprising that one can also construct examples of embedded minimal submanifolds {\em not }immersed by first eigenfunctions of higher co-dimension, which was first proposed as a problem in \cite[p. 526]{TY2}, concerning the influence of dimension and co-dimensions on embedded minimal submanifolds {\em immersed or not immersed}  by first eigenfunctions. Examples in terms of minimal products of some focal submanifolds in \cite[Proposition 1.1]{TY2} and totally geodesic spheres give embedded minimal submanifolds of dimension $\geq n+k+2$ and of co-dimension $\geq k+2$ for any $n\geq 1$ and $k\geq 3$. We refer to Section 3 for more details.

Then we focus on the estimate of Morse index and nullity of minimal submanifolds. It is well-known that the seminal work of J. Simons \cite{Simons} showed that all oriented closed minimal submanifolds in spheres are {\em not} stable. Then he gave a lower bound estimate of the Morse index and nullity of  minimal submanifolds in spheres, which turn out to be of great importance in the study of the geometric and analytical properties of minimal submanifolds. For example, the index characterization of Morse index of Clifford torus  \cite{Urbano} is one of the starting points in Marques and Neves' proof of the famous Willmore conjecture in $S^3$ \cite{Marques}. Recently, Kapouleas and Wiygul computed the Morse index and nullity of the Lawson minimal surface $\xi_{g,1}$ \cite{KW}.  We focus on estimates of the Morse index and nullity for minimal products in Section 4. In particular, we compute the Morse index and nullity of all Clifford minimal submanifolds in Proposition \ref{prop-CT} and \ref{prop-CT-N}.

Finally, we  consider the estimate of the length of the second fundamental form of minimal products, including the famous Simons' inequality and Chern's conjecture/problem \cite{Simons,CDK}. We refer to \cite{DX,Gu,TWY,TY3} for recent progress on this topic. For minimal products, we will derive a Simon's
type inequality and discuss briefly  Chern's problem for submanifolds.\vspace{2mm}

The paper is organized as follows: Section 2 contains the geometry of products of  submanifolds. In Section 3 we consider the first eigenvalue estimate of minimal product. Then in Section 4 the index and nullity estimates of minimal product are provided, as well as the index and nullity of all Clifford minimal submanifolds. In Section 5 we give some estimate of the square of the length of the second fundamental form of a minimal product.
We end this paper with an appendix for a proof of an index estimate of an $n-$dimensional minimal submanifold in $S^{n+p+1}$.

\section{The geometry of products of  submanifolds in spheres}
In this section we will collect some basic setup for products of  submanifolds in spheres.

\subsection{The basic setup}
First we will recall briefly the geometry of  products of  submanifolds in spheres, following basically the treatment in Section 4.1 of \cite{TZ} (here we use slightly different notations). Note that although they consider product of minimal submanifolds in \cite{TZ}, the computations work for  {products of arbitrary} submanifolds in spheres.

	Let $f_1:M_1\rightarrow S^{n_1+p_1}$  and  $\hat f_1:\hat M_1\rightarrow S^{\hat n_1+\hat p_1}$  be two  submanifoilds of dimension $n_1$ and $\hat{n}_1$ respectively. Set $n=n_1+\hat n_1$ and $p=p_1+\hat p_1+1$.
	Then $f=(c_1f_1,\hat{c}_1\hat{f}_{1}):M=M_1\times\hat  M_1\rightarrow S^{n+p}$, with $c_1^2+\hat{c}_1^2=1$ and $c_1\hat{c}_1\neq0$, is called  a {\em product } of $f_1$ and $\hat f_1$.

	 Let $\{\sigma_1,\cdots,\sigma_{p_1}\}$ and $\{\tau_1,\cdots,\tau_{\hat{p}_1}\}$ denote the  {local} orthonormal basis of the normal bundle of $f_1$ and $\hat {f}_1$  { around a point} respectively. Then an orthonormal basis of the normal bundle of $f$ is
\[(\sigma_1,0),\cdots,(\sigma_{p_1},0),(0,\tau_1),\cdots,(0,\tau_{\hat p_1}), \eta,\]
where
\[\eta:=(\hat c_1 f_1,-c_1 \hat f_1)\]
 {is a global unit normal vector field of $f$.}
Let $A^{f}$, $A^{f_1}$ and $A^{\hat f_1}$ denote the shape operators  of $f$, $f_1$ and $\hat{f}_{1}$ respectively.
Then  we have (Here we split the tangent bundle of $M$: $TM=TM_1\times T\hat{M}_1$.)
\begin{equation}\label{eq-A1}
A^f_{(\sigma_i,0)}=\left(
                       \begin{array}{cc}
                         \frac{1}{c_1} A^{f_1}_{\sigma_i}& 0 \\
                         0 & 0 \\
                       \end{array}
                     \right),
\end{equation}
and
\begin{equation}\label{eq-A2}A^f_{(0,\tau_j)}=\left(
                       \begin{array}{cc}
                         0 & 0 \\
                0&     \frac{1}{\hat c_1} A^{\hat f_1}_{\tau_j} \\
                       \end{array}
                     \right).
\end{equation}
For $\eta$, we have
\begin{equation}\label{eq-A-eta}
 A^f_{\eta}=\left(
                       \begin{array}{cc}
                         -\frac{\hat c_1}{c_1}I_{n_1} & 0 \\
                0&     \frac{ c_1}{\hat c_1}I_{\hat n_1}\\
                       \end{array}
                     \right).
\end{equation}

Let  \[H=\frac{1}{n}tr( A^f),\ H_1=\frac{1}{n_1}tr( A^{f_1})  \hbox{ and }\hat{H}_1 =\frac{1}{\hat{n}_1}tr( A^{\hat{f}_1})\] denote the mean curvature vectors of $f$, $f_1$ and $\hat{f}_1$ respectively.
Let $S$, $S_1$ and $\hat{S}_1$ denote the square length of the second fundamental form
of $f$, $f_1$ and $\hat{f}_1$ respectively. It is direct to obtain
\begin{lemma}\label{lemma-1}\
\begin{enumerate}
	\item The mean curvature $H$ and the square length $S$ satisfy
	\begin{equation}\label{eq-H}
	H=\frac{1}{n}\left(\frac{n_1}{c_1}H_1,\frac{\hat{n}_1}{\hat{c}_1}\hat{H}_1\right)
	+\frac{1}{n}\left(-\frac{n_1\hat c_1}{c_1}+\frac{\hat{n}_1c_1}{\hat c_1}\right)\eta,
	\end{equation}
 and
	\begin{equation}\label{eq-SS}
	S=\left|A^f\right|^2=
	\frac{n_1\hat c_1^2}{c_1^2}+\frac{\hat n_1c^2_1}{\hat c_1^2}+
	\frac{S_1}{c_1^2}+\frac{\hat S_1}{\hat c_1^2}.\end{equation}
	\item   Let $\nabla^{\perp}$, $\nabla^{\perp,f_1}$ and $\nabla^{\perp,\hat{f}_1}$ denote the normal connection of $f$, $f_1$ and $\hat{f}_1$ respectively. Then
\begin{equation}\label{eq-eta}
	\nabla^\perp \eta\equiv0.
	\end{equation}
Let $X\in\Gamma(TM)$ such that $X=X_1+\hat{X}_1$  with $X_1\in\Gamma(TM_1)$ and $\hat{X}_1\in\Gamma(T\hat{M}_1)$. Then \begin{equation}\label{eq-nabla}
 {\nabla^{\perp}_{f_\ast(X)}\left(\sigma_i,\tau_j\right)
=\left( \frac{1}{c_1}\nabla^{\perp,f_1}_{f_{1\ast}(X_1)}\sigma_i, \frac{1}{\hat c_1} \nabla^{\perp,\hat{f}_1}_{\hat{f}_{1\ast}(\hat{X}_1)}\tau_j\right).}
\end{equation}

\end{enumerate}	
\end{lemma}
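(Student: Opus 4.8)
The entire statement is a direct consequence of the Gauss--Weingarten equations together with the explicit shape operators \eqref{eq-A1}--\eqref{eq-A-eta} recorded above. The plan is to work in $\mathbb{R}^{n+p+1}=\mathbb{R}^{n_1+p_1+1}\times\mathbb{R}^{\hat{n}_1+\hat{p}_1+1}$, using $f_\ast(X_1+\hat{X}_1)=\big(c_1 f_{1\ast}X_1,\ \hat{c}_1\hat{f}_{1\ast}\hat{X}_1\big)$ for $X_1\in\Gamma(TM_1)$, $\hat{X}_1\in\Gamma(T\hat{M}_1)$, and the $f$-orthogonal splitting of $\mathbb{R}^{n+p+1}$ into $f_\ast(TM)$, the span of $\{(\sigma_i,0),(0,\tau_j),\eta\}$, and the line $\mathbb{R}f$; recall also that the Levi-Civita connection of $S^{n+p}$ is $\bar\nabla_Y Z=D_Y Z+\la Y,Z\ra f$, with $D$ the Euclidean derivative.

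For part (1), both identities follow by taking traces of \eqref{eq-A1}--\eqref{eq-A-eta}. Writing $H=\tfrac{1}{n}\sum_\alpha \mathrm{tr}(A^f_{e_\alpha})\,e_\alpha$ over the orthonormal normal frame $e_\alpha\in\{(\sigma_i,0),(0,\tau_j),\eta\}$, the block-diagonal forms give $\mathrm{tr}(A^f_{(\sigma_i,0)})=\tfrac{n_1}{c_1}\la H_1,\sigma_i\ra$, $\mathrm{tr}(A^f_{(0,\tau_j)})=\tfrac{\hat{n}_1}{\hat{c}_1}\la\hat{H}_1,\tau_j\ra$ and $\mathrm{tr}(A^f_\eta)=-\tfrac{n_1\hat{c}_1}{c_1}+\tfrac{\hat{n}_1 c_1}{\hat{c}_1}$; summing and using $\sum_i\la H_1,\sigma_i\ra(\sigma_i,0)=(H_1,0)$ and $\sum_j\la\hat{H}_1,\tau_j\ra(0,\tau_j)=(0,\hat{H}_1)$ yields \eqref{eq-H}. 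Similarly $S=|A^f|^2=\sum_\alpha\mathrm{tr}\!\big((A^f_{e_\alpha})^2\big)$; by \eqref{eq-A1}--\eqref{eq-A2} the $(\sigma_i,0)$- and $(0,\tau_j)$-terms contribute $\tfrac{1}{c_1^2}\sum_i|A^{f_1}_{\sigma_i}|^2=\tfrac{S_1}{c_1^2}$ and $\tfrac{1}{\hat{c}_1^2}\sum_j|A^{\hat{f}_1}_{\tau_j}|^2=\tfrac{\hat{S}_1}{\hat{c}_1^2}$, while by \eqref{eq-A-eta} the $\eta$-term contributes $\mathrm{tr}\!\big((A^f_\eta)^2\big)=\tfrac{n_1\hat{c}_1^2}{c_1^2}+\tfrac{\hat{n}_1 c_1^2}{\hat{c}_1^2}$; adding these gives \eqref{eq-SS}.

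For part (2), the key observation is that $\eta$ and the $(\sigma_i,0),(0,\tau_j)$ carry no conformal rescaling, so one may differentiate them directly. Since $\eta=(\hat{c}_1 f_1,-c_1\hat{f}_1)$ one gets $\bar\nabla_{f_\ast(X)}\eta=D_{f_\ast(X)}\eta=(\hat{c}_1 f_{1\ast}X_1,\ -c_1\hat{f}_{1\ast}\hat{X}_1)$, the correction term dropping because $\la f_\ast(X),\eta\ra=0$; this vector is orthogonal to every $(\sigma_i,0)$ and $(0,\tau_j)$ since tangent vectors of $f_1$ and $\hat{f}_1$ are orthogonal to their normal spaces, and orthogonal to $\eta$ since $\la\eta,\eta\ra\equiv1$; hence its normal component along $f$ vanishes, which is \eqref{eq-eta}. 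For \eqref{eq-nabla}, compute $\bar\nabla_{f_\ast(X)}(\sigma_i,\tau_j)=D_{f_\ast(X)}(\sigma_i,\tau_j)=(d\sigma_i(X_1),\ d\tau_j(\hat{X}_1))$ and expand each coordinate by the Gauss--Weingarten equation of the corresponding factor, e.g.\ $d\sigma_i(X_1)=-f_{1\ast}(A^{f_1}_{\sigma_i}X_1)+\nabla^{\perp,f_1}_{f_{1\ast}(X_1)}\sigma_i$; the tangential pieces reassemble into $f_\ast$ of a tangent vector of $M$ and are discarded, while the two normal pieces already lie in the normal bundle of $f$, and collecting them yields \eqref{eq-nabla}.

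I do not expect a serious obstacle: the argument is bookkeeping with the block decompositions of $TM=TM_1\times T\hat{M}_1$ and of the normal bundle. The one point that requires care is keeping straight which objects are rescaled by $c_1,\hat{c}_1$ --- only $f$ itself, hence $f_\ast$ and $A^f$, while the normal sections $\sigma_i,\tau_j,\eta$ are not --- since this is exactly what produces factors such as the $\tfrac{1}{c_1}$ in \eqref{eq-A1}; one must also pass correctly through the Levi-Civita connection of $S^{n+p}$ so that components along $\mathbb{R}f$ are automatically annihilated.
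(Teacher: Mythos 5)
Your plan — recover $H$ and $S$ by taking traces of the block-diagonal shape operators \eqref{eq-A1}--\eqref{eq-A-eta}, and establish the normal-connection statements by differentiating $\eta$ and $(\sigma_i,\tau_j)$ in the ambient Euclidean space and reading off the component normal to $f$ — is precisely the route the paper takes, and your treatments of part (1) and of $\nabla^{\perp}\eta\equiv 0$ are fine.

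There is, however, a mismatch at the final step that you should not gloss over. You start from the correct differential $f_\ast(X_1+\hat{X}_1)=\bigl(c_1 f_{1\ast}X_1,\ \hat{c}_1\hat{f}_{1\ast}\hat{X}_1\bigr)$ (this is the relation that makes \eqref{eq-A1}--\eqref{eq-A-eta} hold), and correctly find $D_{f_\ast X}(\sigma_i,\tau_j)=\bigl(d\sigma_i(X_1),\ d\tau_j(\hat{X}_1)\bigr)$. Projecting to the normal bundle of $f$ then produces
\[
\nabla^{\perp}_{f_\ast X}(\sigma_i,\tau_j)
=\Bigl(\nabla^{\perp,f_1}_{f_{1\ast}X_1}\sigma_i,\ \nabla^{\perp,\hat{f}_1}_{\hat{f}_{1\ast}\hat{X}_1}\tau_j\Bigr),
\]
with \emph{no} factors of $1/c_1$, $1/\hat{c}_1$. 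This is not the printed formula \eqref{eq-nabla}, which carries the extra scalars; the paper's own proof obtains them from the different (and, with your normalization, incompatible) relation $f_\ast(X)=\tfrac{1}{c_1}f_{1\ast}(X_1)+\tfrac{1}{\hat{c}_1}\hat{f}_{1\ast}(\hat{X}_1)$. So you cannot simply assert that your computation ``yields \eqref{eq-nabla}'': you must either exhibit where the factors $1/c_1$, $1/\hat{c}_1$ arise in your setup, or note explicitly that they do not. In fact your scalar-free version is the one consistent with the rest of the paper: taking the $f$-orthonormal frame $\bigl\{\tfrac{1}{c_1}E_a,\ \tfrac{1}{\hat{c}_1}\hat{E}_b\bigr\}$ with $E_a$, $\hat{E}_b$ orthonormal for $g_1$, $\hat{g}_1$, one gets the factors $1/c_1^2$, $1/\hat{c}_1^2$ appearing in $\Delta^{\perp,f}$ in the proof of Lemma \ref{lemma-m} only if $\nabla^{\perp}$ carries no additional $1/c_1$; inserting the extra $1/c_1$ of \eqref{eq-nabla} would give $1/c_1^4$ instead. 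In short: same method as the paper, but the last equality as you wrote it does not hold, and you should flag and resolve the scalar discrepancy rather than silently absorb it.
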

\begin{proof}
	(1)  comes from the expression of $A^f$ in terms of  $A^{f_1}$ and $A^{\hat{f}_1}$ as above.
	
Consider  \eqref{eq-eta}. We have
\[\dd\eta=\left(\hat{c}_1\dd f_1,-c_1\dd \hat{f}_1\right)\perp\left\{(\sigma_i,\tau_j),1\leq i\leq p_1,1\leq j\leq \hat{p}_1\right\}\]
since $\dd f_1\perp \sigma_i$ and $\dd \hat{f}_1\perp\tau_j$. \eqref{eq-eta} follows then.
 {The equation \eqref{eq-nabla} of (2) comes from the choice of the basis of normal bundle and the fact that $f_{\ast}(X)=\frac{1}{c_1}f_{1\ast}(X_1)+\frac{1}{\hat c_1}\hat{f}_{1\ast}(\hat{X}_1)$.}
\end{proof}
\subsection{Geometric properties of product of submanifolds}
Lemma 2.1 leads to the following
\begin{theorem}\cite{TZ,CH}
	Let $f_1:M_1\rightarrow S^{n_1+p_1}$  and  $\hat f_1:\hat M_1\rightarrow S^{\hat n_1+\hat p_1}$  be two  submanifolds   of dimension $n_1$ and $\hat{n}_1$ respectively. Set $n=n_1+\hat n_1$ and $p=p_1+\hat p_1+1$.
	The $f=(c_1f_1,\hat{c}_1\hat{f}_{1}):M=M_1\times\hat  M_1\rightarrow S^{n+p}$ is a minimal submanifold if and only if both $f_1$ and $\hat{f}_1$ are minimal and  \[(c_1,\hat{c}_1)=\left(\sqrt{\frac{n_1}{n}}, \sqrt{\frac{\hat n_1}{n}}\right).\]	
\end{theorem}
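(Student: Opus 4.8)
The plan is to read off everything from the explicit matrix expressions \eqref{eq-A1}, \eqref{eq-A2}, \eqref{eq-A-eta} for the shape operators of $f$. The mean curvature vector $H$ of $f$ is the normal vector whose components are $\tfrac1n\operatorname{tr}$ of the shape operators in each normal direction, so $H$ is already computed in \eqref{eq-H}. Thus $f$ is minimal, i.e. $H\equiv 0$, if and only if every component of \eqref{eq-H} vanishes. I would split this into two sets of conditions: the vanishing of the $(\sigma_i,0)$-- and $(0,\tau_j)$--components, and the vanishing of the $\eta$--component.

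First I would treat the tangential-type normal directions. From \eqref{eq-A1}, $\operatorname{tr} A^f_{(\sigma_i,0)} = \tfrac{1}{c_1}\operatorname{tr} A^{f_1}_{\sigma_i}$, and since $c_1\ne 0$ this vanishes for all $i$ iff $H_1=0$; similarly $\operatorname{tr} A^f_{(0,\tau_j)} = \tfrac{1}{\hat c_1}\operatorname{tr} A^{\hat f_1}_{\tau_j}$ vanishes for all $j$ iff $\hat H_1 = 0$. So the first block of conditions is exactly ``$f_1$ and $\hat f_1$ are both minimal.'' Next, the $\eta$--component of \eqref{eq-H}: from \eqref{eq-A-eta}, $\operatorname{tr} A^f_\eta = -\tfrac{\hat c_1}{c_1}n_1 + \tfrac{c_1}{\hat c_1}\hat n_1$, which vanishes iff $\tfrac{\hat c_1}{c_1}n_1 = \tfrac{c_1}{\hat c_1}\hat n_1$, i.e. iff $\hat c_1^2 n_1 = c_1^2 \hat n_1$. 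Combining this with the constraint $c_1^2+\hat c_1^2 = 1$ gives a $2\times 2$ linear system in $(c_1^2,\hat c_1^2)$ whose unique solution is $c_1^2 = n_1/n$, $\hat c_1^2 = \hat n_1/n$; since $c_1,\hat c_1>0$ this forces $(c_1,\hat c_1) = (\sqrt{n_1/n},\sqrt{\hat n_1/n})$.

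Conversely, if $f_1,\hat f_1$ are minimal and $(c_1,\hat c_1)=(\sqrt{n_1/n},\sqrt{\hat n_1/n})$, then all three families of traces vanish by the same computations, so \eqref{eq-H} gives $H\equiv 0$ and $f$ is minimal. This establishes both implications.

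There is essentially no hard step here: the content is entirely in Lemma \ref{lemma-1}(1), which is already proved, so the argument is a direct logical unpacking of \eqref{eq-H} together with solving a trivial linear system. The only point that deserves a word of care is the hypothesis $c_1\hat c_1\ne 0$, which is what lets one pass from $\tfrac1{c_1}\operatorname{tr}A^{f_1}_{\sigma_i}=0$ and $\tfrac1{\hat c_1}\operatorname{tr}A^{\hat f_1}_{\tau_j}=0$ to the minimality of the factors; without it the product construction is degenerate anyway. I would also remark (as is implicit in \cite{TZ,CH}) that this is the Clifford-type normalization and that it makes $\eta$ a parallel normal field with $A^f_\eta$ proportional to the identity on each factor, which is the structural fact exploited in the later index computations.
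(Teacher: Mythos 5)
Your proof is correct and is exactly the argument the paper implies when it says ``Lemma 2.1 leads to the following'' before stating the theorem: you read off $H=0$ from the explicit formula \eqref{eq-H}, split into the $(\sigma_i,0)$-, $(0,\tau_j)$- and $\eta$-components, and solve the resulting system $n_1\hat c_1^2=\hat n_1 c_1^2$, $c_1^2+\hat c_1^2=1$. No gap; this matches the paper's route, including the role of $c_1\hat c_1\neq 0$ in concluding minimality of the factors.
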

Moreover, we  have the following result which could be known before.  We refer to \cite{Terng} for the study of submanifolds with flat normal bundle.
\begin{theorem}
We retain the notion as above. Then
\begin{enumerate}
	\item
	$f$ is a  submanifold with parallel mean curvature vector if and only if both $f_1$ and $\hat{f}_1$ are with parallel mean curvature vector.
	\item
	$f$ is a submanifold with flat normal bundle if and only if both $f_1$ and $\hat{f}_1$ have flat normal bundle.
\end{enumerate}
\end{theorem}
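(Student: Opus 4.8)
The plan is to reduce both statements to the structure formulas of Lemma \ref{lemma-1} together with the shape operator formulas \eqref{eq-A1}--\eqref{eq-A-eta}; once these are in hand, each assertion becomes a short verification on the three pieces $(\sigma_i,0)$, $(0,\tau_j)$, $\eta$ that make up the normal bundle of $f$.

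For (1), I would start from \eqref{eq-H}: $H=\frac{1}{n}\big(\frac{n_1}{c_1}H_1,\frac{\hat n_1}{\hat c_1}\hat H_1\big)+c\,\eta$, where $c=\frac{1}{n}\big(-\frac{n_1\hat c_1}{c_1}+\frac{\hat n_1 c_1}{\hat c_1}\big)$ is a \emph{constant}. By \eqref{eq-eta} the term $c\,\eta$ is $\nabla^{\perp}$-parallel, so $\nabla^{\perp}H=0$ iff $\nabla^{\perp}\big(\frac{n_1}{c_1}H_1,\frac{\hat n_1}{\hat c_1}\hat H_1\big)=0$. Next, \eqref{eq-nabla} shows that the normal bundle of $f$ splits orthogonally as $\mathcal N_1\oplus\mathcal N_2\oplus\R\eta$ with $\mathcal N_1=\mathrm{span}\{(\sigma_i,0)\}$, $\mathcal N_2=\mathrm{span}\{(0,\tau_j)\}$, that each summand is $\nabla^{\perp}$-parallel, and that under the isomorphism $(\sigma_i,0)\mapsto\sigma_i$ identifying $\mathcal N_1$ with the pull-back $\pi_1^{*}N_{f_1}$ of the normal bundle of $f_1$ (here $\pi_1\colon M\to M_1$) the restricted connection $\nabla^{\perp}|_{\mathcal N_1}$ is the pull-back of $\nabla^{\perp,f_1}$ up to a positive constant, and symmetrically for $\mathcal N_2$ and $\hat f_1$. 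Since $(H_1,0)$ and $(0,\hat H_1)$ are precisely the $\mathcal N_1$- and $\mathcal N_2$-components of the vector field above, and since pulling back along the projection neither creates nor destroys parallelism, one gets $\nabla^{\perp}H=0$ iff $\nabla^{\perp,f_1}H_1=0$ and $\nabla^{\perp,\hat f_1}\hat H_1=0$, which is (1).

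For (2), it is cleanest to use the Ricci equation: since $S^{n+p}$ has constant curvature, the ambient term in the Ricci equation vanishes, so $f$ has flat normal bundle iff all of its shape operators pairwise commute; likewise $f_1$ (whose normal space in $S^{n_1+p_1}$ is spanned by the $\sigma_i$) and $\hat f_1$. By \eqref{eq-A1}--\eqref{eq-A-eta} every shape operator of $f$ is block diagonal for $TM=TM_1\oplus T\hat M_1$: $A^{f}_{(\sigma_i,0)}=\mathrm{diag}(\tfrac{1}{c_1}A^{f_1}_{\sigma_i},0)$, $A^{f}_{(0,\tau_j)}=\mathrm{diag}(0,\tfrac{1}{\hat c_1}A^{\hat f_1}_{\tau_j})$, and $A^{f}_{\eta}=\mathrm{diag}(-\tfrac{\hat c_1}{c_1}I_{n_1},\tfrac{c_1}{\hat c_1}I_{\hat n_1})$. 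Hence $A^{f}_{\eta}$, a scalar on each block, commutes with everything; $A^{f}_{(\sigma_i,0)}$ and $A^{f}_{(0,\tau_j)}$ commute since they act on complementary blocks; and $[A^{f}_{(\sigma_i,0)},A^{f}_{(\sigma_k,0)}]=\mathrm{diag}(\tfrac{1}{c_1^{2}}[A^{f_1}_{\sigma_i},A^{f_1}_{\sigma_k}],0)$, with the analogous identity for the $\tau$'s. Therefore all shape operators of $f$ commute iff all $A^{f_1}_{\sigma_i}$ commute among themselves and all $A^{\hat f_1}_{\tau_j}$ commute among themselves, i.e. iff $f_1$ and $\hat f_1$ both have flat normal bundle. (Alternatively, argue directly: by \eqref{eq-eta} $R^{\perp}(\cdot,\cdot)\eta=0$, and by \eqref{eq-nabla} $R^{\perp}(\cdot,\cdot)(\sigma_i,0)$ lies in $\mathcal N_1$ and equals $(R^{\perp,f_1}(\cdot,\cdot)\sigma_i,0)$ up to a constant, symmetrically for $(0,\tau_j)$.)

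The whole argument is essentially bookkeeping, and the only point requiring care is the non-interference of the three blocks: that $(\sigma_i,0)$, $(0,\tau_j)$ and $\eta$ each remain in their own sub-bundle under $\nabla^{\perp}$, and that a normal field such as $(H_1,0)$ has vanishing derivative along $T\hat M_1$ because it does not depend on the $\hat M_1$-factor. Both facts are already packaged in \eqref{eq-eta}--\eqref{eq-nabla} and \eqref{eq-A1}--\eqref{eq-A-eta}, so I expect no genuine obstacle beyond organizing the verification.
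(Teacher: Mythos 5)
Your argument for part (1) follows the paper's own route: both combine \eqref{eq-H}, \eqref{eq-eta}, and \eqref{eq-nabla} to split $T^\perp M$ into three $\nabla^\perp$-invariant pieces $\mathcal N_1\oplus\mathcal N_2\oplus\R\eta$ and then read off that $H$ is $\nabla^\perp$-parallel exactly when $H_1$ and $\hat H_1$ are, the $\eta$-component being a constant multiple of a parallel section. Your primary argument for part (2) is genuinely different: you invoke the Ricci equation in a constant-curvature ambient space, reducing flatness of the normal bundle to pairwise commutativity of shape operators, and then exploit the block-diagonal forms \eqref{eq-A1}--\eqref{eq-A-eta}. The paper instead argues directly with the curvature of $\nabla^\perp$ via \eqref{eq-nabla} and \eqref{eq-eta}, which is precisely the parenthetical alternative you sketch at the end. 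Your route has the merit of being a purely algebraic check requiring only the shape-operator formulas and no differentiation of normal frames; the paper's is shorter because the $\nabla^\perp$-decomposition is already in place from part (1). Both are correct. One cosmetic caution: the phrase ``the restricted connection $\nabla^\perp|_{\mathcal N_1}$ is the pull-back of $\nabla^{\perp,f_1}$ up to a positive constant'' is not literally meaningful, since a constant rescaling of a connection is not a connection; what \eqref{eq-nabla} actually records is that the covariant derivative of the frame $\{(\sigma_i,0)\}$ differs from that of $\{\sigma_i\}$ by a constant factor, which changes neither which sections are parallel nor whether the curvature vanishes, so your conclusions go through unchanged.
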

\begin{proof}
	The combination of \eqref{eq-H}, \eqref{eq-nabla} and \eqref{eq-eta} yields (1).
	
	For (2), we already see by \eqref{eq-eta} that $\eta$ is parallel. By \eqref{eq-nabla}, we see that $f$ has flat normal bundle if and only if both $f_1$ and $\hat{f}_1$ are of  flat normal bundle.
\end{proof}

\begin{example}	Since hypersurfaces of spheres are automatically with flat normal bundles, products of them give many submanifolds of spheres with flat normal bundles.
 \begin{enumerate}
 \item	 If we assume the hypersurfaces  being  minimal, we obtain many examples of minimal submanifolds with flat normal bundle in spheres. Repeating this procedure, one can produce more examples of minimal submanifolds with flat normal bundle in spheres.
 \item If we assume the hypersurfaces being of constant mean curvature, then any product of them gives a submanifold with both parallel mean curvature vector and flat normal bundle. Repeating this procedure, one can produce many examples of submanifolds with both parallel mean curvature vector and flat normal bundle. To be concrete, let $f_j:M_j^{n_j}\rightarrow S^{n_j+1}$, $1\leq j\leq k$, be constant mean curvature hypersurfaces, and let $c_j\in\mathbb R^+$, $j=1,\cdots,k$, be constants with $\sum c_j^2=1$. Then
     \[f:=\left(c_1f_1,\cdots,c_kf_k\right):M_1^{n_1}\times\cdots\times M_k^{n_k}\rightarrow S^{n+2k-1}\]
is a  submanifold with both parallel mean curvature vector and flat normal bundle in $S^{n+2k-1}$ with $n=n_1+\cdots+n_k$.
  \end{enumerate}

\end{example}

\section{On minimal product immersed by its first eigenfunctions}

\subsection{On the first eigenvalue of a minimal product}
Due to the famous Takahashi theorem,  an isometric submanifold $f:M^n\rightarrow S^{n+p}$ is minimal if and only if
 \[\Delta^f f=-n f.\]
Here $\Delta^f$ denote the Laplacian operator of $f$ with respect to the induced metric. As a consequence, the coordinate functions of an isometric minimal submanifold are eigenfunctions of the Laplacian w.r.t. the eigenvalue $n$. In this sense, the geometry of minimal submanifolds are closely related with the spectrum of the Laplacian of minimal submanifolds. Here we refer to \cite{BGM,C} for standard references of the spectrum theory of Riemannian manifolds.  We denote by
\[Spec_f=\left\{ 0<\lambda_1^f\leq\lambda_2^f\cdots \rightarrow +\infty \right\}\]
the spectrum of the Laplacian of $f$,
with
\[\Delta^f\phi^f_\alpha+\lambda^f_{\alpha}\phi^f_\alpha=0\]
for some non-zero function $\phi^f_\alpha$. The function $\phi^f_\alpha$ is called an   eigenfunction w.r.t $\lambda^f_{\alpha}$. We denote by
\[E^f_{\lambda_\alpha}:=Span_{\R}\{\phi^f_\alpha|\Delta^f\phi^f_\alpha+\lambda^f_{\alpha}\phi^f_\alpha=0\}\] to be the linear space spanned by the eigenfunctions w.r.t $\lambda^f_{\alpha}$. Note that $0$ is always an eigenvalue of $f$ with $E^f_{0}=Span_{\R}\{\phi\equiv1 \hbox{ on } M\}.$

An $n-$dimensional minimal submanifold is called immersed by its first eigenfunctions if the first eigenvalue of the Laplacian of it is equal to $n$.  Minimal submanifolds immersed by the first eigenfunctions are very important due to the fact that they play important roles in many geometric problems (See for instance \cite{Li-y}, \cite{EGJ}, \cite{ElSoufi1}, \cite{Montiel}, \cite{TY2}).

We have the following
\begin{theorem}\label{thm-first} $f$ is immersed by its first eigenfunctions if and only if both $f_1$ and $\hat f_1$ are immersed by their first eigenfunctions. That is, $\lambda_1^f=n$ if and only if both $\lambda_1^{f_1}=n_1$ and $\lambda_1^{\hat{f}_{1}}=\hat n_1$.
\end{theorem}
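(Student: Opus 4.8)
The plan is to exploit the fact that, being built as a Clifford-type product, $f$ induces a Riemannian \emph{product} metric on $M=M_1\times\hat M_1$, so that its Laplacian, and hence its spectrum, decompose explicitly in terms of those of $f_1$ and $\hat f_1$. \textbf{Step 1 (the induced metric is a rescaled product).} Splitting the ambient $\R^{n+p+1}$ as $\R^{n_1+p_1+1}\times\R^{\hat n_1+\hat p_1+1}$ and writing $f=(c_1f_1,\hat c_1\hat f_1)$, one has $f_\ast(X_1,\hat X_1)=(c_1 f_{1\ast}X_1,\hat c_1\hat f_{1\ast}\hat X_1)$ for $X_1\in\Gamma(TM_1)$, $\hat X_1\in\Gamma(T\hat M_1)$; taking inner products shows that the metric $g$ induced by $f$ equals $c_1^2 g_1\oplus\hat c_1^2\hat g_1$, where $g_1$ and $\hat g_1$ are the metrics induced by $f_1$ and $\hat f_1$. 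Thus $(M,g)$ is the Riemannian product $(M_1,c_1^2 g_1)\times(\hat M_1,\hat c_1^2\hat g_1)$; this is the metric counterpart of the block form of the shape operators in \eqref{eq-A1}--\eqref{eq-A-eta}.

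\textbf{Step 2 (decomposition of the Laplacian and of the spectrum).} Since $f$ is minimal, the Tang--Zhang/Choe--Hoppe theorem recalled above gives that $f_1,\hat f_1$ are minimal and $c_1^2=n_1/n$, $\hat c_1^2=\hat n_1/n$. On a Riemannian product the Laplacian is the sum of the Laplacians of the factors, and rescaling a metric by a constant $c^2$ divides its Laplacian by $c^2$; hence
\[
\Delta^{f}=\frac{n}{n_1}\,\Delta^{f_1}+\frac{n}{\hat n_1}\,\Delta^{\hat f_1}.
\]
As $M_1$ and $\hat M_1$ are closed, both factor spectra are discrete and the products $\phi^{f_1}_\alpha\,\phi^{\hat f_1}_\beta$ of eigenfunctions form a complete orthogonal family in $L^2(M)$ (a standard consequence of Fubini's theorem). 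Therefore, counting multiplicities and allowing the eigenvalue $0$ in each factor,
\[
Spec_f=\Bigl\{\tfrac{n}{n_1}\lambda^{f_1}_\alpha+\tfrac{n}{\hat n_1}\lambda^{\hat f_1}_\beta\ :\ \lambda^{f_1}_\alpha\in\{0\}\cup Spec_{f_1},\ \lambda^{\hat f_1}_\beta\in\{0\}\cup Spec_{\hat f_1}\Bigr\}.
\]

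\textbf{Step 3 (identifying $\lambda_1^f$ and concluding).} Every element of this set is $\geq0$, and any element receiving a positive contribution from both factors is at least $\frac{n}{n_1}\lambda_1^{f_1}+\frac{n}{\hat n_1}\lambda_1^{\hat f_1}$, hence strictly larger than each of $\frac{n}{n_1}\lambda_1^{f_1}$ and $\frac{n}{\hat n_1}\lambda_1^{\hat f_1}$ (both of which lie in $Spec_f$, being obtained by pairing with the constant eigenfunction of the other factor). Consequently
\[
\lambda_1^{f}=\min\Bigl\{\tfrac{n}{n_1}\lambda_1^{f_1},\ \tfrac{n}{\hat n_1}\lambda_1^{\hat f_1}\Bigr\}.
\]
By Takahashi's theorem the coordinate functions of the minimal submanifolds $f_1$ and $\hat f_1$ are eigenfunctions with eigenvalues $n_1$ and $\hat n_1$, so $\lambda_1^{f_1}\leq n_1$ and $\lambda_1^{\hat f_1}\leq\hat n_1$, and both entries in the last minimum are $\leq n$. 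Hence $\lambda_1^f=n$ if and only if both entries equal $n$, i.e. $\lambda_1^{f_1}=n_1$ and $\lambda_1^{\hat f_1}=\hat n_1$; unwinding the definition, $f$ is immersed by its first eigenfunctions exactly when both $f_1$ and $\hat f_1$ are.

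I do not expect a genuine obstacle: the whole argument rests on the product structure of the induced metric established in Step 1, after which everything reduces to a routine computation with the spectrum of a Riemannian product. The only point deserving a word of care is the completeness assertion in Step 2 — that products of factor eigenfunctions exhaust $Spec_f$ rather than merely forming a subfamily of its eigenfunctions — which is exactly where compactness of $M_1$ and $\hat M_1$ enters.
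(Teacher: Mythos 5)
Your argument is correct and takes essentially the same route as the paper: the paper likewise writes $\Delta^f=\tfrac{n}{n_1}\Delta^{f_1}+\tfrac{n}{\hat n_1}\Delta^{\hat f_1}$, invokes the spectral decomposition of a Riemannian product (citing Berger--Gauduchon--Mazet, Prop.\ A.II.3, where you instead sketch the Fubini/completeness argument), deduces $\lambda_1^f=\min\{\tfrac{n}{n_1}\lambda_1^{f_1},\tfrac{n}{\hat n_1}\lambda_1^{\hat f_1}\}$, and finishes with the Takahashi bound $\lambda_1^{f_i}\le n_i$. The only difference is that you spell out Step~1 (that the induced metric is the scaled product $c_1^2g_1\oplus\hat c_1^2\hat g_1$), which the paper takes as given.
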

Theorem \ref{thm-first} is a simple application of the following
\begin{proposition} \label{prop-spec}$($ {Page 144, Proposition A.II.3 of \cite{BGM}}$)$ Let $(N,g)=(N_1\times N_2, g_1\times g_2)$ be a Riemannian manifold by Riemannian product of $(N_1,g_1)$ and $(N_2,g_2)$. Let $Spec_N$, $Spec_{N_1}$ and $Spec_{N_2}$ denote the spectrum of $\Delta^N$, $\Delta^{N_1}$ and  $\Delta^{N_2}$ respectively. Then $\Delta^N=\Delta^{N_1}+\Delta^{N_2}$. Moreover, we have
  \[Spec_N=\left\{\lambda^N|\lambda^N=\lambda^{N_1}+\lambda^{N_2} \hbox{ with }\lambda^{N_1}\in Spec_{N_1}  \hbox{ and }\lambda^{N_2}\in Spec_{N_2} \right\},
\]
and\[
E^N_{\lambda^N}=Span_{\R}\left\{( \phi^{N_1} \circ\pi_1)\cdot(\phi^{N_2}\circ\pi_2)|\phi^{N_1}\in E^{N_1}_{\lambda^{N_1}} \hbox{ and }\phi^{N_2}\in E^{N_2}_{\lambda^{N_2}} \right\}.\]

Here $\pi_i:N=N_1\times N_2\rightarrow N_i$, $i=1,2$, denotes the natural projection.
\end{proposition}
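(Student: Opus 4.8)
The plan is to establish the three assertions in turn: the operator identity $\Delta^N=\Delta^{N_1}+\Delta^{N_2}$ by a local coordinate computation, the inclusion "products of eigenfunctions are eigenfunctions'' by differentiating, and then the reverse inclusions — that such products exhaust each eigenspace and that no other eigenvalues occur — by a density argument in $L^2$. Throughout one uses that $N_1$, $N_2$ (hence $N$) are closed, so that each Laplacian has discrete spectrum and an $L^2$-complete orthonormal family of smooth eigenfunctions.

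First, the identity $\Delta^N=\Delta^{N_1}+\Delta^{N_2}$, understood via the pullbacks $\pi_1^\ast,\pi_2^\ast$, is local and is immediate from the product structure of the metric. In coordinates $(x^i)$ on an open set of $N_1$ and $(y^a)$ on an open set of $N_2$, the metric $g=g_1\times g_2$ is block diagonal, so $g^{\mu\nu}$ is block diagonal and $\sqrt{\det g}=\sqrt{\det g_1}\cdot\sqrt{\det g_2}$ with the first factor independent of $y$ and the second independent of $x$. Inserting this into $\Delta=\tfrac{1}{\sqrt{\det g}}\,\partial_\mu\!\left(\sqrt{\det g}\,g^{\mu\nu}\partial_\nu\right)$ and separating $x$- and $y$-derivatives gives the claim. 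Consequently, if $\Delta^{N_1}\phi^{N_1}=-\lambda^{N_1}\phi^{N_1}$ and $\Delta^{N_2}\phi^{N_2}=-\lambda^{N_2}\phi^{N_2}$, then for $F:=(\phi^{N_1}\circ\pi_1)\cdot(\phi^{N_2}\circ\pi_2)$ one gets $\Delta^N F=-(\lambda^{N_1}+\lambda^{N_2})F$, since $\Delta^{N_1}$ acts only on the first factor and $\Delta^{N_2}$ only on the second. Hence every sum $\lambda^{N_1}+\lambda^{N_2}$ lies in $Spec_N$, and $W_{\mu}:=Span_{\R}\{(\phi^{N_1}\circ\pi_1)(\phi^{N_2}\circ\pi_2):\lambda^{N_1}+\lambda^{N_2}=\mu\}\subseteq E^N_{\mu}$.

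For the reverse inclusions I would work in $L^2$. Choose, by the spectral theorem on the closed manifolds $N_1$ and $N_2$, orthonormal bases $\{\phi^{N_1}_i\}_i$ of $L^2(N_1)$ and $\{\phi^{N_2}_a\}_a$ of $L^2(N_2)$ consisting of eigenfunctions. By Fubini the family $\{(\phi^{N_1}_i\circ\pi_1)(\phi^{N_2}_a\circ\pi_2)\}_{i,a}$ is orthonormal in $L^2(N)$, and in fact an orthonormal basis: under the identification $L^2(N)\cong L^2(N_1)\,\widehat{\otimes}\,L^2(N_2)$ it corresponds to the product of the two chosen bases. Each of its members is an eigenfunction of $\Delta^N$ by the previous paragraph, so $L^2(N)=\bigoplus_{\mu} W_{\mu}$ as a Hilbert sum, with $W_{\mu}\subseteq E^N_{\mu}$ and $W_{\mu}\perp W_{\mu'}$ for $\mu\ne\mu'$. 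Comparing with the spectral decomposition $L^2(N)=\bigoplus_{\nu} E^N_{\nu}$: if some inclusion $W_{\mu}\subseteq E^N_{\mu}$ were strict, pick $0\neq\psi\in E^N_{\mu}\ominus W_{\mu}$; then $\psi\perp W_{\mu'}$ for $\mu'\ne\mu$ (since $\psi\in E^N_{\mu}\perp E^N_{\mu'}\supseteq W_{\mu'}$) and $\psi\perp W_{\mu}$ by choice, so $\psi\perp\bigoplus_{\mu'} W_{\mu'}=L^2(N)$, a contradiction. The same argument rules out an eigenvalue $\nu$ not of the form $\lambda^{N_1}+\lambda^{N_2}$. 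Hence $W_{\mu}=E^N_{\mu}$ for every eigenvalue and $Spec_N$ is exactly the set of sums, which is the assertion.

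The main obstacle — and the only step that is more than bookkeeping — is this last density/orthogonality argument: it requires the spectral theorem for the Laplacian on the compact factors and the Hilbert-space identification $L^2(N_1\times N_2)\cong L^2(N_1)\,\widehat{\otimes}\,L^2(N_2)$. Compactness of $N_1$ and $N_2$ is essential here; for the application in this paper that is automatic, since the factors are closed minimal submanifolds of spheres. Everything else — the coordinate computation and the verification that products of eigenfunctions are eigenfunctions — is routine and could be compressed to a few lines.
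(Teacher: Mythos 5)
Your proof is correct. The paper itself does not prove this statement — it is quoted directly from Berger--Gauduchon--Mazet (Proposition A.II.3) — and your argument is essentially the standard one given there: separation of variables for the product Laplacian, the observation that products of eigenfunctions are eigenfunctions, and completeness of the product family $\{(\phi^{N_1}_i\circ\pi_1)(\phi^{N_2}_a\circ\pi_2)\}$ in $L^2(N)$ via the identification $L^2(N_1\times N_2)\cong L^2(N_1)\,\widehat{\otimes}\,L^2(N_2)$, followed by the orthogonality comparison of the two spectral decompositions. Nothing further is needed.
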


\ \\{\em Proof of Theorem \ref{thm-first}.}
For a smooth function $\varphi$ on $f$, the Laplacian  $\Delta_{f}$ of $\varphi$ is given by
\begin{equation}\label{eq-Lap}
  \Delta^{f}(\varphi)=\frac{n}{n_1} \Delta^{f_1}\varphi+ \frac{n}{\hat n_1}\Delta^{\hat f_1}\varphi.
\end{equation}
By Proposition \ref{prop-spec}, taking into account the scaling in \eqref{eq-Lap}, we see that
\begin{equation}\label{eq-la1}
	\lambda_1^f=min\left\{\frac{n}{n_1}\lambda_1^{f_1},  \frac{n}{\hat n_1}\lambda_1^{\hat{f}_{1}}\right\}.
\end{equation}
If $\lambda_1^{f_1}= n_1$ and
 $\lambda_1^{\hat{f}_{1}}=\hat n_1$, we obtain immediately that $\lambda_1^{f}=n$.
 Conversely,  if $\lambda_1^{f}=n$, then both $\frac{n}{n_1} \lambda_1^{f_1}\geq n$ and  $\frac{n}{\hat n_1}\lambda_1^{\hat{f}_{1}}\geq n$. Since
$ \lambda_1^{f_1}\leq n_1$ and
 $\lambda_1^{\hat{f}_{1}}\leq \hat n_1$, we obtain $ \lambda_1^{f_1}= n_1$ and
 $\lambda_1^{\hat{f}_{1}}=\hat n_1$.
\hfill$\Box$

This yields immediately a simple proof of the following   well-known fact.
\begin{corollary}\label{cor-CL} For all $n_j\in \mathbb Z^+$, the Clifford minimal submanifold
\begin{equation}\label{eq-CT-1}
f:=\left(\sqrt{\frac{n_1}{n}}f_1,\cdots,\sqrt{\frac{n_k}{n}}f_k\right):M^n=S^{n_1}\times\cdots\times S^{n_k}\rightarrow S^{n+k-1},\ n=\sum_{j=1}^kn_j,
\end{equation}
is a  minimal submanifold immersed by its first eigenfunctions and with flat normal bundle. Here
$f_j:S^{n_j}\rightarrow S^{n_j}$ denotes the standard $n_j-$sphere for   $j=1,\cdots,k.$
\end{corollary}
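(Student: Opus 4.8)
\emph{Proof proposal for Corollary \ref{cor-CL}.} The natural approach is induction on the number $k$ of factors, peeling off one spherical factor at each step and invoking the structural results of Section 2 together with Theorem \ref{thm-first}. For $k=1$ the map $f$ is the identity $S^{n_1}\to S^{n_1}$: the round sphere is trivially a minimal isometric immersion of itself, its first eigenvalue is the classical value $\lambda_1=n_1$ (the first eigenfunctions being the restrictions of the linear coordinate functions of $\R^{n_1+1}$), and its normal bundle has rank $0$, hence is flat. So the assertion holds in this case.

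For the inductive step, assume $k\ge 2$ and that the statement holds for $k-1$ factors. Set $n'=n_2+\cdots+n_k$ and let
\[\hat f_1:=\left(\sqrt{\tfrac{n_2}{n'}}\,f_2,\cdots,\sqrt{\tfrac{n_k}{n'}}\,f_k\right):S^{n_2}\times\cdots\times S^{n_k}\longrightarrow S^{n'+k-2}\]
be the $(k-1)$-fold Clifford minimal submanifold; by the inductive hypothesis it is minimal, immersed by its first eigenfunctions (so $\lambda_1^{\hat f_1}=\hat n_1$ with $\hat n_1:=n'$), and has flat normal bundle. Since $\sqrt{n'/n}\cdot\sqrt{n_j/n'}=\sqrt{n_j/n}$ for every $j\ge 2$, the $k$-fold submanifold \eqref{eq-CT-1} can be rewritten as the two-factor product $f=(c_1 f_1,\hat c_1\hat f_1)$ with $c_1=\sqrt{n_1/n}$ and $\hat c_1=\sqrt{n'/n}=\sqrt{\hat n_1/n}$; note $c_1^2+\hat c_1^2=1$ and $c_1\hat c_1\ne 0$, so $f$ is genuinely a product in the sense of Section 2, with the balanced constants.

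The three conclusions now follow by applying, in order: the minimality criterion of Tang--Zhang \cite{TZ} and Choe--Hoppe \cite{CH} recalled above (both $f_1$ and $\hat f_1$ are minimal and the constants are the balanced ones, hence $f$ is minimal); Theorem \ref{thm-first} (since $\lambda_1^{f_1}=n_1$ and $\lambda_1^{\hat f_1}=\hat n_1$, we obtain $\lambda_1^f=n$, i.e.\ $f$ is immersed by its first eigenfunctions); and the criterion for flat normal bundle above (both $f_1$, whose normal bundle has rank $0$, and $\hat f_1$, by the inductive hypothesis, have flat normal bundle, hence so does $f$). This closes the induction.

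I expect no serious obstacle here: each of the three properties is inherited through a two-factor product by a result already in hand, so the corollary is essentially a bookkeeping exercise. The only point deserving a little care is the rescaling identity $\sqrt{n'/n}\,\sqrt{n_j/n'}=\sqrt{n_j/n}$, which is exactly what guarantees that grouping the last $k-1$ factors into a single minimal submanifold and then forming the two-factor product recovers the Clifford submanifold \eqref{eq-CT-1} with the correct coefficients.
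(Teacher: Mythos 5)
Your proof is correct and is essentially the argument the paper has in mind: the paper states Corollary \ref{cor-CL} as an immediate consequence of Theorem \ref{thm-first} (together with the minimality and flat-normal-bundle criteria for two-factor products from Section 2), and your inductive peeling-off of one spherical factor at a time, using the rescaling identity $\sqrt{n'/n}\,\sqrt{n_j/n'}=\sqrt{n_j/n}$ to recover the two-factor product structure, is exactly the bookkeeping that makes this ``immediate'' deduction precise.
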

 \begin{example}
Theorem \ref{thm-first} allows us to construct many new examples of minimal submanifolds immersed by their first eigenfunctions,  with different kinds of topology. Let $f_1:S^{n_1}\rightarrow S^{n_1}\subset\R^{n_1+1}$ denotes the standard $n_1-$sphere.
 \begin{enumerate}
 \item Recall that  the minimal surface  $\tilde \tau_{3,1}$, the bipolar surface of Lawson's $\tau_{3,1}$, is a  minimal Klein bottle in $S^4$ immersed by its  first eigenfunctions \cite{Lawson,EGJ}.  Set $\hat f_1=\tilde \tau_{3,1}$. We obtain that
$f=(c_1f_1,\hat c_1\hat f_1): S^{n_1}\times\mathbb K\rightarrow S^{n_1+5} $ is an $n_1+2$ dimensional non-orientable minimal submanifold in  $S^{n_1+5}$  immersed by its  first eigenfunctions.
\item In \cite{CS}, it is proved that the Lawson minimal surface $\xi_{m,k}$ \cite{Lawson} is immersed in $S^3$ by its  first eigenfunctions. Set $\hat f_1=\xi_{m,k}$. We obtain that
$f=(c_1f_1,\hat c_1\hat f_1): S^{  n_1}\times\hat M_1\rightarrow S^{  n_1+4}$ is an $ n_1+2$ dimensional minimal submanifold  immersed by its  first eigenfunctions and with flat normal bundle. Here $\hat M_1$ is the Riemann surface $\xi_{m,k}$ of genus $mk$.
\item  It is proved in \cite{TY2} that all minimal isoparametric hypersurfaces are immersed by their first eigenfunctions. Let $\hat f_1:\hat M_1\rightarrow S^{  \hat n_1+1}$  be a $\hat n_1-$dimensional  minimal isoparametric hypersurface. Then $f=(c_1f_1,\hat c_1\hat f_1): S^{  n_1}\times\hat M_1\rightarrow S^{  n_1+\hat{n}_1+2}$ is an $(n_1+\hat n_1)$ dimensional  minimal submanifold  immersed by its  first eigenfunctions and with flat normal bundle.
 \end{enumerate}
 \end{example}

\subsection{On a problem of Tang and Yan}

In \cite{TY2}, Tang and Yan discussed briefly the influence of the co-dimension on the submanifold version of Yau's conjecture. They asked the following problem (See page 526 of \cite{TY2}):
\begin{problem} of \cite{TY2}: Let $M^d$ be an oriented, embedded, closed minimal submanifold in the unit sphere
$S^{n+1}$ with $d\geq \frac{2}{3}n+1$.
Is it true that $\lambda_1(M^d)=d$?
\end{problem}

For the discussion of  above problem, we need the following result on the first eigenvalue of focal submanifold of OT-FKM type. We refer to \cite{TY2} and reference therein for the definition of focal submanifold of OT-FKM type and more details.
\begin{proposition}\label{prop-tz} $($Proposition 1.1 of \cite{TY2}$)$ Let $M_2\subset S^{2k+3}$ be the focal submanifold of OT-FKM type
defined before with $(m_1, m_2) = (1, k)$. The following equality is valid:
\[\lambda_1(M_2) = \min\{4, 2+k\}.\]
Recall that $M_2$ has dimension $\dim M_2=k+2$ and co-dimension $k+1$.
\end{proposition}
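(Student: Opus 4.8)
The plan is to produce the two competing upper bounds $\lambda_1(M_2)\le k+2$ and $\lambda_1(M_2)\le 4$ from explicit eigenfunctions, and then to establish the matching lower bound $\lambda_1(M_2)\ge\min\{4,k+2\}$, which is the heart of the matter. I would work with the OT-FKM model: fix a Clifford system $\{P_0,P_1\}$ of symmetric matrices on $\R^{2k+4}$ with $P_iP_j+P_jP_i=2\delta_{ij}I$ (so $\operatorname{tr}P_i=0$ and $P_0P_1$ is skew with $(P_0P_1)^2=-I$), and realize $M_2$ as the focal set $\{x\in S^{2k+3}: u_0(x)^2+u_1(x)^2=1\}$, where $u_i(x):=\la P_ix,x\ra$. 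As a focal submanifold of an isoparametric family, $M_2$ is a minimal submanifold of $S^{2k+3}$ of dimension $k+2$ with constant principal curvatures in each normal direction (M\"unzner), which I would quote rather than reprove.

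For the first upper bound: since $M_2$ is minimal of dimension $k+2$ in $S^{2k+3}$, Takahashi's theorem gives $\Delta^{M_2}x_A=-(k+2)x_A$ for the ambient coordinate functions, which are not all identically zero on $M_2$; hence $k+2\in\mathrm{Spec}(M_2)$ and $\lambda_1(M_2)\le k+2$. For the second, consider the Killing field $V_x:=P_0P_1x$, which is tangent to $M_2$ and whose orbits $t\mapsto e^{tP_0P_1}x$ are closed great circles of $S^{2k+3}$ of length $2\pi$ lying inside $M_2$; a short computation shows that under this flow the map $(u_0,u_1):M_2\to S^1$ undergoes the rotation by angle $-2t$, so each $u_j$ restricts to a fixed multiple of $\cos(2t)$, respectively $\sin(2t)$, on every orbit. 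Using that the $u_j$ are restrictions of degree-two spherical harmonics (harmonic on $\R^{2k+4}$ since $\operatorname{tr}P_i=0$) together with the explicit normal bundle of the focal manifold, one computes $\Delta^{M_2}u_j=-4u_j$; equivalently, the ``vertical'' part of $\Delta^{M_2}$ along the great-circle orbits already contributes the eigenvalue $4$ and the ``horizontal'' part annihilates $u_j$. Hence $4\in\mathrm{Spec}(M_2)$ and $\lambda_1(M_2)\le\min\{4,k+2\}$.

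The main obstacle is the reverse inequality $\lambda_1(M_2)\ge\min\{4,k+2\}$, i.e. excluding eigenvalues in $(0,\min\{4,k+2\})$, and here I would use that this particular family is homogeneous: the $(1,k)$ isoparametric foliation is the one associated with the isotropy representation of the oriented real Grassmannian $\widetilde{G}_2(\R^{k+4})=SO(k+4)/(SO(2)\times SO(k+2))$, so $M_2$ is an orbit of $K=SO(2)\times SO(k+2)$, i.e. $M_2=K/K_0$ for an explicit isotropy subgroup $K_0$ of dimension $\binom{k+1}{2}$. Then $L^2(M_2)=\bigoplus_\pi V_\pi^{\oplus\dim V_\pi^{K_0}}$ by Frobenius reciprocity, $\Delta^{M_2}$ acts on each block by the Casimir eigenvalue $c_\pi$ normalized by the metric induced from $S^{2k+3}$, and the problem becomes the representation-theoretic one of checking that the smallest positive $c_\pi$ over the $\pi$ with $V_\pi^{K_0}\ne 0$ equals $\min\{4,k+2\}$ — with the coordinate functions and the $u_j$ above pinning down which representations realize the extremes. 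An alternative, representation-free route exploits the Killing field $V$ directly: its flow makes $M_2\to B:=M_2/S^1$ a Riemannian submersion with totally geodesic circle fibers, so $L^2(M_2)$ splits into $S^1$-weight spaces, each identified with sections of a power of the associated line bundle over the $(k+1)$-manifold $B$; estimating the bottom of the spectrum in each weight (the weight-zero part is just $\mathrm{Spec}(B)$, and one identifies $B$ as a round $S^{k+1}$ up to a finite quotient) would give the bound. Either way, once the lower bound holds one checks that for $k\le 2$ one has $4\ge k+2$, so $\lambda_1(M_2)=k+2$, whereas for $k\ge 3$ one gets $\lambda_1(M_2)=4<k+2$ — and it is precisely this last case that furnishes an embedded minimal submanifold \emph{not} immersed by its first eigenfunctions, as needed for the application to the problem of Tang and Yan.
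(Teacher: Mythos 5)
The paper does not supply a proof of this statement: it is quoted verbatim as Proposition~1.1 of \cite{TY2} and used as a black box, so there is no in-paper argument to compare with, and what you are proposing is a reconstruction of the Tang--Yan computation. Read on its own terms, your sketch identifies the right objects but has two genuine gaps. First, the upper bound $\lambda_1(M_2)\le 4$ rests on the claim $\Delta^{M_2}u_j=-4u_j$. For a quadratic form $F=\langle Ax,x\rangle$ restricted to a minimal $M^d\subset S^{N-1}\subset\R^N$, the general formula $\Delta^M F=2\,\mathrm{tr}_{T_xM}A-2d\,F$ reduces this to the pointwise identity $\mathrm{tr}_{T_xM_2}P_j=k\,u_j(x)$, which requires the explicit tangent/normal decomposition of $M_2$ (using that $x$ is a $+1$-eigenvector of $u_0P_0+u_1P_1$ on $M_2$) and is asserted rather than verified. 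Second, and decisively, the matching lower bound $\lambda_1(M_2)\ge\min\{4,k+2\}$ --- which you rightly call the heart of the matter --- is only outlined in two alternative directions, neither carried to completion: the Casimir/Frobenius bookkeeping for $M_2$ as a $K=SO(2)\times SO(k+2)$-orbit with the metric induced from $S^{2k+3}$ is not done, and in the submersion picture both the identification of the base $B=M_2/S^1$ with (a quotient of) a round $S^{k+1}$ and the spectral estimate in the nonzero $S^1$-weight sectors are conjectural. Since the lower bound is precisely what makes the result nontrivial, the sketch as written does not yield a proof; for the purposes of the present paper the appropriate move is to cite \cite{TY2}, exactly as is done here.
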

The following examples give some partial answers to the above problem.
\begin{example}\
 \begin{enumerate}
 \item Let $f_1$ be a  focal submanifold of OT-FKM type in Proposition 1.1 of \cite{TY2} (Proposition \ref{prop-tz}) and let  $\hat f_1:S^m\rightarrow S^m$ be the totally geodesic map. Then by Proposition \ref{prop-tz} and  Theorem \ref{thm-first}, the minimal product
     \[  f=\left(\sqrt{\frac{k+2}{m+k+2}}f_1,\sqrt{\frac{m}{m+k+2}}\hat f_1\right):M_2\times S^m\rightarrow S^{m+2k+4}\]
     is an  embedded minimal submanifold {\em not } immersed by its first eigenfunctions of dimension $m+k+2$ and codimension $k+2$ when $k\geq 3$. In particular, when $k$ is fixed, the ratio $\frac{m+k+2}{m+2k+4}\rightarrow 1$ when $m\rightarrow+\infty$. Note that in this case the lowest co-dimension is $5$ since  $k$ is chosen to be $3$.
 \item Set $k\in \mathbb Z^+$, $k\geq2$. Let $T^2=\R^2/\Lambda=\mathbb C/\Lambda$ with $\Lambda=\{2\pi \mathbb Z\oplus \left(1+i\sqrt{4k^2-1}\right)\pi \mathbb Z\}$ and $z=u+iv$.  Then $f_1:T^2\rightarrow S^5\subset\mathbb C^3$,
 \begin{equation}\label{eq-t2}
f_1(u,v)=\left(\sqrt{\frac{2k^2-1}{4k^2-1}}e^{i\frac{2kv}{\sqrt{4k^2-1}}},\ \sqrt{\frac{k^2}{4k^2-1}}e^{i\left(u-\frac{v}{\sqrt{4k^2-1}}\right)},\ \sqrt{\frac{k^2}{4k^2-1}}e^{i\left( u+\frac{v}{\sqrt{4k^2-1}}\right)}\right),
 \end{equation}
 is an embedded flat minimal torus in $S^5$ (see for example \cite{Bryant,Ken} for the geometry of flat minimal tori in $S^n$), not immersed by its first eigenfunctions. In fact, for any $1\leq l<k$, $\frac{2l^2}{k^2}$ is an eigenvalue of $f_1$ with two linearly independent eigenfunctions $\cos\frac{2lv}{\sqrt{4k^2-1}}$ and $\sin\frac{2lv}{\sqrt{4k^2-1}}$.  Let $\hat f_1:S^m\rightarrow S^m$ be the totally geodesic map as above. Then by Theorem \ref{thm-first}, the minimal product
\[  f=\left(\sqrt{\frac{2}{m+2}}f_1,\sqrt{\frac{m}{m+2}}\hat f_1\right):T^2\times S^m\rightarrow S^{m+6}\]
is an  embedded minimal submanifold of dimension $m+2$ and co-dimension $4$, not immersed by its first eigenfunctions.
  \end{enumerate}
\end{example}

In view of  above examples, one may ask the revised version of Tang-Yan's problem \cite{TY2}.
\begin{problem}\
 { \begin{enumerate}
 \item  Let $M^d$ be an oriented, embedded closed minimal submanifold in the unit sphere
$S^{d+2}$. Is it true that $\lambda_1(M^d)=d$?
 \item  Let $M^d$ be an oriented, embedded closed minimal submanifold in the unit sphere
$S^{d+3}$ with $d\geq 7$. Is it true that $\lambda_1(M^d)=d$? What if $3\leq d< 7$?
  \end{enumerate}
  }
\end{problem}

\section{Morse index $\&$ Nullity of minimal product}
 In this section, we will obtain a lower bound estimate of $f$ in terms of the index of $f_1$ and $\hat f_1$ and some spectral data of them.  We will assume that the manifolds discussed in this section are  closed and oriented.

\subsection{First and second variation of volume, index and nullity}  Suppose $f_t$ is a variation of $f:M^n\rightarrow S^{n+p}$ with $\frac{\partial}{\partial t}(f_t)|_{t=0}=V$ being a section of the normal bundle of $f$, and let $A(t)$ denote the volume of $f_t$.
If   $f$ is a  minimal submanifold $f:M^n\rightarrow S^{n+p}$, it is well known \cite{Simons} that $A'(0)=0$ and
\begin{equation}\label{index-form} Q(V,V)=A''(0)=-\int_M\langle\Delta^{\perp,f}V+nV+\tilde{\mathcal{A}}^f(V),V\rangle=-\int_M\langle \mathcal{L}^f(V), V\rangle\dd A,
\end{equation}
where  we denote by $\Delta^{\perp,f}$ the Laplace operator of the normal bundle $T^\perp M$, and by
\begin{equation} \mathcal{L}^f:=\Delta^{\perp,f}+n+\tilde{\mathcal{A}}^f
\end{equation}
 the {\em Jacobi operator}, acting on sections of the normal bundle. Recall that
$\tilde{\mathcal{A}}^f:\Gamma(T^\perp M)\rightarrow \Gamma(T^\perp M)$ is defined as follow  ( here $A^f$ is the shape operator of $f$):
\[\langle\tilde{\mathcal{A}}^f(V),W\rangle=\langle A^f_V,A^f_W\rangle\]
for all sections $V, W$ of the normal bundle.
It is well-known \cite{Simons} that the Jacobi operator $\mathcal L^f$ is a strongly elliptic, self-adjoint operator acting on sections of  $T^\perp M$, with a discrete spectrum of real eigenvalues bounded from below, and of having finite-dimensional eigenspaces w.r.t. every eigenvalue.  Similar to the spectrum of the Laplacian, we will denote the spectrum of $\mathcal L^f$ by
 \[Spec^{\mathcal L^f}:=\left\{
\mu^{f}_1\leq \mu^{f}_{2}\leq \cdots\rightarrow+\infty \right\},\]
with $ \mu^{f}_{\alpha}$ satisfying
\[
\mathcal  L^f(V^f_{\alpha})+\mu^{f}_{\alpha}V^f_{\alpha}=0
\hbox{ for some } V^f_{\alpha}\in\Gamma(T^{\perp}M)\setminus\{0\},\alpha\in\mathbb Z^+.\]
We denote  by
\[\mathfrak{E}_{\mu^{f}_{\alpha}}:=Span_{\R}\left\{V^f_{\alpha}|\mathcal L^f(V^f_{\alpha})+\mu^{f}_{\alpha}V^f_{\alpha}=0\right\}\]
the eigenspace w.r.t. $\mu^{f}_{\alpha}$.

\begin{definition}\
	\begin{enumerate}
		\item The index $Ind(f)$ of the minimal immersion $f$ is defined to be the (Morse) index of the quadratic form $Q$ \eqref{index-form} associated with the second-variation of $f$: it is the maximal dimension of a subspace of normal sections on which $Q$ is negative definite, i.e.,

\begin{equation}\label{eq-def1}
Ind(f)=\sum_{\mu^{f}_{\alpha}<0}\dim \mathfrak{E}_{\mu^{f}_{\alpha}}.
\end{equation}
\item The nullity $Null(f)$ of $f$ is defined to be the dimension of the kernel ($0$-eigenspace) of $\mathcal{L}^f$, i.e.
\begin{equation}\label{eq-def1}Null(f):=\dim\mathfrak{E}_{\mu^{f}_{\alpha}}|_{\mu^{f}_{\alpha}=0}.
\end{equation}
Eigensections of the $0$-eigenspaces are called the Jacobi fields of $f$.
	\end{enumerate}
\end{definition}
Note that Killing fields of $f$ are Jacobi fields, which leads to a lower bound estimate of the nullity of $f$ (See   \cite{Simons}  for more details). Moreover, $f$ is called {\em non-degenerate} if all Jacobi fields of $f$ are Killing fields.

\subsection{Estimates of Index and Nullity for minimal products}

\subsubsection{The main results}
\begin{theorem}\label{thm-main1} Assume that $f$ is a minimal product of two oriented, closed minimal submanifolds $f_1$ and $\hat{f}_1$, with both $f_1$ and $\hat{f}_1$  being full  {and not totally geodesic}. Then we have
 		\begin{equation}\label{eq-ind2}
		Ind(f)\geq Ind(f_1)+Ind(\hat f_1)+n+p+2,
		\end{equation}
 and
		\begin{equation}\label{eq-null2}
		Null(f)\geq Null(f_1)+Null(\hat f_1)+3(n_1+p_1+1) (\hat{n}_1+\hat{p}_1+1).
		\end{equation}

\end{theorem}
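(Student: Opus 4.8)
The plan is to decompose the space of normal sections of $f$ along the product structure and exhibit an explicit family of test sections on which the index form $Q$ is negative, respectively on which the Jacobi operator $\mathcal L^f$ vanishes. Recall from Section 2 that the normal bundle $T^\perp M$ of $f$ splits (locally in the $\sigma_i,\tau_j$ factors, globally for $\eta$) into three pieces: sections of the form $(\sigma,0)$ with $\sigma$ a normal field of $f_1$, sections $(0,\tau)$ with $\tau$ a normal field of $\hat f_1$, and multiples $\varphi\,\eta$ of the global unit normal $\eta$, where $\varphi\in C^\infty(M)$. The key is to compute how $\mathcal L^f$ acts on each piece using \eqref{eq-A1}--\eqref{eq-A-eta}, \eqref{eq-eta}, \eqref{eq-nabla}, and the Laplacian scaling \eqref{eq-Lap}.

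First I would treat the ``mixed-normal'' sections of the form $(c_1^{-1}\sigma,0)$ where $\sigma\in\Gamma(T^\perp M_1)$, pulled back to $M=M_1\times\hat M_1$, possibly multiplied by an eigenfunction of $\Delta^{\hat f_1}$. Using \eqref{eq-nabla} the normal Laplacian $\Delta^{\perp,f}$ restricted to such sections equals $\tfrac{n}{n_1}\Delta^{\perp,f_1}+\tfrac{n}{\hat n_1}\Delta^{\hat f_1}$ on the scalar factor; using \eqref{eq-A1} together with the block form, $\tilde{\mathcal A}^f$ acts as $\tfrac{n}{n_1}\tilde{\mathcal A}^{f_1}$. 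Hence if $V_1$ realizes a negative eigenvalue of $\mathcal L^{f_1}$, then $\tfrac{n}{n_1}V_1$ (constant in the $\hat M_1$ direction) satisfies $\langle\mathcal L^f(\tfrac{n}{n_1}V_1),\tfrac{n}{n_1}V_1\rangle = \tfrac{n}{n_1}\langle\mathcal L^{f_1}V_1,V_1\rangle<0$ — roughly, $Q^f$ on these sections is $\tfrac{n}{n_1}$ times $Q^{f_1}$ plus a nonnegative $\Delta^{\hat f_1}$-term. This yields an $\mathrm{Ind}(f_1)$-dimensional negative subspace, and symmetrically $\mathrm{Ind}(\hat f_1)$ more, and the two families are $L^2$-orthogonal since they live in different summands of $T^\perp M$; likewise Jacobi fields of $f_1$ and of $\hat f_1$ lift to Jacobi fields of $f$, giving the $\mathrm{Null}(f_1)+\mathrm{Null}(\hat f_1)$ contribution to \eqref{eq-null2}. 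The fullness and non-totally-geodesic hypotheses ensure these lifts are genuinely distinct from the new $\eta$-directions below and that $S_1,\hat S_1$ are positive.

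Next, the crucial new sections are $\varphi\,\eta$. By \eqref{eq-eta}, $\Delta^{\perp,f}(\varphi\eta)=(\Delta^f\varphi)\eta$, and by \eqref{eq-A-eta} one computes $|A^f_\eta|^2 = n_1\tfrac{\hat c_1^2}{c_1^2}+\hat n_1\tfrac{c_1^2}{\hat c_1^2}$, which with $(c_1^2,\hat c_1^2)=(n_1/n,\hat n_1/n)$ equals $n_1\cdot\tfrac{\hat n_1}{n_1}+\hat n_1\cdot\tfrac{n_1}{\hat n_1}=n$; so $\tilde{\mathcal A}^f(\varphi\eta)=n\,\varphi\eta$ and therefore $\mathcal L^f(\varphi\eta)=(\Delta^f\varphi + 2n\varphi)\eta$. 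Consequently $\varphi\eta$ is a negative direction for $Q$ whenever $\Delta^f\varphi+\lambda\varphi=0$ with $\lambda<2n$, and a Jacobi field exactly when $\lambda=2n$. Using Proposition \ref{prop-spec} with the scaling \eqref{eq-Lap}, the eigenvalues of $\Delta^f$ are $\tfrac{n}{n_1}\lambda^{f_1}+\tfrac{n}{\hat n_1}\lambda^{\hat f_1}$. The value $\lambda=0$ ($\varphi\equiv1$) gives one negative direction; more importantly, since $f_1$ is minimal its coordinate functions give $n_1+p_1+1$ independent eigenfunctions with $\lambda^{f_1}=n_1$, hence eigenfunctions of $\Delta^f$ with eigenvalue $n$ (from the $f_1$-coordinates, $\hat f_1$-part constant) — these are $n_1+p_1+1$ further negative directions since $n<2n$; symmetrically $\hat n_1+\hat p_1+1$ more from the $\hat f_1$-coordinates, totalling $1+(n_1+p_1+1)+(\hat n_1+\hat p_1+1)=n+p+2$ new negative $\eta$-directions, which are $L^2$-orthogonal to the two mixed families because $\eta$ is pointwise orthogonal to $(\sigma,0)$ and $(0,\tau)$. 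This establishes \eqref{eq-ind2}. For the nullity: the $\eta$-Jacobi fields correspond to $\Delta^f\varphi=2n\varphi$, i.e. $\tfrac{n}{n_1}\lambda^{f_1}+\tfrac{n}{\hat n_1}\lambda^{\hat f_1}=2n$; taking $\lambda^{f_1}=n_1$ (coordinates of $f_1$, an $(n_1+p_1+1)$-dimensional space, which exist and are eigenfunctions by Takahashi since $f_1$ is minimal) and $\lambda^{\hat f_1}=\hat n_1$ (coordinates of $\hat f_1$, dimension $\hat n_1+\hat p_1+1$) gives, via the product eigenspace formula in Proposition \ref{prop-spec}, a space of dimension $(n_1+p_1+1)(\hat n_1+\hat p_1+1)$ of such $\varphi$. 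But I also get Jacobi fields of the \emph{mixed} type by pairing a Jacobi field of $f_1$ in the $(\sigma,0)$-slot — no: more precisely, sections $(c_1^{-1}\sigma,0)\cdot(\phi\circ\pi_2)$ with $\sigma$ a coordinate field and $\phi$ a coordinate eigenfunction, and the symmetric construction, produce two further $(n_1+p_1+1)(\hat n_1+\hat p_1+1)$-dimensional families of Jacobi sections, $L^2$-orthogonal to each other and to the $\eta$-family. Summing the three contributions gives the term $3(n_1+p_1+1)(\hat n_1+\hat p_1+1)$ in \eqref{eq-null2}.

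The main obstacle I anticipate is bookkeeping rather than conceptual: one must verify that the three families of new Jacobi/negative sections (the two ``cross terms'' built from a coordinate normal field of one factor times a coordinate function of the other, and the $\varphi\eta$ family) are mutually $L^2$-orthogonal and also independent of the lifted index/nullity contributions from $f_1,\hat f_1$ — this is where fullness (so that the coordinate functions of $f_i$ are genuinely the first-eigenvalue eigenfunctions spanning an $(n_i+p_i+1)$-dimensional space, not a smaller one) and not-totally-geodesic (so that $\tilde{\mathcal A}^{f_i}\ne 0$ and the lifted $\mathrm{Ind}(f_i)$-spaces don't collapse into the new directions) are used. A secondary point requiring care is checking that $\mathcal L^f$ genuinely decouples across the three summands of $T^\perp M$ — i.e.\ that $\mathcal L^f$ preserves each summand — which follows from the block-diagonal structure of $A^f$ in \eqref{eq-A1}--\eqref{eq-A-eta} together with $\nabla^\perp\eta\equiv0$ and \eqref{eq-nabla}, but should be stated cleanly before the counting argument. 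Once the decoupling and orthogonality are in place, the eigenvalue estimates $\lambda<2n$ for coordinate functions (since $\lambda=0$ or $\lambda=n$) and the dimension counts from Proposition \ref{prop-spec} are immediate.
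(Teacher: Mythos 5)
Your proposal follows essentially the same route as the paper: decompose $T^\perp M$ into the $\eta$-line plus the two lifted normal bundles, show $\mathcal L^f$ preserves this splitting and acts by the expected scaled operators on each piece (the paper's Lemma \ref{lemma-m}), and then count explicit negative directions and Jacobi sections built from the constant function, coordinate eigenfunctions, and the $\mathbf{a}^{\perp}$-type normal sections. The one step you assert without argument is the dimension bound $\dim\mathfrak{E}_{\mu^{f_1}=-n_1}\geq n_1+p_1+1$ for the ``coordinate field'' sections when $f_1$ is not totally geodesic: this is exactly the paper's Lemma \ref{le-per} (proved in Appendix A, by showing $\mathbf{a}^\perp\equiv 0$ forces total geodesy), and it must be stated and proved or cited, since without it your mixed-Jacobi families are not known to have the claimed dimension.
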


It comes from the following computations  of index and nullity of $f$.
 \begin{theorem}\label{thm-main}  Assume that $f$ is a minimal product of two oriented, closed minimal submanifolds   $f_1$ and $\hat{f}_1$.

 \begin{enumerate}
 \item Let   $Ind(f)$,  $Ind(f_1)$ and $Ind(\hat f_1)$ denote the Morse index of $f$, $f_1$ and $\hat{f}_1$ respectively. Then
\begin{equation}\label{eq-ind}
  Ind(f)=Ind(f_1)+Ind(\hat f_1)+1+\mathfrak{I}_0+\mathfrak{I}_1+\hat{\mathfrak{I}}_1,
\end{equation}
where
\begin{equation}\label{eq-NN}
\begin{split}
\mathfrak{I}_0:=&\sharp\left\{\left( {\alpha}, {\beta}\right)\left|\frac{\lambda^{f_1}_{\alpha}}{n_1}
+\frac{\lambda^{\hat f_1}_{\beta}}{\hat{n}_1}<2,~~ \alpha\geq1,~\beta\geq1 \right.\right\}\\&+\sharp\left\{\ {\alpha}\left|\frac{\lambda^{f_1}_{\alpha}}{n_1}<2,~~\alpha\geq1 \right.\right\}+\sharp\left\{  {\beta} \left|\frac{\lambda^{\hat{f}_1}_{\beta}} {\hat{n}_1}<2,~~\beta\geq1  \right.\right\},
\end{split}\end{equation}
and
\begin{equation}\label{eq-NN1}
\mathfrak{I}_1=
\sharp\left\{\left( {\gamma}, {\beta}\right)\left|\frac{\mu^{f_1}_{\gamma}}{n_1}+\frac{\lambda^{\hat{f}_1}_{\beta}}{\hat{n}_1}<0
,~~\beta\geq1 \right.\right\}
,\
\hat{\mathfrak{I}}_1=\sharp\left\{\left( {\gamma}, {\alpha}\right)
\left|\frac{\mu^{\hat{f}_1}_{\gamma}}{\hat{n}_1}+\frac{\lambda^{f_1}_{\alpha}}{n_1}<0,~~\alpha\geq1 \right.\right\},
\end{equation}
with
\begin{equation}\label{eq-NN2}
\mu^{f_1}_{\gamma}\in  Spec ^{\mathcal L^{f_1}},\ \mu^{\hat f_1}_{\gamma}\in  Spec^{\mathcal L^{\hat f_1}},\ \lambda^{f_1}_{\alpha}\in Spec_{f_1} \setminus\{0\}, \ \lambda^{\hat{f}_1}_{\beta}\in Spec_{\hat{f}_1}\setminus\{0\}.
\end{equation}

In particular
\begin{equation}\label{eq-eigen}
  \mu^{f}_{1}=\min\left\{-2n,\frac{n}{n_1}\mu^{f_1}_1,\frac{n}{\hat{n}_1}\mu^{\hat{f}_1}_1\right\}\leq -2n.
\end{equation}
\item Let   $Null(f)$,  $Null(f_1)$ and $Null(\hat f_1)$ denote the nullity of $f$, $f_1$ and $\hat{f}_1$ respectively. Then
\begin{equation}\label{eq-null}
  Null(f)=Null(f_1)+Null(\hat f_1)+\mathfrak{N}_0 +\mathfrak{N}_1+\hat{\mathfrak{N}}_1,
\end{equation}
where
\begin{equation}\label{eq-NN3}
\begin{split}\mathfrak{N}_0:=&\sharp\left\{\left( {\alpha}, \beta\right)\left|\frac{\lambda^{f_1}_{\alpha}}{n_1}+\frac{\lambda^{\hat f_1}_{\beta}}{\hat{n}_1}=2 ,\alpha\geq1,\beta\geq1\right.\right\}\\
&+\sharp\left\{ {\alpha}\left|\frac{\lambda^{f_1}_{\alpha}}{n_1}=2,  \alpha\geq1 \right.\right\}+\sharp\left\{  {\beta} \left|\frac{\lambda^{\hat{f}_1}_{\beta}} {\hat{n}_1}=2, \beta\geq1\right.\right\},
\end{split}\end{equation}
and
\begin{equation}\label{eq-NN4}
\mathfrak{N}_1=\sharp\left\{\left( {\gamma}, {\beta}\right)\left|\frac{\mu^{f_1}_{\gamma}}{n_1}+\frac{\lambda^{\hat{f}_1}_{\beta}}{\hat{n}_1}=0, ~~\beta\geq1 \right.
\right\},\
\hat{\mathfrak{N}}_1=\sharp\left\{\left( {\gamma}, {\alpha}\right)\left|\frac{\mu^{\hat{f}_1}_{\gamma}}{\hat{n}_1}+\frac{\lambda^{f_1}_{\alpha}}{n_1}=0,~~\alpha\geq1 \right.
\right\},
\end{equation}
with $\{\lambda^*_{*},\ \mu^*_{*}\}$ satisfying the same conditions  as in \eqref{eq-NN2}.
\end{enumerate}
\end{theorem}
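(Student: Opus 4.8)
The plan is to diagonalize the Jacobi operator $\mathcal{L}^f=\Delta^{\perp,f}+n+\tilde{\mathcal{A}}^f$ by splitting the normal bundle of $f$ into three $\mathcal{L}^f$-invariant sub-bundles and identifying $\mathcal{L}^f$ on each of them with an explicit ``separated'' operator on the Riemannian product $M=M_1\times\hat M_1$. Write $T^{\perp}M=E\oplus\hat E\oplus L$, where $E$ is locally spanned by the $(\sigma_i,0)$ (i.e. $E=\pi_1^{\ast}T^{\perp}M_1$, the pull-back of the normal bundle of $f_1$), $\hat E=\pi_2^{\ast}T^{\perp}\hat M_1$ is spanned by the $(0,\tau_j)$, and $L=\R\eta$. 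By \eqref{eq-eta} and \eqref{eq-nabla} this decomposition is orthogonal and $\nabla^{\perp,f}$-parallel. The first point to verify is that $\tilde{\mathcal{A}}^f$ also respects it: from \eqref{eq-A1}--\eqref{eq-A-eta} the off-diagonal inner products $\langle A^f_{(\sigma_i,0)},A^f_{(0,\tau_j)}\rangle$, $\langle A^f_{(\sigma_i,0)},A^f_{\eta}\rangle$, $\langle A^f_{(0,\tau_j)},A^f_{\eta}\rangle$ all vanish, either trivially (disjoint blocks) or because they are constant multiples of $\operatorname{tr}A^{f_1}_{\sigma_i}$ or $\operatorname{tr}A^{\hat f_1}_{\tau_j}$, which are zero by minimality of $f_1,\hat f_1$. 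Hence $\mathcal{L}^f$ preserves each summand, so $Spec^{\mathcal{L}^f}$ is the union, with multiplicities, of the spectra of $\mathcal{L}^f|_E$, $\mathcal{L}^f|_{\hat E}$, $\mathcal{L}^f|_L$, and \eqref{eq-ind}--\eqref{eq-eigen} reduce to counting negative (resp. zero) eigenvalues in each piece.

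On $L$: since $\nabla^{\perp,f}\eta\equiv0$ by \eqref{eq-eta} and $\langle A^f_{\eta},A^f_{\eta}\rangle=\tfrac{n_1\hat c_1^2}{c_1^2}+\tfrac{\hat n_1 c_1^2}{\hat c_1^2}=n$ by \eqref{eq-A-eta} (with $c_1^2=n_1/n$, $\hat c_1^2=\hat n_1/n$), one gets $\mathcal{L}^f(c\,\eta)=(\Delta^f c+2nc)\,\eta$ for $c\in C^{\infty}(M)$. By \eqref{eq-Lap} and Proposition \ref{prop-spec} the spectrum of $\Delta^f$ consists of the numbers $\tfrac{n}{n_1}\lambda^{f_1}_{\alpha}+\tfrac{n}{\hat n_1}\lambda^{\hat f_1}_{\beta}$ ($\alpha,\beta\ge0$, $\lambda^{f_1}_0=\lambda^{\hat f_1}_0=0$), so $\mathcal{L}^f|_L$ has eigenvalues $\tfrac{n}{n_1}\lambda^{f_1}_{\alpha}+\tfrac{n}{\hat n_1}\lambda^{\hat f_1}_{\beta}-2n$, with $-2n$ coming from the constant function. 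The negative ones are the pair $(0,0)$, which accounts for the summand ``$1$'' in \eqref{eq-ind}, together with the three families in \eqref{eq-NN} (the cases $\alpha,\beta\ge1$; $\alpha\ge1,\beta=0$; $\alpha=0,\beta\ge1$), giving $1+\mathfrak{I}_0$; the zero ones give $\mathfrak{N}_0$; and the value $-2n$ contributes the bound in \eqref{eq-eigen}.

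On $E$: the key step is to prove $\mathcal{L}^f|_E=\tfrac{n}{n_1}\,\mathcal{L}^{f_1}\otimes\mathrm{id}+\mathrm{id}\otimes\tfrac{n}{\hat n_1}\,\Delta^{\hat f_1}$ under the identification $\Gamma(E)\cong\Gamma(T^{\perp}M_1)\,\widehat{\otimes}\,C^{\infty}(\hat M_1)$. The three ingredients: (i) by \eqref{eq-A1}, $\tilde{\mathcal{A}}^f(\sigma_i,0)=\tfrac1{c_1^2}(\tilde{\mathcal{A}}^{f_1}\sigma_i,0)=\tfrac{n}{n_1}(\tilde{\mathcal{A}}^{f_1}\sigma_i,0)$; (ii) the induced metric of $f$ is the product $c_1^2 g_1\oplus\hat c_1^2\hat g_1$, so the connection Laplacian of $(E,\nabla^{\perp,f})$ is the sum of its $M_1$-part and its $\hat M_1$-part with weights $\tfrac1{c_1^2}=\tfrac{n}{n_1}$ and $\tfrac1{\hat c_1^2}=\tfrac{n}{\hat n_1}$; (iii) by \eqref{eq-nabla} the restriction of $\nabla^{\perp,f}$ to $E$ is the $\pi_1$-pull-back of $\nabla^{\perp,f_1}$ — in particular flat along $\hat M_1$ — so its $M_1$-part acts as $\Delta^{\perp,f_1}$ on the $T^{\perp}M_1$-factor while its $\hat M_1$-part is the scalar Laplacian $\Delta^{\hat f_1}$ on coefficients. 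Combining, $\mathcal{L}^f|_E=\tfrac{n}{n_1}(\Delta^{\perp,f_1}+n_1+\tilde{\mathcal{A}}^{f_1})+\tfrac{n}{\hat n_1}\Delta^{\hat f_1}=\tfrac{n}{n_1}\mathcal{L}^{f_1}+\tfrac{n}{\hat n_1}\Delta^{\hat f_1}$. The vector-bundle analogue of Proposition \ref{prop-spec} (separation of variables, eigensections $V^{f_1}_{\gamma}\otimes\phi^{\hat f_1}_{\beta}$, completeness from $L^2(E)=L^2(T^{\perp}M_1)\,\widehat{\otimes}\,L^2(\hat M_1)$) then gives $Spec^{\mathcal{L}^f|_E}=\{\tfrac{n}{n_1}\mu^{f_1}_{\gamma}+\tfrac{n}{\hat n_1}\lambda^{\hat f_1}_{\beta}\}$. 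The entries with $\beta=0$ ($\phi$ constant) contribute $Ind(f_1)$, $Null(f_1)$ and the value $\tfrac{n}{n_1}\mu^{f_1}_1$, while those with $\beta\ge1$ contribute $\mathfrak{I}_1$ and $\mathfrak{N}_1$. The sub-bundle $\hat E$ is handled identically, producing $Ind(\hat f_1)$, $Null(\hat f_1)$, $\hat{\mathfrak{I}}_1$, $\hat{\mathfrak{N}}_1$ and $\tfrac{n}{\hat n_1}\mu^{\hat f_1}_1$. Adding the three contributions then yields \eqref{eq-ind}, \eqref{eq-null} and $\mu^f_1=\min\{-2n,\tfrac{n}{n_1}\mu^{f_1}_1,\tfrac{n}{\hat n_1}\mu^{\hat f_1}_1\}\le-2n$.

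The main obstacle is the identification of $\mathcal{L}^f|_E$ in the third step: I would have to track the conformal constants $c_1,\hat c_1$ carefully in the connection Laplacian of $E$ for the product metric, invoke minimality (as in the first step) to rule out off-diagonal blocks of $\tilde{\mathcal{A}}^f$ so that the splitting is genuinely $\mathcal{L}^f$-invariant, and justify the vector-bundle version of the product spectral decomposition — the only non-formal point being completeness of the tensor eigensections, which follows from standard elliptic theory exactly as for Proposition \ref{prop-spec}. Everything after that is the bookkeeping that matches the counting sets $\mathfrak{I}_{\ast}$, $\hat{\mathfrak{I}}_{\ast}$, $\mathfrak{N}_{\ast}$, $\hat{\mathfrak{N}}_{\ast}$.
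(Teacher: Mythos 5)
Your proposal is correct and follows essentially the same route as the paper: the same three-fold $\mathcal{L}^f$-invariant splitting of $T^{\perp}M$ (the paper's $T^{\perp,0}M\oplus T^{\perp,1}M\oplus T^{\perp,2}M$ is your $L\oplus E\oplus\hat E$), the same identification $\mathcal{L}^f(v_0\eta)=(\Delta^f v_0+2nv_0)\eta$ and $\mathcal{L}^f|_E=\tfrac{n}{n_1}\mathcal{L}^{f_1}+\tfrac{n}{\hat n_1}\Delta^{\hat f_1}$ (these are exactly \eqref{eq-LLL0}--\eqref{eq-LLL2}), and the same separation-of-variables argument using $L^2$-completeness of eigensections (the paper cites Chapter III, Theorem 5.8 of \cite{LM} for the bundle case) followed by the bookkeeping that produces $\mathfrak{I}_{\ast},\hat{\mathfrak{I}}_{\ast},\mathfrak{N}_{\ast},\hat{\mathfrak{N}}_{\ast}$.
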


To  prove Theorem \ref{thm-main}, we need some technical lemmas.
\subsubsection{Technical lemmas}

For later computations it is natural to decompose $T^\perp M$ as
\begin{equation}\label{eq-decomp}
T^\perp M=T^{\perp,0} M\oplus T^{\perp,1} M\oplus T^{\perp,2} M\end{equation} with
 \[\begin{split}T^{\perp,0}M&=\amalg_{p\in M}T^{\perp,0}_p M=\amalg_{p\in M}\hbox{Span}_{\R}\{\eta_p\}, \\
T^{\perp,1}M&=\amalg_{p\in M}T^{\perp,1} M=\amalg_{p\in M}\hbox{Span}_{\R}\left\{\left(\sigma_i(p_1),0\right)|1\leq i\leq n_1\right\},\\
T^{\perp,2}M&=\amalg_{p\in M}T^{\perp,2}_p M=\amalg_{p\in M}\hbox{Span}_{\R}\left\{\left(0,\tau_{\hat j}(\hat p_1)\right)|1\leq \hat j\leq \hat n_1\right\}
\end{split}\]
for  any $p=(p_1,\hat p_1)\in M=M_1\times M_2$. The first lemma is
\begin{lemma}\label{lemma-m}
 {Let $\Lf$, $\Lfa$ and $\Lfb$ be the Jacobi operators of $f$, $f_1$ and $\hat f_1$ respectively.
\begin{enumerate}
\item The  decomposition \eqref{eq-decomp} is $\Lf$-invariant. That is, for any $V\in \Gamma(T^{\perp,i} M)$,  $\Lf(V)\in \Gamma(T^{\perp,i} M)$, for $i=0,1,2$.
\item Let $V\in \Gamma(T^{\perp,0} M)$ with $V=v_0\eta$. Then we have
\begin{equation}\label{eq-LLL0}
\begin{split}
\Lf(v_0\eta)=\left(\Delta^f(v_0)+2nv_0\right)\eta.\\
\end{split}
\end{equation}
 \item Let  $V\in \Gamma(T^{\perp,1} M)$ such that  $V=\sum_iv_{i}\left(\sigma_i,0\right)$ under the local basis $\{(\sigma_i,0)\}$. Then we have
\begin{equation}\label{eq-LLL1}
\begin{split}
\Lf(V)=&\frac{1}{c_1^2}\left(\Lfa\left(\sum_iv_{i}\sigma_i\right),0\right)+\frac{1}{\hat{c}_1^2}\left( \sum_i\left(\Delta^{\hat f_1}v_{i}\right)\sigma_i ,0\right).\\
\end{split}
\end{equation}
 \item Let  $V\in \Gamma(T^{\perp,2} M)$ such that  $V=\left(0,\sum_{\hat j}\hat v_{\hat j}\tau_{\hat j}\right)$ under the local basis $\{(0,\tau_{\hat j})\}$. Then we have
\begin{equation}\label{eq-LLL2}
\begin{split}
\Lf(V)= \frac{1}{\hat{c}_1^2}\left(0,\Lfb\left(\sum_{\hat j}\hat v_{\hat j}\tau_{\hat j}\right)\right) +\frac{1}{c_1^2}\left(0,\sum_{\hat j}\left(\Delta^{f_1}\hat v_{\hat j}\right)\tau_{\hat j}\right).
\end{split}
\end{equation}
\end{enumerate} }
\end{lemma}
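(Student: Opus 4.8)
The plan is to prove Lemma~\ref{lemma-m} by a direct computation, using the explicit formulas \eqref{eq-A1}--\eqref{eq-A-eta} for the shape operators, the formula \eqref{eq-Lap} for $\Delta^f$, the formula \eqref{eq-nabla} for the normal connection, and the fact \eqref{eq-eta} that $\eta$ is parallel. The key structural observation driving everything is that the three summands $T^{\perp,0}M$, $T^{\perp,1}M$, $T^{\perp,2}M$ are exactly the pieces on which $\tilde{\mathcal{A}}^f$ and $\Delta^{\perp,f}$ act ``block-diagonally,'' so I will treat each of the three cases in turn.

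\textbf{Invariance of the decomposition and the $\tilde{\mathcal{A}}^f$-term.}
First I would compute $\tilde{\mathcal{A}}^f$ on each summand. By definition $\langle\tilde{\mathcal{A}}^f(V),W\rangle=\langle A^f_V,A^f_W\rangle$. Since, by \eqref{eq-A1}--\eqref{eq-A-eta}, the matrices $A^f_{(\sigma_i,0)}$ are supported in the $TM_1$-block, the $A^f_{(0,\tau_j)}$ in the $T\hat M_1$-block, and $A^f_\eta$ is diagonal with entries $-\hat c_1/c_1$ and $c_1/\hat c_1$, the Hilbert--Schmidt pairing $\langle A^f_V,A^f_W\rangle$ vanishes whenever $V,W$ lie in different summands of \eqref{eq-decomp} --- the relevant trace splits as a sum over the two tangential blocks and the cross terms between $\eta$ and $(\sigma_i,0)$ (resp.\ $(0,\tau_j)$) vanish because $\mathrm{tr}(A^{f_1}_{\sigma_i})=0$ by minimality of $f_1$ (resp.\ $\mathrm{tr}(A^{\hat f_1}_{\tau_j})=0$). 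Hence $\tilde{\mathcal{A}}^f$ preserves each $T^{\perp,i}M$. On $T^{\perp,0}M$ one gets $\tilde{\mathcal{A}}^f(v_0\eta)=|A^f_\eta|^2 v_0\eta=\big(n_1\hat c_1^2/c_1^2+\hat n_1 c_1^2/\hat c_1^2\big)v_0\eta$, and with $c_1^2=n_1/n$, $\hat c_1^2=\hat n_1/n$ this coefficient equals $n$. On $T^{\perp,1}M$, writing $V=\sum_i v_i(\sigma_i,0)$, the pairing with $W=\sum_j w_j(\sigma_j,0)$ gives $\sum_{i,j}v_iw_j\,\tfrac{1}{c_1^2}\langle A^{f_1}_{\sigma_i},A^{f_1}_{\sigma_j}\rangle$, i.e.\ $\tilde{\mathcal{A}}^f(V)=\tfrac{1}{c_1^2}\big(\widetilde{\mathcal{A}}^{f_1}(\sum_i v_i\sigma_i),0\big)$ (interpreting the $v_i$ as coefficients of a normal field of $f_1$); likewise on $T^{\perp,2}M$.

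\textbf{The Laplacian term.}
Next I would compute $\Delta^{\perp,f}$ on each summand. For $V=v_0\eta$, parallelism \eqref{eq-eta} of $\eta$ gives $\Delta^{\perp,f}(v_0\eta)=(\Delta^f v_0)\eta$, and adding $nV$ from the Jacobi operator and the $n v_0\eta$ from $\tilde{\mathcal A}^f$ yields $\Lf(v_0\eta)=(\Delta^f v_0+2nv_0)\eta$, which is \eqref{eq-LLL0}. For $V=\sum_i v_i(\sigma_i,0)$, formula \eqref{eq-nabla} shows that the normal connection of $f$ restricted to $T^{\perp,1}M$ is, after the rescalings $1/c_1$ in the $M_1$-directions and $1/\hat c_1$ in the $\hat M_1$-directions, just $\nabla^{\perp,f_1}$ acting in the $M_1$-variable and ordinary differentiation in the $\hat M_1$-variable (the $\tau_j$-slot being absent). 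Using \eqref{eq-Lap}-style bookkeeping --- the $M_1$-part of the rough Laplacian contributes $\tfrac{1}{c_1^2}$ times the $f_1$-normal Laplacian and the $\hat M_1$-part contributes $\tfrac{1}{\hat c_1^2}$ times the scalar Laplacian $\Delta^{\hat f_1}$ applied to each coefficient $v_i$ --- one gets $\Delta^{\perp,f}V=\tfrac{1}{c_1^2}(\Delta^{\perp,f_1}(\sum_i v_i\sigma_i),0)+\tfrac{1}{\hat c_1^2}(\sum_i(\Delta^{\hat f_1}v_i)\sigma_i,0)$. Now I assemble $\Lf=\Delta^{\perp,f}+n+\tilde{\mathcal A}^f$: the $\tfrac{1}{c_1^2}$-pieces combine to $\tfrac{1}{c_1^2}(\Delta^{\perp,f_1}+n c_1^2+\widetilde{\mathcal A}^{f_1})(\sum_i v_i\sigma_i,0)$, and since $nc_1^2=n_1$ this is precisely $\tfrac{1}{c_1^2}(\Lfa(\sum_i v_i\sigma_i),0)$, giving \eqref{eq-LLL1}; the case \eqref{eq-LLL2} is symmetric. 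The main obstacle I anticipate is purely bookkeeping: making sure the second fundamental form of the factor immersion inside the sphere $S^{n_1+p_1}$, versus inside $S^{n+p}$, is accounted for with the right $1/c_1$ and $1/\hat c_1$ scalings (this is exactly where \eqref{eq-A1}--\eqref{eq-nabla} and the relation $f_*(X)=\tfrac1{c_1}f_{1*}(X_1)+\tfrac1{\hat c_1}\hat f_{1*}(\hat X_1)$ enter), and checking that the ``$+n$'' from the ambient sphere of $f$ redistributes correctly into the ``$+n_1$'' and ``$+\hat n_1$'' of the Jacobi operators of the factors together with the extra curvature absorbed by $\tilde{\mathcal A}^f$ --- everything else is routine.
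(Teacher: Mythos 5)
Your proposal is correct and follows essentially the same route as the paper's own proof: you compute $\tilde{\mathcal{A}}^f$ blockwise using \eqref{eq-A1}--\eqref{eq-A-eta} (with the trace-zero observation from minimality of the factors giving the invariance of the decomposition), compute $\Delta^{\perp,f}$ using the parallelism of $\eta$ and the scaling coming from \eqref{eq-nabla}, and then reassemble, noting $nc_1^2=n_1$ and $n\hat c_1^2=\hat n_1$ to recognize the factor Jacobi operators. This matches the paper's computation, with your treatment of the $\tilde{\mathcal{A}}^f$-invariance spelled out in slightly more detail.
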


\begin{proof} (1) comes directly from the definitions of $T^{\perp,i} M$ and the fact that  $\eta$ being parallel.

(2) By \eqref{eq-A1}, \eqref{eq-A2} and \eqref{eq-A-eta}, together with the fact that $tr A^{f}_{(\sigma_i,0)}=tr  A^{f}_{(0,\tau_j)}=0$ for all $i,j$, we obtain
\[\tilde{\mathcal A}^f(\eta)=\left(n_1\cdot\frac{\hat{c}_1^2}{c_1^2}+\hat n_1\cdot\frac{c_1^2}{\hat{c}_1^2}\right)\eta=n\eta.\]
Since $\eta$ is parallel, we have
\[\Lf(v_0\eta)=\left(\Delta^f(v_0)+nv_0\right)\eta+v_0\tilde{\mathcal A}^f(\eta)=\left(\Delta^f(v_0)+2nv_0\right)\eta.\]

(3) and (4):  Consider $\Lf\left(\sum_iv_{i}\sigma_i,\sum_{\hat j}\hat v_{\hat j}\tau_{\hat j}\right)$. First, by \eqref{eq-A1}, \eqref{eq-A2} and \eqref{eq-A-eta} we have
\[\tilde{\mathcal A}^f\left(\sum_iv_{i}\sigma_i,\sum_{\hat j}\hat v_{\hat j}\tau_{\hat j}\right)=\frac{1}{c_1^2} \left(\tilde{\mathcal A}^{f_1}\left(\sum_iv_{i}\sigma_i\right),0\right)+\frac{1}{\hat{c}_1^2}\left(0,\tilde{\mathcal A}^{\hat f_1}\left(\sum_{\hat j}\hat v_{\hat j}\tau_{\hat j}\right)\right).\]
Second, we have
\[\begin{split}\Delta^{\perp,f}\left(\sum_iv_{i}\sigma_i,\sum_{\hat j}\hat v_{\hat j}\tau_{\hat j}\right)&=\Delta^{\perp,f}\left( \sum_iv_{i}\sigma_i ,0\right)+\Delta^{\perp,f}\left(0, \sum_{\hat j}\hat v_{\hat j}\tau_{\hat j} \right)\\
&=\frac{1}{c_1^2}\left(\Delta^{\perp,f_1}\left(\sum_iv_{i}\sigma_i\right),0\right)+\frac{1}{c_1^2}\left(0, \sum_{\hat j}\left(\Delta^{f_1}\hat v_{\hat j}\right)\tau_{\hat j} \right)\\
&\ \ +\frac{1}{\hat{c}_1^2}\left( \sum_i\left(\Delta^{\hat f_1}v_{i}\right)\sigma_i ,0\right)+\frac{1}{\hat{c}_1^2}\left(0,\Delta^{\perp,\hat f_1}\left(\sum_{\hat j}\hat v_{\hat j}\tau_{\hat j}\right)\right).\\
\end{split}\]
Summing up the above equations finish the proof of \eqref{eq-LLL1} and  \eqref{eq-LLL2}.
\end{proof}

Let $\pi_1$ and $\hat{\pi}_1$ denote the  {projections} of $M$ into $M_1$ and $\hat M_1$ respectively. Then for any $\phi\in C^{\infty}(M_1)$ and $\hat{\phi}\in C^{\infty}(\hat{M}_1)$, $\phi\circ\pi_1\in C^{\infty}(M)$  and $\hat\phi\circ\hat{\pi}_1\in C^{\infty}(M)$.
Similarly, we identify  $V^{f_1}\in \Gamma(T^\perp M_1)$ as the projections defining on $M$ such that
\[V^{f_1}(p_1,\hat{p}_1):=(V^{f_1}(p_1),0)\hbox{ for all $(p_1,\hat{p}_1)\in M$.}\]
 Similarly for $V^{\hat{f}_1}\in \Gamma(T^\perp \hat{M}_1)$ we define
 \[V^{\hat{f}_1}(p_1,\hat{p}_1):=(0,V^{\hat{f}_1}(\hat{p}_1))\hbox{ for all  $(p_1,\hat{p}_1)\in M$.}\]
The second lemma is then stated as follow.
\begin{lemma}\label{lemma-m2}
The eigenspace
$\mathfrak{E}_{\mu^{f}_{\alpha}}$ of $\mu^{f}_{\alpha}$ is given by
\begin{equation}\label{eq-Ef}
\mathfrak{E}_{\mu^{f}_{\alpha}}=\mathfrak{V}_{\mu^{f}_{\alpha},0}\oplus\mathfrak{V}_{\mu^{f}_{\alpha},1}\oplus\mathfrak{V}_{\mu^{f}_{\alpha},2}
\end{equation}
with
\begin{eqnarray}
\label{eq-E11}&\mathfrak{V}_{\mu^{f}_{\alpha},0}:=&\left\{\sum_{\beta,\hat\beta}a_{\beta,\hat\beta} \left(v^{f_1}_{\beta}\circ\pi_1\right)\left(v^{\hat f_1}_{\hat\beta}\circ\hat\pi_1\right) \eta  \left|\mu^{f}_{\alpha}=n\left(\frac{\lambda^{f_1}_{\beta}}{n_1}+\frac{\lambda^{\hat f_1}_{\hat\beta}}{\hat{n}_1}-2\right), a_{\beta,\hat\beta}\in\R \right\}\right.,\\
\label{eq-E12}&\mathfrak{V}_{\mu^{f}_{\alpha},1}:=&\left\{\sum_{\gamma,\hat\beta}a_{\gamma,\hat\beta}
\left(v^{\hat{f}_1}_{\hat\beta}\circ\hat{\pi}_1\right)
\left(V^{f_1}_{\gamma},0\right)
\left|\mu^{f}_{\alpha}=n\left(\frac{\mu^{f_1}_{\gamma}}{n_1}
+\frac{\lambda^{\hat{f}_1}_{\hat\beta}}{\hat{n}_1}\right),a_{\gamma,\hat\beta}\in\R\right\}\right.,\\
\label{eq-E13}&\mathfrak{V}_{\mu^{f}_{\alpha},2}:=&\left\{\sum_{\beta,\hat\gamma}a_{\beta,\hat\gamma}\left(v^{ f_1}_{\beta}\circ\pi_1\right)\left(0,V^{\hat{f}_1}_{\hat\gamma}\right)
\left|\mu^{f}_{\alpha}=n\left(\frac{\mu^{\hat{f}_1}_{\hat\gamma}}{\hat{n}_1}
+\frac{\lambda^{f_1}_{\beta}}{n_1}\right),a_{\beta,\hat\gamma}\in\R\right\}\right.,
\end{eqnarray}
where
\[v^{f_1}_{\beta}\in E^{f_1}_{\lambda^{f_1}_{\beta}},\  \ v^{\hat f_1}_{\hat\beta}\in E^{\hat f_1}_{\lambda^{\hat f_1}_{\hat\beta}},\  V^{f_1}_{\gamma}\in\mathfrak{E} _{\mu^{f_1}_{\gamma}}\ \hbox{ and } V^{\hat{f}_1}_{\hat\gamma}\in\mathfrak{E} _{\mu^{\hat f_1}_{\hat\gamma}}.\]
\end{lemma}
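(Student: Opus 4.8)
The plan is to reduce everything to the $\Lf$-invariant splitting $T^\perp M=T^{\perp,0}M\oplus T^{\perp,1}M\oplus T^{\perp,2}M$ established in Lemma \ref{lemma-m}(1). Since an eigenspace of a self-adjoint operator splits along any invariant decomposition of the bundle, we automatically have $\mathfrak{E}_{\mu^f_\alpha}=\big(\mathfrak{E}_{\mu^f_\alpha}\cap\Gamma(T^{\perp,0}M)\big)\oplus\big(\mathfrak{E}_{\mu^f_\alpha}\cap\Gamma(T^{\perp,1}M)\big)\oplus\big(\mathfrak{E}_{\mu^f_\alpha}\cap\Gamma(T^{\perp,2}M)\big)$, so it suffices to identify each summand with the corresponding $\mathfrak{V}_{\mu^f_\alpha,i}$. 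By the symmetry between the two factors, the cases $i=1$ and $i=2$ are mirror images, so I would write out only $i=0$ and $i=1$.

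For $i=0$, a section of $T^{\perp,0}M$ has the form $v_0\eta$ with $v_0\in C^\infty(M)$, and \eqref{eq-LLL0} gives $\Lf(v_0\eta)=(\Delta^f v_0+2nv_0)\eta$. Hence $v_0\eta\in\mathfrak{E}_{\mu^f_\alpha}$ exactly when $\Delta^f v_0=-(\mu^f_\alpha+2n)v_0$, i.e. $v_0$ is a $\Delta^f$-eigenfunction of eigenvalue $\mu^f_\alpha+2n$. Plugging in the scaling relation \eqref{eq-Lap} and invoking Proposition \ref{prop-spec}, the $\Delta^f$-eigenfunctions of a given eigenvalue $\Lambda$ are spanned by the products $(v^{f_1}_\beta\circ\pi_1)(v^{\hat f_1}_{\hat\beta}\circ\hat\pi_1)$ with $\frac{n}{n_1}\lambda^{f_1}_\beta+\frac{n}{\hat n_1}\lambda^{\hat f_1}_{\hat\beta}=\Lambda$; taking $\Lambda=\mu^f_\alpha+2n$ gives precisely the eigenvalue constraint in \eqref{eq-E11}, and completeness of the $\Delta^f$-eigenfunctions (spectral theorem on a closed manifold) shows these products span all of $\mathfrak{E}_{\mu^f_\alpha}\cap\Gamma(T^{\perp,0}M)$.

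For $i=1$, identify $\Gamma(T^{\perp,1}M)$ with the sections over $M=M_1\times\hat M_1$ of the bundle pulled back from $T^\perp M_1$, writing a section as $\sum_i v_i(\sigma_i,0)$. By \eqref{eq-LLL1}, under this identification $\Lf$ acts as $\frac{1}{c_1^2}\Lfa+\frac{1}{\hat c_1^2}\Delta^{\hat f_1}$, where $\Lfa$ differentiates only in the $M_1$-directions (the $\hat M_1$-point a parameter) and $\Delta^{\hat f_1}$ acts componentwise in the $\hat M_1$-directions with no normal-connection term. These operators commute, are individually self-adjoint and elliptic on their respective factors, and have discrete spectra; a Fubini/Fourier argument — expand a section fibrewise over $\hat M_1$ in $\Lfa$-eigensections $V^{f_1}_\gamma$, then expand the resulting smooth coefficient functions on $\hat M_1$ in $\Delta^{\hat f_1}$-eigenfunctions $v^{\hat f_1}_{\hat\beta}$ — shows the joint eigensections $(V^{f_1}_\gamma,0)\,(v^{\hat f_1}_{\hat\beta}\circ\hat\pi_1)$ are complete, with $\Lf$-eigenvalue $\frac{\mu^{f_1}_\gamma}{c_1^2}+\frac{\lambda^{\hat f_1}_{\hat\beta}}{\hat c_1^2}=n\big(\frac{\mu^{f_1}_\gamma}{n_1}+\frac{\lambda^{\hat f_1}_{\hat\beta}}{\hat n_1}\big)$ after inserting $c_1^2=n_1/n$, $\hat c_1^2=\hat n_1/n$ (minimality, Theorem 2.3). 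Collecting those with eigenvalue $\mu^f_\alpha$ gives $\mathfrak{V}_{\mu^f_\alpha,1}$ as in \eqref{eq-E12}; the case $i=2$ is identical with $f_1$ and $\hat f_1$ interchanged, using \eqref{eq-LLL2}, producing \eqref{eq-E13}. Summing the three summands gives \eqref{eq-Ef}, and specializing ($\beta=\hat\beta=0$ in \eqref{eq-E11}, and $\hat\beta=0$ with $\gamma$ minimal — resp. $\beta=0$ — in \eqref{eq-E12}--\eqref{eq-E13}) yields the bottom eigenvalue estimate \eqref{eq-eigen}.

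The step I expect to be the main obstacle is the completeness assertion in the $i=1,2$ cases. It is the vector-bundle analogue of Proposition \ref{prop-spec}, and the substantive point is the structural fact that $T^{\perp,1}M$, with the part of $\Lf$ it carries, genuinely decouples as the ``tensor product'' of the data $(T^\perp M_1,\Lfa)$ on $M_1$ with $(C^\infty(\hat M_1),\Delta^{\hat f_1})$ on $\hat M_1$ — equivalently, that the $\hat M_1$-directional part of the normal Laplacian $\Delta^{\perp,f}$ restricted to $T^{\perp,1}M$ carries no connection contribution, which is exactly the content of \eqref{eq-nabla} and \eqref{eq-LLL1}. Once that is recorded, the completeness and the eigenvalue bookkeeping (including re-indexing the $\mu^f_\alpha$ over distinct values and checking there is no overlap between the three families beyond what is already displayed in \eqref{eq-Ef}) are routine.
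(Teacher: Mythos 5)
Your proposal follows essentially the same route as the paper: decompose the eigenspace along the $\mathcal{L}^f$-invariant splitting $T^{\perp,0}M\oplus T^{\perp,1}M\oplus T^{\perp,2}M$, reduce the $i=0$ case to $\Delta^f$-eigenfunctions via \eqref{eq-LLL0} and Proposition~\ref{prop-spec}, and handle $i=1,2$ by a two-step expansion — fibrewise in $\mathcal{L}^{f_1}$-eigensections and then in $\Delta^{\hat f_1}$-eigenfunctions — relying on spectral completeness for the elliptic self-adjoint operator on the normal bundle. The paper supplies the reference you anticipated needing for that last completeness point (Chapter~III, Theorem~5.8 of \cite{LM}), but otherwise the argument is the same.
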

\begin{proof}By Lemma  \ref{lemma-m}, we can decompose the eigenspace
$\mathfrak{E}_{\mu^{f}_{\alpha}}$ according to the decomposition of the normal bundle $T^\perp M=T^{\perp,0} M\oplus T^{\perp,1} M\oplus T^{\perp,2} M$:
\[\mathfrak{E}_{\mu^{f}_{\alpha}}
=\left(\mathfrak{E}_{\mu^{f}_{\alpha}}\cap \Gamma(T^{\perp,0} M)\right)\oplus
\left(\mathfrak{E}_{\mu^{f}_{\alpha}}\cap \Gamma(T^{\perp,1} M)\right)\oplus\left(\mathfrak{E}_{\mu^{f}_{\alpha}}\cap \Gamma(T^{\perp,2} M)\right).\]
It therefore suffices to check that
\[\mathfrak{V}_{\mu^{f}_{\alpha},l}=\mathfrak{E}_{\mu^{f}_{\alpha}}\cap \Gamma(T^{\perp,l} M),\ l=0,1,2.\]

It is direct to show that $\mathfrak{V}_{\mu^{f}_{\alpha},l}\subseteq \mathfrak{E}_{\mu^{f}_{\alpha}}\cap \Gamma(T^{\perp,l} M)$, $l=0,1,2$, from the proof of Lemma  \ref{lemma-m}.

To show that
 $\mathfrak{E}_{\mu^{f}_{\alpha}}\cap \Gamma(T^{\perp,l} M)\subseteq\mathfrak{V}_{\mu^{f}_{\alpha},l}$ for $l=0,1,2$, one needs only to check that the sections defining $\mathfrak{V}_{\mu^{f}_{\alpha},l}$ also form a basis of $\mathfrak{E}_{\mu^{f}_{\alpha}}\cap \Gamma(T^{\perp,l} M)$  for $l=0,1,2$.

Let $V\in \mathfrak{E}_{\mu^{f}_{\alpha}}\cap \Gamma(T^{\perp,0} M)$. Then $V=v_0\eta$ for some $v_0\in C^{\infty}(M)$. By
  \eqref{eq-LLL0}, we have $v_0\in E^{f}_{\lambda^{f}_{\alpha}}$ with $\lambda^{f}_{\alpha}=\mu^{f}_{\alpha}+2n$. By Proposition 3.2,
  $V=v_0\eta\in\mathfrak{V}_{\mu^{f}_{\alpha},0}$. So $\mathfrak{E}_{\mu^{f}_{\alpha}}\cap \Gamma(T^{\perp,0} M)\subseteq\mathfrak{V}_{\mu^{f}_{\alpha},0}$ holds.

Let $V\in \mathfrak{E}_{\mu^{f}_{\alpha}}\cap \Gamma(T^{\perp,1} M)$.
By  Chapter III, Theorem 5.8 of \cite{LM},\footnote{ We are thankful to the referee for pointing out Chapter III, Theorem 5.8
of \cite{LM} and for the helps on proofs of this lemma.}
 $L^2(T^{\perp}M_1)$ has a $L^2$ normalized basis $\left\{\mathcal{V}_{\gamma}^{f_1}\right\}$ via eigensections of $\mathcal L^{f_1}$, that is, $\mathcal{V}_{\gamma}^{f_1}\in\mathfrak{E} _{\mu^{f_1}_{\gamma}}$ and $||\mathcal{V}_{\gamma}^{f_1}||_{L^2}=1$ for all $\gamma$. So
 \[V=\sum_{\gamma}\left(v_{\gamma}\circ\hat\pi\right)\left(\mathcal{V}_{\gamma}^{f_1},0\right),
 \hbox{ with }v_{\gamma}= \int_{M_1}\langle V,(\mathcal{V}_{\gamma}^{f_1},0)\rangle\dd M_1.\]
 Now ${v}_{\gamma}\in C^{\infty}(\hat M_1)$
and  hence we have furthermore
 \[{v}_{\gamma}=\sum_{\hat\beta}\tilde{a}_{\gamma\hat\beta} \hat{v}^{\hat f_1}_{\hat\beta},\ \hbox{ with }\tilde{a}_{\gamma\hat\beta}\in\R\hbox{ and  }v^{\hat f_1}_{\hat\beta}\in E^{\hat f_1}_{\lambda^{\hat f_1}_{\hat\beta}}.\] Similar to computations of  $\mathcal L^f$ in  Lemma  \ref{lemma-m}, we have
\[\mathcal L^f(V)+\mu^{f}_{\alpha}V=\sum_{\gamma,\hat\beta}\tilde{a}_{\gamma\hat\beta}\left(\mu^{f}_{\alpha}-\frac{n}{n_1}{\mu^{f_1}_{\gamma}}
-\frac{n}{\hat{n}_1}{\lambda^{\hat{f}_1}_{\hat\beta}}\right)\left(\hat{v}_{\hat\beta}\circ\hat\pi\right) \left(\mathcal{V}_{\gamma}^{f_1},0\right)=0.\]
So $\tilde{a}_{\gamma\hat\beta}=0$ if $\mu^{f}_{\alpha}\neq \frac{n}{n_1}{\mu^{f_1}_{\gamma}}
+\frac{n}{\hat{n}_1}{\lambda^{\hat{f}_1}_{\hat\beta}}$. Hence $V\in\mathfrak{V}_{\mu^{f}_{\alpha},1}$ and $\mathfrak{E}_{\mu^{f}_{\alpha}}\cap \Gamma(T^{\perp,1} M)\subseteq\mathfrak{V}_{\mu^{f}_{\alpha},1}$ holds.

Similarly   $\mathfrak{E}_{\mu^{f}_{\alpha}}\cap \Gamma(T^{\perp,2} M)\subseteq\mathfrak{V}_{\mu^{f}_{\alpha},2}$ holds.
\end{proof}

\ \\
{\em Proof of Theorem \ref{thm-main}.}

(1)	By Lemma \ref{lemma-m2}, it suffices to compute $\sum_{\mu^{f}_{\alpha}<0} \left(\dim\mathfrak{V}_{\mu^{f}_{\alpha},0}+\dim \mathfrak{V}_{\mu^{f}_{\alpha},1}+\dim\mathfrak{V}_{\mu^{f}_{\alpha},2}\right)$.	We have by \eqref{eq-E11}, \eqref{eq-E12}, \eqref{eq-E13}, \eqref{eq-NN} and \eqref{eq-NN1}
\[ \sum_{\mu^{f}_{\alpha}<0} \dim\mathfrak{V}_{\mu^{f}_{\alpha},0}=\mathfrak{I}_0+1,\]
\[\sum_{\mu^{f}_{\alpha}<0}\dim \mathfrak{V}_{\mu^{f}_{\alpha},1}=\sum_{\mu^{f_1}_{\alpha}<0}\dim \mathfrak{E} _{\mu^{f_1}_{\alpha}}+\mathfrak{I}_1=Ind(f_1)+\mathfrak{I}_1,\]	
and
\[\sum_{\mu^{f}_{\alpha}<0}\dim \mathfrak{V}_{\mu^{f}_{\alpha},2}=\sum_{\mu^{\hat{f}_1}_{\alpha}<0}\dim \mathfrak{E}_{\mu^{\hat{f}_1}_{\alpha}}+\hat{\mathfrak{I}}_1=Ind(\hat{f}_1)+\hat{\mathfrak{I}}_1.\]		

(2) follows by   same computations in terms of  \eqref{eq-E11}, \eqref{eq-E12}, \eqref{eq-E13},  \eqref{eq-NN} and \eqref{eq-NN1}.
\hfill$\Box$\vspace{2mm}

 Before the proof of Theorem \ref{thm-main1}, we need one more lemma, which has been proved in \cite{Per1}  for minimal hypersurfaces.
\begin{lemma}\label{le-per}
	Let $\psi:M^n\rightarrow S^{n+p}$ be an $n-$dimensional closed minimal submanifold, which is not totally geodesic. Then 		\begin{equation}\dim \mathfrak{E}_{\mu^{\psi}_{\alpha} }|_{\mu^{\psi}_{\alpha}=-n}\geq n+p+1, 	\end{equation}
and hence
	\begin{equation}
		Ind(\psi)\geq n+p+1.
	\end{equation}
\end{lemma}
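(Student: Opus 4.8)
The plan is to adapt Simons' classical construction \cite{Simons} (carried out for minimal hypersurfaces in \cite{Per1}) to arbitrary codimension, by exhibiting an $(n+p+1)$-dimensional family of eigensections of $\mathcal{L}^{\psi}$ with eigenvalue $-n$, which all lie in $\mathfrak{E}_{-n}$. Regard $M\subset S^{n+p}\subset\R^{n+p+1}$. For a fixed vector $v\in\R^{n+p+1}$, split it pointwise along $\psi$ as $v=v^{\top}+W_v+\langle v,\psi\rangle\psi$, where $v^{\top}\in\Gamma(TM)$ and $W_v\in\Gamma(T^{\perp}M)$ is the component normal to $\psi$ inside the sphere. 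Differentiating the ambient relation $\bar{\nabla}_X v=0$ and separating the tangential, sphere-normal and radial parts, I would first record the structural relations $\nabla^{\perp}_X W_v=-\mathrm{II}(X,v^{\top})$ and $\nabla^{M}_X v^{\top}=A^{\psi}_{W_v}X-\langle v,\psi\rangle X$, where $\mathrm{II}$ denotes the second fundamental form of $\psi$ in $S^{n+p}$.

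Next I would prove the key identity $\mathcal{L}^{\psi}(W_v)=n\,W_v$, so that $W_v\in\mathfrak{E}_{-n}$. Starting from $\nabla^{\perp}_X W_v=-\mathrm{II}(X,v^{\top})$, a Bochner-type computation of $\Delta^{\perp,\psi}W_v$ in a geodesic frame at a point gives $\Delta^{\perp,\psi}W_v=-\sum_i(\nabla^{\perp}_{e_i}\mathrm{II})(e_i,v^{\top})-\sum_i\mathrm{II}(e_i,A^{\psi}_{W_v}e_i)+\langle v,\psi\rangle\,\mathrm{tr}\,\mathrm{II}$; the first sum vanishes by the Codazzi equation in the space form $S^{n+p}$ together with minimality, the last term vanishes by minimality, and $\sum_i\mathrm{II}(e_i,A^{\psi}_{W_v}e_i)=\tilde{\mathcal{A}}^{\psi}(W_v)$ directly from the definition $\langle\tilde{\mathcal{A}}^{\psi}(V),W\rangle=\langle A^{\psi}_V,A^{\psi}_W\rangle$. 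Hence $\Delta^{\perp,\psi}W_v+\tilde{\mathcal{A}}^{\psi}(W_v)=0$ and $\mathcal{L}^{\psi}(W_v)=\Delta^{\perp,\psi}W_v+nW_v+\tilde{\mathcal{A}}^{\psi}(W_v)=nW_v$, so the linear map $\Phi:\R^{n+p+1}\to\mathfrak{E}_{-n}$, $v\mapsto W_v$, is well defined.

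It then remains to show $\Phi$ is injective when $\psi$ is not totally geodesic; this is where the hypothesis is essential. If $W_v\equiv0$, then, since $\nabla^{M}h=v^{\top}$ for $h:=\langle v,\psi\rangle$, the second structural relation gives $\mathrm{Hess}\,h=-h\,g$ on the closed manifold $M$. If $h$ is constant, the equation forces $h\equiv0$ and $v^{\top}\equiv0$, so $v=0$; otherwise, by Obata's theorem, $(M,g)$ is isometric to the round unit sphere $S^{n}(1)$, and then minimality gives $\Delta^{\psi}\psi=-n\psi$, so each coordinate of $\psi$ lies in the $\lambda_1$-eigenspace of $S^{n}(1)$, which is exactly the $(n+1)$-dimensional space of restrictions of linear forms on $\R^{n+1}$; writing $\psi=A\circ\iota$ with $\iota:S^{n}(1)\hookrightarrow\R^{n+1}$ the standard embedding and $A$ an $(n+p+1)\times(n+1)$ matrix, the requirement that $\psi$ be an isometric immersion into $S^{n+p}$ forces $A^{\top}A=I_{n+1}$, making $\psi(M)$ a totally geodesic $S^n\subset S^{n+p}$, a contradiction. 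Hence $\ker\Phi=\{0\}$, so $\dim\mathfrak{E}_{-n}\ge n+p+1$; since $-n<0$, this yields $Ind(\psi)\ge\dim\mathfrak{E}_{-n}\ge n+p+1$.

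I expect the main obstacle to be partly the bookkeeping in the Bochner computation --- the threefold splitting of $\bar{\nabla}_X v$ and the precise use of Codazzi in $S^{n+p}$ --- but chiefly the injectivity of $\Phi$: eliminating $\ker\Phi$ genuinely requires Obata's theorem together with the $\lambda_1$-rigidity of the totally geodesic $S^n\hookrightarrow S^{n+p}$, and this is exactly where dropping ``not totally geodesic'' breaks the conclusion, since for the equatorial $S^n\subset S^{n+p}$ one only gets $\dim\mathfrak{E}_{-n}=p$.
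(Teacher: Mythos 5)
Your proof is correct, and it takes a genuinely different route from the paper's on the decisive step. Both arguments reduce the lemma to showing that the linear map $v\mapsto v^{\perp}$ (your $W_v$) from $\R^{n+p+1}$ into $\mathfrak{E}_{-n}$ is injective, and both ultimately trade on the ``not totally geodesic'' hypothesis at that point. The paper simply cites Simons for the fact that $v^{\perp}\in\mathfrak{E}_{-n}$, whereas you re-derive it via a Bochner computation using Codazzi in $S^{n+p}$ and minimality; your computation is correct and makes the argument self-contained. The real divergence is in the injectivity. The paper argues geometrically: if $\mathbf{a}^{\perp}\equiv 0$, then $\mathbf{a}^T$ is a tangent field on $M$ whose integral curves are great semicircles from $-\mathbf{a}$ to $\mathbf{a}$; by completeness these arcs lie in $M$, forcing $M$ to be the great $n$-sphere tangent to $T_{\mathbf{a}}M$, a contradiction. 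You instead extract from $W_v\equiv 0$ the Hessian equation $\mathrm{Hess}\,h=-h\,g$ for $h=\langle v,\psi\rangle$, dispose of the constant case directly, and in the non-constant case invoke Obata's theorem to identify $(M,g)$ with $S^n(1)$, then use Takahashi plus the fact that $E^{S^n(1)}_{n}$ consists exactly of linear functions to conclude $\psi=A\circ\iota$ with $A^{\top}A=I_{n+1}$, hence totally geodesic. Your route is heavier (it imports Obata and a $\lambda_1$-rigidity argument), while the paper's is more elementary and purely geometric; on the other hand yours is more transparently a PDE argument and exhibits cleanly exactly where the ``not totally geodesic'' hypothesis enters. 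Both are sound, so this is a valid alternative proof rather than a reproduction of the paper's.
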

Since the index estimate of minimal submanifolds involves sections while the case of minimal hypersurfaces concerns functions, we will give a proof of Lemma \ref{le-per} in  Appendix A. It is basically taken verbatim from the proof of Lemma 3.1 of \cite{Per1}.

\ \\
{\em Proof of Theorem \ref{thm-main1}.}

(1)  Since both $f_1$ and $\hat{f}_1$ are full,  their coordinate functions are linear independent. So {there exist  some $ \alpha$ and $ \beta$} such that \[\dim E^{f_1}_{\lambda^{f_1}_{\alpha}}|_{\lambda^{f_1}_{\alpha}=n_1}\geq n_1+p_1+1,\hbox{ and }\dim E^{\hat{f}_1}_{\lambda^{\hat{f}_1}_{\beta}}|_{\lambda^{\hat{f}_1}_{\beta}=\hat{n}_1}\geq \hat{n}_1+\hat{p}_1+1.\]
So
\[\mathfrak{I}_0\geq 0+(n_1+p_1+1)+(\hat{n}_1+\hat{p}_1+1)=n+p+1.\]
This finishes \eqref{eq-ind2}.

(2)  By the same argument as above we have
\[\mathfrak{N}_0\geq \dim E^{f_1}_{\lambda^{f_1}_{\alpha}}|_{\lambda^{f_1}_{\alpha}=n_1}\cdot \dim E^{\hat{f}_1}_{\lambda^{\hat{f}_1}_{\beta}}|_{\lambda^{\hat{f}_1}_{\beta}=\hat{n}_1}\geq (n_1+p_1+1)(\hat{n}_1+\hat{p}_1+1).\]

 When $f_1$ and $\hat{f}_1$ are full and  not totally geodesic, by {Lemma \ref{le-per},} one   has
\[\dim \mathfrak{E}_{\mu^{f_1}_{\gamma} }|_{\mu^{f_1}_{\gamma}=-n_1}\geq n_1+p_1+1 \hbox{ and } \dim \mathfrak{E}_{\mu^{\hat{f}_1}_{\hat\gamma}}|_{\mu^{\hat{f}_1}_{\hat\gamma}=-\hat{n}_1}\geq \hat{n}_1+\hat{p}_1+1.\]
As a consequence
\[\mathfrak{N}_1\geq \dim \mathfrak{E}_{\mu^{f_1}_{\gamma} }|_{\mu^{f_1}_{\gamma}=-n_1}\cdot  \dim E^{\hat{f}_1}_{\lambda^{\hat{f}_1}_{\beta}}|_{\lambda^{\hat{f}_1}_{\beta}=\hat{n}_1}\geq (n_1+p_1+1)(\hat{n}_1+\hat{p}_1+1), \]
 and \[ \hat{\mathfrak{N}}_1\geq \dim \mathfrak{E}_{\mu^{\hat{f}_1}_{\hat\gamma}}|_{\mu^{\hat{f}_1}_{\hat\gamma}=-\hat{n}_1}\cdot \dim E^{f_1}_{\lambda^{f_1}_{\alpha}}|_{\lambda^{f_1}_{\alpha}=n_1}\geq (n_1+p_1+1)(\hat{n}_1+\hat{p}_1+1).\]
Summing up the above estimates yields \eqref{eq-null2}.
\hfill$\Box$

\subsubsection{Applications to some examples}

Recall that by Theorem 5.1.1 of \cite{Simons}, the totally geodesic submanifold $\hat f_1:\hat {S}^{\hat{n}_1}\rightarrow\hat {S}^{\hat{n}_1+\hat{p}_1}$ is the unique minimal submanifold with $Ind(\hat f_1)=\hat{p}_1$ or $Null(\hat f_1)=(\hat{n}_1+1)\hat{p}_1$.

Applying Theorem  \ref{thm-main}, we can re-obtain the following known results (see for example \cite{Per1} for the index of $f$).
\begin{corollary}\label{cor-CT}
	Let $f$ be the Clifford minimal hypersurface with $  f_1: S^{n_1}\rightarrow S^{n_1}$ and  $\hat f_1:\hat S^{\hat{n}_1}\rightarrow\hat S^{\hat{n}_1}$ being totally geodesic. Then
\[Ind(f)=n+3, \ Null(f)=(n_1+1)(\hat{n}_1+1).\]
In particular, the Clifford minimal hypersurface $f$ is non-degenerate.
\end{corollary}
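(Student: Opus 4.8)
The plan is to read everything off Theorem \ref{thm-main}. Here $f_1:S^{n_1}\to S^{n_1}$ and $\hat f_1:\hat S^{\hat n_1}\to\hat S^{\hat n_1}$ are the identity maps, so $p_1=\hat p_1=0$, $p=1$, and $f:S^{n_1}\times \hat S^{\hat n_1}\to S^{n+1}$ is the Clifford hypersurface. The normal bundles of $f_1$ and $\hat f_1$ have rank $0$, so their Jacobi operators act on the zero bundle; hence $Spec^{\mathcal L^{f_1}}=Spec^{\mathcal L^{\hat f_1}}=\emptyset$ and $Ind(f_1)=Ind(\hat f_1)=Null(f_1)=Null(\hat f_1)=0$. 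In particular all the ``mixed'' quantities $\mathfrak I_1,\hat{\mathfrak I}_1$ of \eqref{eq-NN1} and $\mathfrak N_1,\hat{\mathfrak N}_1$ of \eqref{eq-NN4} vanish, because each of them involves a $\mu^{f_1}_\gamma$ or a $\mu^{\hat f_1}_\gamma$.

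So it remains to compute $\mathfrak I_0$ and $\mathfrak N_0$, for which the only input is the Laplace spectrum of the round unit sphere: on $S^m$ the eigenvalues are $k(k+m-1)$, $k\ge 0$, so the smallest nonzero one is $m$, with multiplicity $m+1$, and the next one is $2(m+1)$. Thus for $\alpha\ge 1$ we have $\lambda^{f_1}_\alpha/n_1\ge 1$, with equality for exactly $n_1+1$ values of $\alpha$, and the next possible value of $\lambda^{f_1}_\alpha/n_1$ is $2+2/n_1>2$; likewise for $\hat f_1$. Hence in \eqref{eq-NN} the double count is $0$ (the minimum of $\lambda^{f_1}_\alpha/n_1+\lambda^{\hat f_1}_\beta/\hat n_1$ over $\alpha,\beta\ge 1$ is $2$, which is not $<2$), while the two single counts contribute $n_1+1$ and $\hat n_1+1$; so $\mathfrak I_0=n_1+\hat n_1+2=n+2$. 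Feeding this into \eqref{eq-ind} gives $Ind(f)=0+0+1+(n+2)+0+0=n+3$. For the nullity, in \eqref{eq-NN3} the value $\lambda^{f_1}_\alpha/n_1=2$ never occurs, since $2n_1$ lies strictly between the first two eigenvalues $n_1$ and $2n_1+2$ of $S^{n_1}$ (and is not a perfect square when $n_1=1$); the same holds for $\hat f_1$. On the other hand $\lambda^{f_1}_\alpha/n_1+\lambda^{\hat f_1}_\beta/\hat n_1=2$ with $\alpha,\beta\ge 1$ forces both summands to equal $1$, which gives $(n_1+1)(\hat n_1+1)$ pairs. Therefore $\mathfrak N_0=(n_1+1)(\hat n_1+1)$, and \eqref{eq-null} yields $Null(f)=(n_1+1)(\hat n_1+1)$.

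It remains to see that $f$ is non-degenerate, i.e. that every Jacobi field of $f$ comes from a Killing field of $S^{n+1}$. Write $\R^{n+2}=\R^{n_1+1}\oplus\R^{\hat n_1+1}$ and decompose $\mathfrak{so}(n+2)=\mathfrak{so}(n_1+1)\oplus\mathfrak{so}(\hat n_1+1)\oplus(\R^{n_1+1}\otimes\R^{\hat n_1+1})$; the first two summands generate isometries preserving each factor of $M$, hence are everywhere tangent to $M$ and project to $0$ in $T^\perp M$. For an ``off-diagonal'' generator $X_{ab}$ (the infinitesimal rotation mixing a basis vector of $\R^{n_1+1}$ with one of $\R^{\hat n_1+1}$), a short computation using $\eta=(\hat c_1 f_1,-c_1\hat f_1)$ shows that the normal component of $X_{ab}|_M$ is a nonzero constant multiple of $(x_a\circ\pi_1)(\hat x_b\circ\hat\pi_1)\,\eta$, where $x_a$ and $\hat x_b$ run over the coordinate (first-eigen)functions of $S^{n_1}$ and $\hat S^{\hat n_1}$. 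These $(n_1+1)(\hat n_1+1)$ sections are linearly independent and are Jacobi fields, so they span the $0$-eigenspace of $\mathcal L^f$, whose dimension was just computed to be $(n_1+1)(\hat n_1+1)$; hence $f$ is non-degenerate. This also recovers the index of the Clifford hypersurface computed in \cite{Per1}.

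I expect the only genuinely computational point in the whole argument to be this last step: verifying the normal-component identity for $X_{ab}$ and the resulting linear independence. Everything else is pure bookkeeping with the spectrum of the round sphere and with the formulas \eqref{eq-ind} and \eqref{eq-null} of Theorem \ref{thm-main}.
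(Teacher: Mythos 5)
Your proof is correct and follows essentially the same route as the paper: read $Ind(f)$ and $Null(f)$ off Theorem \ref{thm-main}, observe that the $f_1$- and $\hat f_1$-spectral contributions vanish for totally geodesic spheres, and compute $\mathfrak I_0$ and $\mathfrak N_0$ from the known Laplace spectrum $\{k(k+m-1)\}_{k\ge 0}$ of $S^m$. Two small points of comparison. First, you justify $\mathfrak I_1=\hat{\mathfrak I}_1=\mathfrak N_1=\hat{\mathfrak N}_1=0$ cleanly by noting the normal bundle of $f_1$ has rank $0$, so $Spec^{\mathcal L^{f_1}}=\emptyset$; the paper instead writes $\mu^{f_1}_1=\mu^{\hat f_1}_1=0$ (which is not quite compatible with an empty normal bundle) and reaches the same conclusion by a less tidy route -- your phrasing is arguably the more careful one. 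Second, for non-degeneracy the paper simply counts the dimension of the space of normal projections of ambient Killing fields as $\dim SO(n+2)-\dim SO(n_1+1)-\dim SO(\hat n_1+1)=(n_1+1)(\hat n_1+1)$ and matches it to $Null(f)$, whereas you go one step further and identify the Jacobi fields explicitly as $x_a\hat x_b\,\eta$ coming from the off-diagonal block of $\mathfrak{so}(n+2)$. Both are fine; the paper's version is more economical, yours makes the Killing-field correspondence concrete. No gaps.
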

\begin{proof}
Since $Ind(  f_1)= Ind(\hat f_1)=0$, by Theorem  \ref{thm-main}, we obtain that
\[Ind(f)=1+\mathfrak{I}_0+\mathfrak{I}_1+\hat{\mathfrak{I}}_1.\]
Recall that for the $m-$dimensional unit sphere (See e.g., page 160 of \cite{BGM})
all distinct elements of $Spec_{S^m}$ are
\begin{equation}\label{eq-Sm}\left\{0,k(k+m-1)|k\in\mathbb Z^+\right\},\end{equation}
 with {$\dim E^{S^m}_{\lambda_{1}}|_{\lambda_{1}=m}=m+1$} and  $\dim E^{S^m}_{\lambda_{\alpha}}|_{m<\lambda_{\alpha}\leq 2m}=0$. So \[\mathfrak{I}_0=n_1+1+\hat{n}_1+1=n+2, \ \mathfrak{I}_1=\hat{\mathfrak{I}}_1=0.\]
Summing up, we obtain that $Ind(f)=n+3$.

Since $Null(  f_1)= Null(\hat f_1)=0$, by Theorem  \ref{thm-main}, we  have
\[Null(f)=\mathfrak{N}_0+\mathfrak{N}_1+\hat{\mathfrak{N}}_1.\]
By \eqref{eq-Sm} and the fact that $\mu^{f_1}_1=\mu^{\hat{f}_1}_1=0$, we derive
\[\mathfrak{N}_0=(n_1+1)(\hat{n}_1+1),\ \mathfrak{N}_1=Null(f_1)=0 \hbox{ and }  \hat{\mathfrak{N}}_1=Null(\hat{f}_1)=0.\]
This finishes the computations.

Finally it is direct to compute the dimension of the space of Killing fields of $f$:
\[\dim SO(n+2)-\dim SO(n_1+1)-\dim SO(\hat n_1+1)=(n_1+1)(\hat{n}_1+1)=Null(f).\]

\end{proof}

\begin{corollary}
Let $f$ be the minimal product of a full minimal submanifold $  f_1: S^{n_1}\rightarrow S^{n_1+p_1}$ and the  totally geodesic sphere $\hat f_1:\hat S^{\hat{n}_1}\rightarrow\hat S^{\hat{n}_1}$. Then
\begin{enumerate}
\item
The index of $f$ satisfies
\begin{equation}
Ind(f)\geq Ind(f_1)+n+p_1+3,
\end{equation}
with equality holding if and only if $f_1$ is immersed by its first eigenfunctions and $\lambda^{f_1}_{n_1+p_1+2}\geq 2n_1$.
\item
The nullity of $f$ satisfies
\begin{equation}\label{eq-cor-null}
\ Null(f)\geq(n_1+1)(\hat{n}_1+1),
\end{equation}
with equality holding if and only if $f_1$ is the totally geodesic sphere $ f_1: S^{n_1}\rightarrow S^{n_1}$.
\end{enumerate} \end{corollary}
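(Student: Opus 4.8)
The plan is to read both inequalities off Theorem~\ref{thm-main} by specializing $\hat f_1$ to the totally geodesic sphere $\hat S^{\hat n_1}\hookrightarrow\hat S^{\hat n_1}$. For this $\hat f_1$ one has $\hat p_1=0$, $Ind(\hat f_1)=Null(\hat f_1)=0$, and its normal bundle has rank $0$, so $Spec^{\mathcal L^{\hat f_1}}=\emptyset$ and the cross terms $\hat{\mathfrak I}_1,\hat{\mathfrak N}_1$ of Theorem~\ref{thm-main} are $0$; moreover $n=n_1+\hat n_1$ and $p=p_1+1$. The two concrete inputs are: (i) $Spec_{\hat S^{\hat n_1}}=\{k(k+\hat n_1-1):k\ge 0\}$, whose first nonzero eigenvalue is $\hat n_1$ of multiplicity $\hat n_1+1$ and whose next eigenvalue $2(\hat n_1+1)$ exceeds $2\hat n_1$, so $\dim E^{\hat S^{\hat n_1}}_{\lambda}=0$ for $\hat n_1<\lambda\le 2\hat n_1$; and (ii) $\lambda^{f_1}_1\le n_1$ (Takahashi) together with the fact that, $f_1$ being full, the $n_1+p_1+1$ coordinate functions are linearly independent elements of $E^{f_1}_{n_1}$, whence $\dim E^{f_1}_{n_1}\ge n_1+p_1+1$.

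For (1), \eqref{eq-ind} becomes $Ind(f)=Ind(f_1)+1+\mathfrak I_0+\mathfrak I_1$. Splitting $\mathfrak I_0$ as in \eqref{eq-NN}, the $\hat f_1$-only summand is $\dim E^{\hat S^{\hat n_1}}_{\hat n_1}=\hat n_1+1$ by (i), the $f_1$-only summand is $\sum_{0<\lambda<2n_1}\dim E^{f_1}_{\lambda}\ge\dim E^{f_1}_{n_1}\ge n_1+p_1+1$ by (ii), and the mixed summand is $\ge 0$; hence $\mathfrak I_0\ge n+p_1+2$, and with $\mathfrak I_1\ge 0$ this gives $Ind(f)\ge Ind(f_1)+n+p_1+3$. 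For equality I would track sharpness of the three bounds: since $\lambda^{\hat f_1}_\beta/\hat n_1\ge 1$ for $\beta\ge 1$, the mixed summand vanishes exactly when $\lambda^{f_1}_1=n_1$; given this, the $f_1$-only summand equals $n_1+p_1+1$ exactly when $\dim E^{f_1}_{n_1}=n_1+p_1+1$ and $(n_1,2n_1)\cap Spec_{f_1}=\emptyset$, which is precisely $\lambda^{f_1}_{n_1+p_1+2}\ge 2n_1$; and $\mathfrak I_1=0$ exactly when no eigenvalue of $\mathcal L^{f_1}$ lies below $-n_1$. So equality holds iff these three conditions hold simultaneously, and to recover the stated form one must know the third is automatic once the first two hold — the only nontrivial point, discussed below.

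For (2), \eqref{eq-null} becomes $Null(f)=Null(f_1)+\mathfrak N_0+\mathfrak N_1$. In $\mathfrak N_0$ (cf.\ \eqref{eq-NN3}) the $\hat f_1$-only summand is $\dim E^{\hat S^{\hat n_1}}_{2\hat n_1}=0$ by (i); the mixed summand, since $\lambda^{\hat f_1}_\beta/\hat n_1\ge 1$ with equality only on the first nonzero eigenspace while $\lambda^{f_1}_\alpha>0$, equals $\dim E^{f_1}_{n_1}\cdot(\hat n_1+1)\ge(n_1+p_1+1)(\hat n_1+1)$; and the $f_1$-only summand is $\dim E^{f_1}_{2n_1}\ge 0$. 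Together with $\mathfrak N_1\ge 0$ and $Null(f_1)\ge 0$, this gives $Null(f)\ge(n_1+p_1+1)(\hat n_1+1)\ge(n_1+1)(\hat n_1+1)$. Equality forces $p_1=0$, so $f_1:S^{n_1}\to S^{n_1}$ is full minimal, hence totally geodesic, with rank-$0$ normal bundle and $Spec^{\mathcal L^{f_1}}=\emptyset$; it also forces $Null(f_1)=0$, $\dim E^{f_1}_{2n_1}=0$, $\dim E^{f_1}_{n_1}=n_1+1$ and $\mathfrak N_1=0$, all of which do hold for the totally geodesic sphere, while the converse (totally geodesic $f_1$ gives equality) is consistent with Corollary~\ref{cor-CT}. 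Hence equality in (2) holds iff $f_1$ is the totally geodesic $S^{n_1}\to S^{n_1}$.

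The main obstacle is the equality clause of (1): one must show that ``$f_1$ immersed by its first eigenfunctions'' together with $\lambda^{f_1}_{n_1+p_1+2}\ge 2n_1$ force $\mu^{f_1}_1=-n_1$, i.e.\ $\mathfrak I_1=0$. (Note this really does need the hypothesis: for the Clifford torus $S^1\times S^1\subset S^3$ the first two conditions hold yet $\mu^{f_1}_1=-2n_1$, and indeed its base is not a sphere.) This is exactly where the hypothesis that the domain of $f_1$ is a sphere enters: combined with $\lambda^{f_1}_1=n_1$ and $\dim E^{f_1}_{n_1}=n_1+p_1+1$, rigidity of minimal spheres in spheres pins $f_1$ down to the totally geodesic $S^{n_1}\to S^{n_1}$ (so $p_1=0$), whose Jacobi operator has empty spectrum and hence $\mathfrak I_1=0$ trivially; for $n_1=2$ this is Calabi's theorem. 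Everything else — the round-sphere spectrum, the elementary sign estimates on the counting functions of Theorem~\ref{thm-main}, the sharpness analysis, and the invocation of Lemma~\ref{le-per}/Theorem~5.1.1 of \cite{Simons} for the characterization of the totally geodesic sphere — is routine bookkeeping.
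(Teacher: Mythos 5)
Your proof of the two inequalities and your treatment of part (2) are correct and follow essentially the same bookkeeping through Theorem~\ref{thm-main} as the paper: the same decomposition of $\mathfrak{I}_0$ and $\mathfrak{N}_0$, the same input from the round-sphere spectrum, and the same use of the linear independence of the coordinate functions of a full immersion. So far, so good.

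The issue is the equality clause of (1). You correctly identify the missing step: the stated criterion only characterizes when $\mathfrak{I}_0=n+p_1+2$, while equality in \eqref{eq-ind} additionally requires $\mathfrak{I}_1=0$, i.e.\ $\mu^{f_1}_1\geq -n_1$. (The paper's own proof does not really treat this point either: it writes $\mathfrak{I}_1=Ind(f_1)$, which is not a correct identity in general --- taken literally it would yield $Ind(f)\geq 2\,Ind(f_1)+n+p_1+3$ and force $Ind(f_1)=0$ at equality --- and the displayed lower bound $Ind(f)\geq Ind(f_1)+n+p_1+2$ in the proof disagrees with the corollary by one.) Unfortunately, your proposed fix does not repair the gap. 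You assert that $\lambda^{f_1}_1=n_1$ and $\dim E^{f_1}_{n_1}=n_1+p_1+1$, with domain a sphere, force $f_1$ to be the totally geodesic $S^{n_1}\hookrightarrow S^{n_1}$. For $n_1=2$ this does follow, by combining Hersch's inequality $\lambda_1\cdot\mathrm{Area}\leq 8\pi$ on $S^2$ with the Li--Yau bound $\mathrm{Area}\geq 4\pi$ for minimal surfaces in spheres, which pins $\mathrm{Area}=4\pi$ and hence $f_1$ totally geodesic. But Hersch-type extremality fails on $S^{n_1}$ for $n_1\geq 3$, and neither the paper nor its references establishes the rigidity you invoke in higher dimensions. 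So your equality argument for (1) is incomplete, and as written the gap remains.
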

\begin{proof}
By Theorem  \ref{thm-main}, we obtain that
\[Ind(f)=Ind(f_1)+1+\mathfrak{I}_0+\mathfrak{I}_1+\hat{\mathfrak{I}}_1.\]
Similarly, we have
\[\mathfrak{I}_1 =Ind(f_1),\ \hat{\mathfrak{I}}_1=0.\]
And
\[\begin{split}
\mathfrak{I}_0=&(\hat{n}_1+1)\cdot\sum_{\lambda^{f_1}_{\alpha}<n_1}\dim E^{f_1}_{\lambda^{f_1}_{\alpha}}+\sum_{\lambda^{f_1}_{\alpha}<2n_1}\dim E^{f_1}_{\lambda^{f_1}_{\alpha}}+\hat{n}_1+1\\
&\geq n_1+p_1+1+\hat{n}_1+1\\
&=n+p_1+2.\\
\end{split}\]
So $Ind(f)\geq Ind(f_1)+n+p_1+2$, with equality holding if and only if
both $\sum_{\lambda^{f_1}_{\alpha}<n_1}\dim E^{f_1}_{\lambda^{f_1}_{\alpha}}=0 $ and $\sum_{\lambda^{f_1}_{\alpha}<2n_1}\dim E^{f_1}_{\lambda^{f_1}_{\alpha}}=n_1+p_1+1$.
That is, $f_1$ is immersed by its first eigenfunctions and $\lambda^{f_1}_{n_1+p_1+2}\geq 2n_1$.

 By Theorem  \ref{thm-main}, we obtain
\[Null(f)=Null(  f_1)+\mathfrak{N}_0+\mathfrak{N}_1+\hat{\mathfrak{N}}_1.\]
By   \eqref{eq-Sm} and the fact that $\mu^{\hat{f}_1}_1=0$, we get
\[\mathfrak{N}_0\geq (n_1+1)(\hat{n}_1+1),\ \mathfrak{N}_1\geq Null(f_1)\geq 0 \hbox{ and }  \hat{\mathfrak{N}}_1=Null(\hat{f}_1)=0.\]
Therefore \eqref{eq-cor-null} holds. And the equality holds if and only if $Null(f_1)=0$, which means exactly that \cite{Simons} $f_1$ is the totally geodesic sphere $ f_1: S^{n_1}\rightarrow S^{n_1}$.
\end{proof}

\subsection{Index $\&$ nullity of the Clifford minimal submanifolds}
In this subsection we compute the index $\&$ nullity of the Clifford minimal submanifolds.

\subsubsection{Index}
\begin{proposition}\label{prop-CT}
The Clifford minimal submanifold \eqref{eq-CT-1}
$f=\left(\sqrt{\frac{n_1}{n}}f_1,\cdots,\sqrt{\frac{n_k}{n}}f_k\right):M^n=S^{n_1}\times\cdots\times S^{n_k}\rightarrow S^{n+k-1}$  has
\begin{equation}\label{eq-ind-CT}
Ind(f)=(k-1)\sum_{j=1}^kn_j+\sum_{l=1}^{k-1}(2l+1)=(k-1)(n+k+1).
\end{equation}
  \end{proposition}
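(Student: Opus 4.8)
The plan is to identify the spectrum of the Jacobi operator $\mathcal{L}^f$ explicitly by working in the Euclidean model of the Clifford embedding, rather than iterating Theorem \ref{thm-main}. Write $f(x_1,\dots,x_k)=\big(\sqrt{n_1/n}\,x_1,\dots,\sqrt{n_k/n}\,x_k\big)$ with $x_j\in S^{n_j}\subset\R^{n_j+1}$, so that $M=S^{n_1}\times\cdots\times S^{n_k}$ lies in $S^{n+k-1}\subset\R^{n+k}=\bigoplus_{j=1}^k\R^{n_j+1}$. At $p=(x_1,\dots,x_k)$ the normal space of $f$ inside $\R^{n+k}$ is the $k$-dimensional space $\{(a_1x_1,\dots,a_kx_k):a\in\R^k\}$, which contains the position vector $f(p)$ (take $a_j=\sqrt{n_j/n}$); hence the normal space of $f$ inside $S^{n+k-1}$ is the $(k-1)$-dimensional subspace cut out by $\sum_j a_j\sqrt{n_j}=0$. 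Fixing an orthonormal basis $\mathbf a^{(2)},\dots,\mathbf a^{(k)}$ of $W:=\{a\in\R^k:\sum_j a_j\sqrt{n_j}=0\}$, the sections $N_l(p):=\big(a^{(l)}_1x_1,\dots,a^{(l)}_kx_k\big)$, $l=2,\dots,k$, form a global orthonormal frame of $T^\perp M$.

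First I would verify that this frame is parallel: the Euclidean derivative of $N_l$ along a tangent vector $X=(X_1,\dots,X_k)$ (with $X_j\perp x_j$) equals $\big(a^{(l)}_1X_1,\dots,a^{(l)}_kX_k\big)$, which is itself tangent to $f(M)$ and orthogonal to $f(p)$, so $\nabla^\perp N_l\equiv 0$ (this in passing re-proves the flat-normal-bundle claim of Corollary \ref{cor-CL}). Next I would compute $\tilde{\mathcal A}^f$ from the Gauss formula of each round factor $S^{n_j}\subset\R^{n_j+1}$: the shape operator $A^f_{N_l}$ acts on the $j$-th tangent factor as the scalar $-\sqrt{n/n_j}\,a^{(l)}_j$, whence $\langle\tilde{\mathcal A}^f(N_l),N_m\rangle=\operatorname{tr}(A^f_{N_l}A^f_{N_m})=\sum_j n_j\cdot\tfrac{n}{n_j}a^{(l)}_j a^{(m)}_j=n\,\langle\mathbf a^{(l)},\mathbf a^{(m)}\rangle=n\,\delta_{lm}$, i.e.\ $\tilde{\mathcal A}^f=n\cdot\mathrm{Id}$ on $T^\perp M$. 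Since the $N_l$ are parallel, this gives $\mathcal{L}^f(vN_l)=(\Delta^f v+2nv)N_l$ for every $v\in C^\infty(M)$; thus $\mathcal{L}^f$ is diagonal in the frame $\{N_l\}$, and its spectrum consists of the numbers $\lambda-2n$ with $\lambda$ an eigenvalue of $-\Delta^f$, each occurring $(k-1)$ times the multiplicity of $\lambda$. In particular
\[
Ind(f)=(k-1)\cdot\sharp\{\,\lambda\ \text{eigenvalue of}\ -\Delta^f\ :\ \lambda<2n\,\},
\]
the count taken with multiplicity and including the eigenvalue $0$.

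It then remains to count the eigenvalues of $-\Delta^f$ below $2n$. Iterating \eqref{eq-Lap} gives $\Delta^f=\sum_{j=1}^k\tfrac{n}{n_j}\Delta^{S^{n_j}}$ (standard Laplacian of the unit $S^{n_j}$), so the eigenvalues are $\Lambda_{(k_1,\dots,k_k)}=\sum_j\tfrac{n}{n_j}k_j(k_j+n_j-1)$ with $k_j\in\mathbb Z_{\geq0}$ and multiplicity $\prod_j\dim\mathcal H_{k_j}(S^{n_j})$. Since $\tfrac{1}{n_j}k_j(k_j+n_j-1)$ is $0$ for $k_j=0$, is $1$ for $k_j=1$, and exceeds $2$ for every $k_j\geq 2$, the inequality $\Lambda<2n$ holds precisely when each $k_j\in\{0,1\}$ and at most one $k_j$ equals $1$. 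This contributes the multi-index $(0,\dots,0)$, of multiplicity $1$ (eigenvalue $0$), together with the $k$ multi-indices having a single $1$ in slot $j$, each of multiplicity $\dim\mathcal H_1(S^{n_j})=n_j+1$ (eigenvalue $n$); the total with multiplicity is $1+\sum_{j=1}^k(n_j+1)=n+k+1$. Hence $Ind(f)=(k-1)(n+k+1)$, and $(k-1)(n+k+1)=(k-1)\sum_j n_j+(k-1)(k+1)=(k-1)\sum_j n_j+\sum_{l=1}^{k-1}(2l+1)$.

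The only non-routine points are the two geometric computations — parallelism of $\{N_l\}$ and $\tilde{\mathcal A}^f=n\,\mathrm{Id}$ — after which the argument reduces to elementary eigenvalue bookkeeping for $-\Delta^f$; I expect those two computations to be the main (though modest) obstacle. As an alternative one can dispense with the Euclidean model and derive the same diagonalization of $\mathcal{L}^f$ by induction on $k$ from Theorem \ref{thm-main}, the decomposition \eqref{eq-decomp} and Lemma \ref{lemma-m}, carrying along a strengthened inductive hypothesis on the full spectra of the intermediate Clifford products; the very same count, applied to the eigenvalue $\lambda=2n$ instead, then also yields $Null(f)=(k-1)\sum_{1\le i<j\le k}(n_i+1)(n_j+1)$.
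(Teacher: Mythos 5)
Your proof is correct and takes a genuinely different route from the paper's. The paper first proves a general spectral decomposition for the Jacobi operator of a two-factor minimal product (Theorem \ref{thm-main}, via Lemmas \ref{lemma-m} and \ref{lemma-m2}), refines it under a spectral-gap hypothesis (Lemma \ref{lemma-C}), and then runs an induction on the number of factors $k$, carrying the gap hypothesis along through each intermediate Clifford product $f^{(l)}$. You bypass both the two-factor machinery and the induction by exploiting the rigidity of the Clifford embedding directly: the linear slice $W=\{a\in\R^k:\sum_j a_j\sqrt{n_j}=0\}$ yields a \emph{global} orthonormal frame $N_2,\dots,N_k$ of $T^\perp M$ which you check is parallel (since $dN_l(X)=(a^{(l)}_1X_1,\dots,a^{(l)}_kX_k)$ lands in $f_*(T_pM)$), and the computation $\operatorname{tr}(A^f_{N_l}A^f_{N_m})=\sum_j n_j\cdot\frac{n}{n_j}a^{(l)}_ja^{(m)}_j=n\,\delta_{lm}$ shows $\tilde{\mathcal A}^f=n\,\mathrm{Id}$. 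These two facts diagonalize $\mathcal L^f$ as $(k-1)$ scalar copies of $\Delta^f+2n$, so $\operatorname{Ind}(f)=(k-1)\cdot\sharp\{\lambda\in\operatorname{Spec}(-\Delta^f):\lambda<2n\}$ (with multiplicity, including $0$); the lattice count on $\prod_jS^{n_j}$ with the metric scaled by $n_j/n$ on the $j$-th factor correctly yields $1+\sum_j(n_j+1)=n+k+1$, since $\frac{1}{n_j}k_j(k_j+n_j-1)$ equals $0,1$ for $k_j=0,1$ and strictly exceeds $2$ for $k_j\geq 2$. I verified the shape operator scalar $-\sqrt{n/n_j}\,a^{(l)}_j$ accounts for the induced metric scaling; the computation is sound. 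What the paper's inductive framework buys is generality — Theorem \ref{thm-main} applies to arbitrary minimal products, where $\tilde{\mathcal A}^f$ is not a multiple of the identity and the normal bundle admits no global parallel frame; what your approach buys is a shorter, more transparent argument for the Clifford case, and, as you observe, the nullity formula of Proposition \ref{prop-CT-N} follows from the same count at $\lambda=2n$ (two slots with $k_j=1$, contributing $\sum_{i<j}(n_i+1)(n_j+1)$, again times $k-1$).
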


To prove Proposition \ref{prop-CT}, we need the following Lemma {which itself is of independent interest}.

\begin{lemma}\label{lemma-C}
Assume that $f_1$ and $\hat{f}_1$ are immersed by their first eigenfunctions as in Theorem \ref{thm-main}. Assume furthermore that
 the spectrums of $f_1$ and $\hat{f}_1$ satisfy the following
\begin{enumerate}
	\item $\mu^{f_1}_{\alpha_1}<\mu^{f_1}_{\alpha_1+1}=-n_1$. Here we set $\alpha_1=0$ if there exists no $\mu^{f_1}_{\alpha_1}<-n_1$;
	\item $\mu^{\hat{f}_1}_{\beta_1}<\mu^{\hat{f}_1}_{\beta_1+1}=-\hat{n}_1$. Here we set $\beta_1=0$ if there exists no  $\mu^{\hat{f}_1}_{\beta_1}<-\hat{n}_1$;
\item $\lambda^{f_1}_{\alpha}\geq 2n_1$ for all $\lambda^{f_1}_{\alpha}>n_1$;
\item $\lambda^{\hat{f}_1}_{\alpha}\geq 2\hat{n}_1$ for all $\lambda^{\hat{f}_1}_{\alpha}>\hat{n}_1$.
\end{enumerate}
Then $f$ satisfies
\begin{enumerate}
	\item $\mu^{f}_{\alpha_1+\beta_1+1}<\mu^{f}_{\alpha_1+\beta_1+2}=-n$;
	\item $\lambda^{f}_{\alpha}\geq 2n$ for all $\lambda^{f}_{\alpha}>n$.
\end{enumerate}
Moreover, we have
\begin{equation}\label{eq-ind5}
Ind(f)=Inf(f_1)+Ind(\hat{f}_1)+1+(1+\beta_1)\cdot \dim E^{f_1}_{\lambda^{f_{1}}_1}+(1+\alpha_1)\cdot \dim E^{\hat{f}_1}_{\lambda^{\hat{f}_{1}}_1}
.\end{equation}
\end{lemma}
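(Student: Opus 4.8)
The plan is to feed the hypotheses on the spectra of $f_1$ and $\hat f_1$ into the explicit description of $Spec^{\mathcal{L}^f}$ and of the eigenspaces $\mathfrak{E}_{\mu^f_\alpha}$ furnished by Lemma \ref{lemma-m2}, together with the rescaled Laplace-spectrum formula $\lambda^f = \frac{n}{n_1}\lambda^{f_1}+\frac{n}{\hat n_1}\lambda^{\hat f_1}$ from \eqref{eq-Lap}. First I would record that, by \eqref{eq-E11}--\eqref{eq-E13}, the Jacobi eigenvalues of $f$ are precisely the numbers of the three forms $n\left(\frac{\lambda^{f_1}_\beta}{n_1}+\frac{\lambda^{\hat f_1}_{\hat\beta}}{\hat n_1}-2\right)$ (including $\beta$ or $\hat\beta$ equal to $0$, i.e. the constant eigenfunction), $n\left(\frac{\mu^{f_1}_\gamma}{n_1}+\frac{\lambda^{\hat f_1}_{\hat\beta}}{\hat n_1}\right)$ (with $\hat\beta$ possibly $0$), and $n\left(\frac{\mu^{\hat f_1}_{\hat\gamma}}{\hat n_1}+\frac{\lambda^{f_1}_\beta}{n_1}\right)$ (with $\beta$ possibly $0$). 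Then I would examine which of these can be $<0$, $=-n$, or lie strictly between $-2n$ and $0$.

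Next I would verify the two structural claims about $f$. For claim (2), $\lambda^f>n$ forces, using $\lambda^{f_1}_\beta\ge n_1$ and $\lambda^{\hat f_1}_{\hat\beta}\ge\hat n_1$ together with hypotheses (3) and (4), that at least one of $\lambda^{f_1}_\beta,\lambda^{\hat f_1}_{\hat\beta}$ jumps to $\ge 2n_1$ resp. $\ge 2\hat n_1$; a short case check on $\frac{n}{n_1}\lambda^{f_1}_\beta+\frac{n}{\hat n_1}\lambda^{\hat f_1}_{\hat\beta}$ then gives $\lambda^f\ge 2n$. For claim (1), I would show the $-n$ eigenvalue of $\mathcal{L}^f$ arises exactly from the type-2 and type-3 contributions with $\mu^{f_1}_\gamma=-n_1,\lambda^{\hat f_1}_{\hat\beta}=\hat n_1$ and $\mu^{\hat f_1}_{\hat\gamma}=-\hat n_1,\lambda^{f_1}_\beta=n_1$ (the type-1 contribution gives $-n$ only if some $\frac{\lambda^{f_1}_\beta}{n_1}+\frac{\lambda^{\hat f_1}_{\hat\beta}}{\hat n_1}=1$, impossible since each summand is $0$ or $\ge 1$, and not both $0$), that no Jacobi eigenvalue lies strictly between $-n$ and $0$ except possibly $-2n<\mu<-n$ coming from $\mu^{f_1}_\gamma<-n_1$ or $\mu^{\hat f_1}_{\hat\gamma}<-\hat n_1$ paired with the top Laplace eigenvalue, and hence that below $-n$ there are exactly $\alpha_1$ eigenvalues from the first beginning submanifold and $\beta_1$ from the second (each still strictly below $-n$ after the pairing with $\lambda_1$, because $\frac{n}{n_1}\mu^{f_1}_\gamma+\frac{n}{\hat n_1}\hat n_1=\frac{n}{n_1}\mu^{f_1}_\gamma+n<0$ when $\mu^{f_1}_\gamma<-n_1$, etc., need a sign check), giving the index shift $\alpha_1+\beta_1+1$.

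Finally, for the index formula \eqref{eq-ind5} I would simply count dimensions in \eqref{eq-ind} (Theorem \ref{thm-main}): here $\mathfrak{I}_0$ collapses — since $\lambda^{f_1}_\alpha<2n_1$ only for $\lambda^{f_1}_\alpha=n_1$ and similarly for $\hat f_1$, the double-sum term contributes $\dim E^{f_1}_{\lambda^{f_1}_1}\cdot\dim E^{\hat f_1}_{\lambda^{\hat f_1}_1}$, wait — more carefully, the condition in \eqref{eq-NN} is $\frac{\lambda^{f_1}_\alpha}{n_1}+\frac{\lambda^{\hat f_1}_\beta}{\hat n_1}<2$ with $\alpha,\beta\ge 1$, which by immersion-by-first-eigenfunctions is never satisfied, so the first term of $\mathfrak{I}_0$ vanishes and the remaining two give $\dim E^{f_1}_{\lambda^{f_1}_1}+\dim E^{\hat f_1}_{\lambda^{\hat f_1}_1}$. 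Meanwhile $\mathfrak{I}_1$ counts pairs $(\gamma,\beta)$ with $\frac{\mu^{f_1}_\gamma}{n_1}+\frac{\lambda^{\hat f_1}_\beta}{\hat n_1}<0$, $\beta\ge1$; since $\lambda^{\hat f_1}_\beta\ge\hat n_1$ this needs $\mu^{f_1}_\gamma<-n_1$, i.e. $\gamma\le\alpha_1$, and then only $\lambda^{\hat f_1}_\beta=\hat n_1$ works, yielding $\mathfrak{I}_1=\big(\sum_{\gamma\le\alpha_1}\dim\mathfrak{E}_{\mu^{f_1}_\gamma}\big)\cdot\dim E^{\hat f_1}_{\lambda^{\hat f_1}_1}$, and symmetrically for $\hat{\mathfrak{I}}_1$. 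Assembling these with $Ind(f)=Ind(f_1)+Ind(\hat f_1)+1+\mathfrak{I}_0+\mathfrak{I}_1+\hat{\mathfrak{I}}_1$ and reorganizing (noting $1+\beta_1$ and $1+\alpha_1$ arise by grouping the $\mathfrak{I}_0$ term with the $\mathfrak{I}_1,\hat{\mathfrak{I}}_1$ terms, using the definitions of $\alpha_1,\beta_1$) gives \eqref{eq-ind5}. The main obstacle I anticipate is the bookkeeping in claim (1): one must carefully rule out spurious Jacobi eigenvalues in the open interval $(-2n,-n)\cup(-n,0)$ and confirm that pairing a subcritical Jacobi eigenvalue of a factor with the first Laplace eigenvalue of the other factor keeps it strictly below $-n$, so that the multiplicities $\alpha_1$ and $\beta_1$ transfer cleanly without collisions at $-n$ itself.
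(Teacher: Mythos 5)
Your overall plan is the paper's plan: use Lemma~\ref{lemma-m2} to read off $Spec^{\mathcal{L}^f}$, verify the two structural claims about the Jacobi and Laplace spectra, and then evaluate $\mathfrak{I}_0,\mathfrak{I}_1,\hat{\mathfrak{I}}_1$ from Theorem~\ref{thm-main}.  Your computation of $\mathfrak{I}_0,\mathfrak{I}_1,\hat{\mathfrak{I}}_1$ (and hence of \eqref{eq-ind5}) is essentially correct — indeed you identify $\mathfrak{I}_1 = \alpha_1\cdot\dim E^{\hat f_1}_{\lambda^{\hat f_1}_1}$ and $\hat{\mathfrak{I}}_1=\beta_1\cdot\dim E^{f_1}_{\lambda^{f_1}_1}$, which is actually the correct reading of \eqref{eq-NN1} (the paper writes them swapped, though this is harmless since they are added).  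Your verification of conclusion (2) on the Laplace spectrum is also sound.

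However, your treatment of conclusion (1) contains genuine arithmetic errors.  You assert that the $-n$ eigenvalue of $\mathcal{L}^f$ arises from $\mu^{f_1}_\gamma=-n_1$ paired with $\lambda^{\hat f_1}_{\hat\beta}=\hat n_1$; but $n\bigl(\tfrac{-n_1}{n_1}+\tfrac{\hat n_1}{\hat n_1}\bigr)=0$, not $-n$.  You also claim that the $\eta$-sector (your ``type-1'') cannot produce $-n$ because ``each summand is $0$ or $\geq1$, and not both $0$'' — but $0+1=1$ is precisely the case, so $(\lambda^{f_1}_\beta,\lambda^{\hat f_1}_{\hat\beta})=(n_1,0)$ or $(0,\hat n_1)$ \emph{does} give $\mu^f=-n$ from that sector.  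Finally, you seek the $\alpha_1+\beta_1$ eigenvalues below $-n$ among pairings with $\lambda_1^{\hat f_1}=\hat n_1$, but your own displayed inequality $\tfrac{n}{n_1}\mu^{f_1}_\gamma+n<0$ only yields $\mu^f<0$, not $\mu^f<-n$, and you flag this (``need a sign check'') without resolving it.  The correct statement, which the paper uses, is that the $\alpha_1+\beta_1+1$ eigenvalues strictly below $-n$ come entirely from pairings with the \emph{zero} Laplace eigenvalue: $-2n$ in the $\eta$-sector from $(0,0)$, $\tfrac{n}{n_1}\mu^{f_1}_\gamma$ with $\mu^{f_1}_\gamma<-n_1$ and $\hat\beta=0$ giving $\alpha_1$ eigenvalues, and symmetrically $\beta_1$ eigenvalues from the other factor.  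Your final index formula survives because it is computed directly from $\mathfrak{I}_0,\mathfrak{I}_1,\hat{\mathfrak{I}}_1$ rather than from conclusion (1), but conclusion (1) is needed for the inductive application in Proposition~\ref{prop-CT}, so the sign bookkeeping there must be carried out correctly.
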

\begin{proof}
	By \eqref{eq-E11}, \eqref{eq-E12} and \eqref{eq-E13}, for
	$\mu^f_{\alpha}<-n$, the only possibilities are the following
	\[\mu^f_{\alpha}=0+0-2n,\ \ \mu^f_{\alpha}=\frac{n}{n_1}\mu^{f_1}_{\gamma}+0<-n,\ \
\hbox{ or }~ \mu^{f}_{\alpha}=\frac{n}{\hat{n}_1}\mu^{\hat{f}_1}_{\gamma}+0<-n.\]
By assumption (1) and (2), there are exactly $(1+\alpha_1+\beta_1)$ eigenvalues of $Spec^{\mathcal L}$ being less than $-n$. The argument $\mu^{f}_{\alpha_1+\beta_1+2}=-n$  comes from the fact that $-n\in Spec^{\mathcal L^f}$ when $f$ has co-dimension at least $1$ (See \cite{Simons}).

The second argument comes from the fact that
\[\min\left\{\lambda^f_{\alpha}|\lambda^{f}_{\alpha}>n\right\}=\min\left\{\left.2n,\ \frac{n}{n_1}\lambda^{f_1}_{\alpha},\ \frac{n}{\hat{n}_1}\lambda^{\hat{f}_1}_{\beta} \right|\lambda^{f_1}_{\alpha}>n_1,\ \lambda^{\hat{f}_1}_{\beta} >\hat{n}_1\right\}.\]

With the above estimates, it is direct to compute
\[\mathfrak{I}_0=\dim E^{f_1}_{\lambda^{f_{1}}_1}+\dim E^{\hat{f}_1}_{\lambda^{\hat{f}_{1}}_1},\ \mathfrak{I}_1=\beta_1\cdot\dim E^{f_1}_{\lambda^{f_{1}}_1}
\hbox{ and } \hat{\mathfrak{I}}_1=\alpha_1\cdot \dim E^{\hat{f}_1}_{\lambda^{\hat{f}_{1}}_1}.\]
Then \eqref{eq-ind5} follows by substituting the above equations into \eqref{eq-ind}.
	\end{proof}

\
\\{\em Proof
	of Proposition \ref{prop-CT}.} We prove it by induction.	
	Consider \[f^{(1)}=\left(\sqrt{\frac{n_1}{n_1+n_2}}f_1,\sqrt{\frac{n_2}{n_1+n_2}}f_2\right).\] One checks easily that $f_1$ and $f_2$ satisfy the assumptions in Lemma \ref{lemma-C}.  So we have
	$\mu^{f^{(1)}}_{1}<\mu^{f^{(1)}}_{2}=-(n_1+n_2)=-n^{{(1)}}$, i.e., $\alpha_1(f^{(1)})=1$.
	We also have  $\lambda^{f^{(1)}}_{\alpha}\geq 2n^{{(1)}}$ for all $\lambda^{f^{(1)}}_{\alpha}>n^{{(1)}}$.  So $f^{(1)}$ satisfies
the assumptions (1) and (3) of Lemma \ref{lemma-C}.
Moreover, \eqref{eq-ind-CT}  holds for $f^{(1)}$.
	
Now we assume that $f^{(l-1)}=\left(\sqrt{\frac{n_1}{n^{(l-1)}}}f_1,\cdots,\sqrt{\frac{n_l}{n^{(l-1)}}}f_l\right)$, with $n^{(l-1)}=n_1+\cdots+n_l$, satisfies
the assumptions (1) and (3) of Lemma \ref{lemma-C} with $\alpha_1(f^{(l-1)})=l-1$
and  index
$Ind(f^{(l-1)})=(l-1)(l+1+\sum_{j=1}^{l}n_j).$ Then
\[f^{(l)}=\left(\sqrt{\frac{n_1}{n^{(l)}}}f_1,\cdots,\sqrt{\frac{n_l}{n^{(l)}}}f_l,\sqrt{\frac{n_{l+1}}{n^{(l)}}}f_{l+1}\right),\ n^{(l)}=n_1+\cdots+n_{l+1},\]  is the minimal product of $f^{(l-1)}$ and $f_{l+1}=S^{l+1}$.
 Since \[ \dim E^{f^{(l-1)}}_{\lambda^{f^{(l-1)}}_1}=l+\sum_{j=1}^{l}n_j \hbox{ and }
 \dim E^{{f}_{l+1}}_{\lambda^{{f}_{l+1}}_1}=n_{l+1}+1,\]
we obtain by Lemma \ref{lemma-C} that
 \[\begin{split}
 Ind(f^{(l)})&=(l-1)(l+1+\sum_{j=1}^{l}n_j)+0+1+(1+0)(l+\sum_{j=1}^{l}n_j)+(1+l-1)(n_{l+1}+1)\\
 &=l(l+2+  \sum_{j=1}^{l+1}n_j).
 \end{split}\]
By Lemma \ref{lemma-C}, we also have that $f^{(l)}$ satisfies the assumptions (1) and (3) in Lemma \ref{lemma-C} with $\alpha_1(f^{(l)})=l$.
Then we finish the induction.
\hfill$\Box$
\subsubsection{Nullity}\label{null}
Throughout Section \ref{null}, we denote by $f_j$ the totally geodesic sphere $f_j:S^{n_j}\rightarrow S^{n_j}$ for any $j=1,\cdots,k$.
\begin{proposition}\label{prop-CT-N}
The Clifford minimal submanifold
	$f=\left(\sqrt{\frac{n_1}{n}}f_1,\cdots,\sqrt{\frac{n_k}{n}}f_k\right):M^n=S^{n_1}\times\cdots\times S^{n_k}\rightarrow S^{n+k-1}$, $n=\sum_{j=1}^{k}n_j$, has
	\begin{equation}\label{eq-Null-CT}
Null(f)=(k-1)\sum_{1\leq i<j\leq k}(n_i+1)(n_j+1)\geq \sum_{1\leq i<j\leq k}(n_i+1)(n_j+1),\end{equation}
 with equality holding if and only if $k=2$. In particular, $f$ is degenerate when $k\geq3$, i.e., there exists Jacobi fields of $f$ which are not Killing fields of $f$.
 \end{proposition}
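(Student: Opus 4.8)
The plan is to identify the Jacobi operator $\Lf$ of the Clifford minimal submanifold completely, which reduces the nullity to the dimension of a single eigenspace of $\Delta^{f}$. Realizing $f=f^{(k-1)}$ as the iterated minimal product of the totally geodesic spheres $f_1,\dots,f_k$, exactly as in the proof of Proposition~\ref{prop-CT}, I claim that the normal bundle $T^\perp M$ is trivial of rank $k-1$, globally framed by the $k-1$ parallel unit normal fields $\eta^{(1)},\dots,\eta^{(k-1)}$ produced at the successive stages of the iteration, and that $\Lf$ preserves this splitting with
\[
\Lf\!\left(v\,\eta^{(i)}\right)=\left(\Delta^{f}v+2nv\right)\eta^{(i)},\qquad v\in C^{\infty}(M),\quad 1\leq i\leq k-1 .
\]
Granting this, the eigenvalues of $\Lf$ are precisely $\lambda-2n$ with $\lambda$ running over the eigenvalues of $\Delta^{f}$ (including $0$), and each eigenspace of $\Lf$ consists of $k-1$ copies of the corresponding eigenspace of $\Delta^{f}$; in particular $Null(f)=(k-1)\,\dim E^{f}_{2n}$, where $E^{f}_{2n}$ denotes the $\Delta^f$-eigenspace for the eigenvalue $2n$.

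To prove the structural claim I would induct on $k$. For $k=2$ both factors are totally geodesic spheres with no normal directions, so $T^\perp M=\R\eta^{(1)}$ and the displayed identity is exactly Lemma~\ref{lemma-m}(2). For the inductive step, $f^{(k-1)}$ is the minimal product of $f^{(k-2)}$ and $f_k=S^{n_k}$; as $f_k$ is totally geodesic its normal bundle contributes nothing, so by \eqref{eq-decomp} the bundle $T^\perp M$ is spanned by $\eta^{(k-1)}$ together with the pullbacks of $\eta^{(1)},\dots,\eta^{(k-2)}$, all of which stay parallel by \eqref{eq-eta} and \eqref{eq-nabla}. On $\R\eta^{(k-1)}$ the identity is Lemma~\ref{lemma-m}(2), while on $\R\eta^{(i)}$ with $i\leq k-2$, Lemma~\ref{lemma-m}(3) gives
\[
\Lf\!\left(v\,\eta^{(i)}\right)=\frac{n}{n^{(k-2)}}\,\mathcal{L}^{f^{(k-2)}}\!\left(v\,\eta^{(i)}\right)+\frac{n}{n_k}\left(\Delta^{S^{n_k}}v\right)\eta^{(i)},\qquad i\leq k-2 ,
\]
and inserting the inductive hypothesis $\mathcal{L}^{f^{(k-2)}}(v\eta^{(i)})=(\Delta^{f^{(k-2)}}v+2n^{(k-2)}v)\eta^{(i)}$ together with the iterated form $\Delta^{f}=\sum_{j=1}^{k}\frac{n}{n_j}\Delta^{f_j}$ of \eqref{eq-Lap} collapses the right-hand side to $(\Delta^{f}v+2nv)\eta^{(i)}$, completing the induction.

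Next I would compute $\dim E^{f}_{2n}$. By \eqref{eq-Lap} iterated, $\Delta^{f}=\sum_{j=1}^{k}\frac{n}{n_j}\Delta^{f_j}$ with $f_j=S^{n_j}$, so Proposition~\ref{prop-spec} shows the eigenvalues of $\Delta^{f}$ are $\sum_{j=1}^{k}\frac{n}{n_j}m_j(m_j+n_j-1)$ over $(m_1,\dots,m_k)\in\mathbb{Z}_{\geq 0}^{k}$, with the corresponding product of sphere multiplicities (cf.\ \eqref{eq-Sm}). Since $\frac{n}{n_j}m(m+n_j-1)$ equals $0$ for $m=0$, equals $n$ for $m=1$, and exceeds $2n$ for $m\geq 2$, the value $2n$ is attained exactly by those tuples with $m_i=m_j=1$ for a single pair $i<j$ and all remaining entries $0$; summing the multiplicities $(n_i+1)(n_j+1)$ gives $\dim E^{f}_{2n}=\sum_{1\leq i<j\leq k}(n_i+1)(n_j+1)$, whence $Null(f)=(k-1)\sum_{1\leq i<j\leq k}(n_i+1)(n_j+1)$. (The same count with $2n$ replaced by the eigenvalues of $\Delta^f$ less than $2n$, namely $0$ and $n$, recovers $Ind(f)=(k-1)\big(1+(n+k)\big)=(k-1)(n+k+1)$ of Proposition~\ref{prop-CT}.) Since $\sum_{i<j}(n_i+1)(n_j+1)\geq 4>0$ and $k-1\geq 1$, this proves the inequality in \eqref{eq-Null-CT} with equality iff $k=2$; moreover the Jacobi fields arising as normal components of Killing fields of $S^{n+k-1}$ span a space of dimension at most $\dim SO(n+k)-\sum_{j=1}^{k}\dim SO(n_j+1)=\sum_{1\leq i<j\leq k}(n_i+1)(n_j+1)$, which for $k\geq 3$ is strictly smaller than $Null(f)=(k-1)\sum_{i<j}(n_i+1)(n_j+1)$, so $f$ has Jacobi fields that are not Killing fields, i.e.\ $f$ is degenerate.

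The delicate step is the structural claim for $\Lf$: one must verify that at each stage of the iteration the auxiliary $\Delta^{f_j}$-terms of Lemma~\ref{lemma-m}(3), together with the scaling factors appearing there, telescope exactly into $\Delta^{f}=\sum_j\frac{n}{n_j}\Delta^{f_j}$, and that the normal fields $(\eta^{(i)},0,\dots,0)$ remain parallel under each successive product. If one prefers to bypass this identification, an alternative is to induct on $k$ directly from the nullity formula \eqref{eq-null} of Theorem~\ref{thm-main}, applied to $f^{(l)}$ as the minimal product of $f^{(l-1)}$ and $f_{l+1}=S^{n_{l+1}}$ (the totally geodesic factor forcing $Null(f_{l+1})=\hat{\mathfrak{N}}_1=0$); this succeeds but requires propagating through the induction the four auxiliary dimensions $\dim E^{f^{(l)}}_{n^{(l)}}$, $\dim E^{f^{(l)}}_{2n^{(l)}}$, $\dim\mathfrak{E}^{f^{(l)}}_{-n^{(l)}}$ and $\dim\mathfrak{E}^{f^{(l)}}_{-2n^{(l)}}$, which is the longer route.
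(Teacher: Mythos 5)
Your proposal is correct, and it takes a genuinely different (and in my view cleaner) route than the paper's. The paper proves Proposition~\ref{prop-CT-N} via the auxiliary Lemma~\ref{lemma-Null}, an induction on $k$ that propagates five numerical invariants at once — the dimensions of the $\mathcal{L}^{f^{(l)}}$-eigenspaces at $-2n^{(l)}$, $-n^{(l)}$ and $0$ and of the $\Delta^{f^{(l)}}$-eigenspaces at $n^{(l)}$ and $2n^{(l)}$ — and then reads the nullity off at the final stage. You instead identify the Jacobi operator of the Clifford submanifold completely: the normal bundle is trivial with a global parallel orthonormal frame $\eta^{(1)},\dots,\eta^{(k-1)}$ and $\tilde{\mathcal A}^{f}=n\cdot\mathrm{Id}$, so $\mathcal L^{f}=(\Delta^{f}+2n)\otimes\mathrm{Id}_{\mathbb R^{k-1}}$ and the whole spectrum of $\mathcal L^{f}$ is $\mathrm{Spec}(\Delta^{f})-2n$ with every multiplicity scaled by $k-1$. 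This reduces the nullity to $(k-1)\dim E^{f}_{2n}$, which you compute correctly from the product-sphere spectrum, and it simultaneously recovers the index formula of Proposition~\ref{prop-CT}. Your inductive verification of the structural identity via Lemma~\ref{lemma-m}(2),(3) is sound — the only point worth making explicit is that the inductive hypothesis $\mathcal{L}^{f^{(k-2)}}(v\eta^{(i)})=(\Delta^{f^{(k-2)}}v+2n^{(k-2)}v)\eta^{(i)}$, stated for $v\in C^{\infty}(M^{(k-2)})$, applies pointwise in the $S^{n_k}$-variable since $\mathcal{L}^{f^{(k-2)}}$ differentiates only along $M^{(k-2)}$; alternatively one can bypass the induction entirely by computing $\langle A^{f}_{\xi},A^{f}_{\xi'}\rangle=n\langle\xi,\xi'\rangle$ directly from the explicit shape operators $A^{f}_{\xi}=-\mathrm{diag}\bigl(\tfrac{a_1}{c_1}I_{n_1},\dots,\tfrac{a_k}{c_k}I_{n_k}\bigr)$ for a normal $\xi=(a_1x_1,\dots,a_kx_k)$. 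What your approach buys is a complete and transparent description of the Jacobi spectrum in one stroke, rather than just the nullity and index; what the paper's approach buys is a template that transfers, with the same machinery, to minimal products whose factors are not totally geodesic, where $\tilde{\mathcal A}^{f}$ is no longer a multiple of the identity and your closed-form diagonalization is unavailable.
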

The proof is based on the following lemma.
\begin{lemma}\label{lemma-Null}
	Assume $f^{(l-1)}=\left(\sqrt{\frac{n_1}{n}}f_1,\cdots,\sqrt{\frac{n_l}{n}}f_l\right)$, $n=\sum_{j=1}^{l}n_j$, has
	\begin{enumerate}
		\item $\mu^{f^{(l-1)}}_{1}=-2n$, with $\dim\mathfrak{E}_{\mu^{f^{(l-1)}}_{1}}=l-1$;
		\item $\mu^{f^{(l-1)}}_{l}=- n$, with $\dim\mathfrak{E}_{\mu^{f^{(l-1)}}_{l}}=(l-1)\sum_{j=1}^{l}(n_j+1)$;
		\item $Min\{\mu^{f^{(l-1)}}_{\alpha}>-n\}=0$, with   \[\dim\mathfrak{E}_{\mu^{f^{(l-1)}}_{\alpha}}|_{\mu^{f^{(l-1)}}_{\alpha}=0}=(l-1)\sum_{1\leq i<j\leq l}(n_i+1)(n_j+1);\]
		\item $\lambda^{f^{(l-1)}}_1=n$, with \[\dim E^{f^{(l-1)}}_{\lambda_1}=\sum_{j=1}^{l}(n_j+1);\]
		\item $\lambda^{f^{(l-1)}}_{1+\sum_{j=1}^{l}(n_j+1)}=2n$, with \[\dim E^{f^{(l-1)}}_{\lambda_{1+\sum_{j=1}^{l}(n_j+1)}}
=\sum_{1\leq i<j\leq l}(n_i+1)(n_j+1).\]
	\end{enumerate}
		Then $f^{(l)}=\left(\sqrt{\frac{n_1}{\tilde{n}}}f_1,\cdots,\sqrt{\frac{n_l}{\tilde{n}}}f_l,\sqrt{\frac{n_{l+1}}{\tilde{n}}}f_{l+1}\right)$ with $\tilde{n}=n+n_{l+1}$ satisfies
	\begin{enumerate}
		\item $\mu^{f^{(l)}}_{1}=-2\tilde{n}$ with $\dim\mathfrak{E}_{\mu^{f^{(l)}}_{1}}=l$;
		\item $\mu^{f^{(l)}}_{l+1}=-\tilde{n}$
		with
		$\dim\mathfrak{E}_{\mu^{f^{(l)}}_{l}}=l\sum_{j=1}^{l+1}(n_j+1)$;
		\item $Min\{\mu^{f^{(l)}}_{\alpha}>-\tilde{n}\}=0$, with   \[\dim\mathfrak{E}_{\mu^{f^{(l)}}_{\alpha}}|_{\mu^{f^{(l)}}_{\alpha}=0}=l\sum_{1\leq i<j\leq l+1}(n_i+1)(n_j+1);\]	
		\item $\lambda^{f^{(l)}}_1=\tilde{n}$, with \[\dim E^{f^{(l)}}_{\lambda_1}=\sum_{j=1}^{l+1}(n_j+1);\]
		\item $\lambda^{f^{(l)}}_{1+\sum_{j=1}^{l+1}(n_j+1)}=2\tilde{n}$, with \[\dim E^{f^{(l)}}_{\lambda_{1+\sum_{j=1}^{l+1}(n_j+1)}}=\sum_{1\leq i<j\leq l+1}(n_i+1)(n_j+1).\]
	\end{enumerate}
\end{lemma}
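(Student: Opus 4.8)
The plan is to apply Lemma \ref{lemma-m2} (equivalently the formulas \eqref{eq-E11}--\eqref{eq-E13} for the eigenspaces of $\mathcal L^f$ and Proposition \ref{prop-spec}--\eqref{eq-la1} for the spectrum of $\Delta^f$) to the minimal product $f^{(l)}$ of $f^{(l-1)}$ and the totally geodesic sphere $f_{l+1}=S^{n_{l+1}}$, and then to bookkeep the five pieces of data carefully. The key external inputs are: first, the spectrum of $S^{n_{l+1}}$, namely $\lambda^{f_{l+1}}_1=n_{l+1}$ with $\dim E_{\lambda_1}=n_{l+1}+1$ and a spectral gap $(n_{l+1},2n_{l+1}]$ containing no eigenvalue (equation \eqref{eq-Sm}); second, the Jacobi data of the totally geodesic sphere, which has co-dimension $0$, so $\mathcal L^{f_{l+1}}=\Delta^{f_{l+1}}+n_{l+1}$ acts on functions, giving $\mu^{f_{l+1}}_1=-n_{l+1}$ with $\dim\mathfrak E_{\mu_1}=n_{l+1}+1$, $\mu^{f_{l+1}}=0$ with multiplicity $(n_{l+1}+1)n_{l+1}$ (wait --- this needs care, see below), and the next Jacobi eigenvalue $\geq n_{l+1}$; and third, the inductive hypotheses (1)--(5) on $f^{(l-1)}$.

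The main steps, in order. \emph{Step 1 (Laplace spectrum of $f^{(l)}$).} By \eqref{eq-la1}, $\lambda^{f^{(l)}}_1=\min\{\tfrac{\tilde n}{n}\lambda^{f^{(l-1)}}_1,\tfrac{\tilde n}{n_{l+1}}\lambda^{f_{l+1}}_1\}=\min\{\tilde n,\tilde n\}=\tilde n$; the eigenspace for $\tilde n$ is, by Proposition \ref{prop-spec}, the span of $E^{f^{(l-1)}}_n\otimes\{1\}$ and $\{1\}\otimes E^{S^{n_{l+1}}}_{n_{l+1}}$, of dimension $\sum_{j=1}^l(n_j+1)+(n_{l+1}+1)=\sum_{j=1}^{l+1}(n_j+1)$, giving (4). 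The next eigenvalue is $\min\{2\tilde n,\tfrac{\tilde n}{n}\lambda^{f^{(l-1)}}_{1+\sum(n_j+1)},\tfrac{\tilde n}{n_{l+1}}\cdot 2n_{l+1}\}=2\tilde n$ using hypothesis (5) and \eqref{eq-Sm}; its eigenspace splits as $E^{f^{(l-1)}}_{2n}\otimes\{1\}$, $\{1\}\otimes E^{S^{n_{l+1}}}_{2n_{l+1}}=\{0\}$, and $E^{f^{(l-1)}}_n\otimes E^{S^{n_{l+1}}}_{n_{l+1}}$, of dimension $\sum_{1\leq i<j\leq l}(n_i+1)(n_j+1)+0+(\sum_{j=1}^l(n_j+1))(n_{l+1}+1)=\sum_{1\leq i<j\leq l+1}(n_i+1)(n_j+1)$, giving (5). \emph{Step 2 (Jacobi spectrum of $f^{(l)}$).} Using \eqref{eq-E11}--\eqref{eq-E13}, the Jacobi eigenvalues of $f^{(l)}$ below or equal to $0$ are among $\tfrac{\tilde n}{n}\mu^{f^{(l-1)}}_\gamma + \tfrac{\tilde n}{n_{l+1}}\lambda^{f_{l+1}}_\beta$ (from $T^{\perp,1}$), $\tfrac{\tilde n}{n_{l+1}}\mu^{f_{l+1}}_\gamma + \tfrac{\tilde n}{n}\lambda^{f^{(l-1)}}_\alpha$ (from $T^{\perp,2}$), and $n(\tfrac{\lambda^{f^{(l-1)}}_\alpha}{n}+\tfrac{\lambda^{f_{l+1}}_\beta}{n_{l+1}}-2)$ along $\eta$ (from $T^{\perp,0}$). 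The minimal value $-2\tilde n$ comes only from the $\eta$-direction with $\alpha=\beta$ corresponding to the constant, but one must also pick up the contributions $\tfrac{\tilde n}{n}\mu^{f^{(l-1)}}_1=-2\tilde n$ (hypothesis (1), multiplicity $l-1$) and $\tfrac{\tilde n}{n_{l+1}}\mu^{f_{l+1}}$ along $T^{\perp,2}$ paired with $\lambda^{f^{(l-1)}}_\alpha$; a short check shows the $-2\tilde n$-eigenspace has dimension $(l-1)+1=l$, giving (1). For $-\tilde n$: contributions are $\tfrac{\tilde n}{n}\mu^{f^{(l-1)}}_l=-\tilde n$ tensored against $E^{S^{n_{l+1}}}_{\lambda=0}=\{1\}$ (dimension $(l-1)\sum_{j=1}^l(n_j+1)$ by hypothesis (2)), $\tfrac{\tilde n}{n_{l+1}}\mu^{f_{l+1}}_1=-\tilde n$ tensored against $E^{f^{(l-1)}}_{\lambda=0}=\{1\}$ (dimension $n_{l+1}+1$), and the $\eta$-direction $n(\tfrac{\lambda^{f^{(l-1)}}_\alpha}{n}+\tfrac{\lambda^{f_{l+1}}_\beta}{n_{l+1}}-2)=-\tilde n$, which by Step 1's gap forces $\lambda^{f^{(l-1)}}_\alpha=n$, $\lambda^{f_{l+1}}_\beta=n_{l+1}$, contributing $(\sum_{j=1}^l(n_j+1))(n_{l+1}+1)$; one also needs $\tfrac{\tilde n}{n_{l+1}}\mu^{f_{l+1}}$ (next Jacobi value of $S^{n_{l+1}}$, which is $0$) times $\lambda^{f^{(l-1)}}_1=n$ --- this lands at $\tfrac{\tilde n}{n}\cdot n=\tilde n>0$, not $-\tilde n$, so it does not contribute. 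Summing: $(l-1)\sum_{j=1}^l(n_j+1)+(n_{l+1}+1)+(\sum_{j=1}^l(n_j+1))(n_{l+1}+1)$, which after regrouping equals $l\sum_{j=1}^{l+1}(n_j+1)$, giving (2). \emph{Step 3 ($0$-eigenspace).} Collect all triples hitting $0$: $\tfrac{\tilde n}{n}\mu^{f^{(l-1)}}_\alpha=0$ against $\{1\}$ (dimension $(l-1)\sum_{i<j\leq l}(n_i+1)(n_j+1)$ by hypothesis (3)); $\tfrac{\tilde n}{n_{l+1}}\mu^{f_{l+1}}_\gamma=0$ against $E^{f^{(l-1)}}_{\lambda=0}=\{1\}$ --- but here $\mu^{f_{l+1}}_\gamma=0$ means $\lambda^{f_{l+1}}=n_{l+1}$, and this must be paired with the \emph{zero}-Laplace-eigenfunction of $f^{(l-1)}$, contributing $n_{l+1}+1$... wait, one must recheck against \eqref{eq-E13}: the pairing is $\mu^{f^{(l)}}=\tfrac{\tilde n}{n_{l+1}}\mu^{f_{l+1}}_\gamma+\tfrac{\tilde n}{n}\lambda^{f^{(l-1)}}_\alpha$, so $\mu^{f_{l+1}}_\gamma=-n_{l+1}$ (i.e. $\lambda=0$, the constant) with $\lambda^{f^{(l-1)}}_\alpha=n$ yields $-\tilde n+\tilde n=0$, contributing $1\cdot\sum_{j=1}^l(n_j+1)$; and symmetrically from $T^{\perp,1}$, $\mu^{f^{(l-1)}}_\gamma=-n$ with $\lambda^{f_{l+1}}_\beta=n_{l+1}$ contributes $(l-1)\sum_{j=1}^l(n_j+1)\cdot(n_{l+1}+1)$ --- here hypothesis (2) gives $\dim\mathfrak E_{\mu^{f^{(l-1)}}=-n}=(l-1)\sum_{j=1}^l(n_j+1)$. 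Also $\eta$-direction: $\tfrac{\lambda^{f^{(l-1)}}_\alpha}{n}+\tfrac{\lambda^{f_{l+1}}_\beta}{n_{l+1}}=2$ forces both $=n$ and $=n_{l+1}$, contributing $(\sum_{j=1}^l(n_j+1))(n_{l+1}+1)$. Summing all four and regrouping should yield $l\sum_{1\leq i<j\leq l+1}(n_i+1)(n_j+1)$, giving (3).

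The main obstacle will be Step 2 and Step 3: one has to enumerate \emph{all} index pairs in \eqref{eq-E11}--\eqref{eq-E13} that produce a given Jacobi eigenvalue and verify no contribution is missed or double-counted, in particular correctly tracking the Jacobi data of the totally geodesic factor $S^{n_{l+1}}$ (whose Jacobi operator on a co-dimension-$0$ immersion acts on functions, shifting the Laplace spectrum down by $n_{l+1}$, so $\mathrm{Spec}^{\mathcal L^{f_{l+1}}}=\{-n_{l+1},\,0,\,n_{l+1}(n_{l+1}+1)+\cdots\}$ with the stated multiplicities), and confirming that no further Jacobi eigenvalues of $f^{(l-1)}$ in the range $(-n,0)$ or $(0,\infty)$ sneak into the count --- this is where hypotheses (3) and (5) (the spectral gap $(n,2n]$ for $\Delta^{f^{(l-1)}}$ and the absence of Jacobi eigenvalues strictly between $-n$ and $0$) are essential. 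Once the bookkeeping is organized --- say in a small table of (source subbundle, index pair, resulting eigenvalue, multiplicity) --- the five conclusions for $f^{(l)}$ follow by the elementary identities $\sum_{j=1}^l(n_j+1)+(n_{l+1}+1)=\sum_{j=1}^{l+1}(n_j+1)$ and $(l-1)\sum_{j=1}^l(n_j+1)+(n_{l+1}+1)+\big(\sum_{j=1}^l(n_j+1)\big)(n_{l+1}+1)=l\sum_{j=1}^{l+1}(n_j+1)$ together with the analogous identity for the quadratic expression, which I would verify directly.
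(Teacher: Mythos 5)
Your overall strategy---decompose via Lemma~\ref{lemma-m2}, enumerate the index pairs hitting each target eigenvalue, and sum the corresponding dimensions---is exactly the paper's, and your Step~1 and the derivation of conclusion~(1) are correct. But Steps~2 and~3 contain bookkeeping errors that, once carried through, do not yield the stated totals. There is also a conceptual slip: $f_{l+1}\colon S^{n_{l+1}}\to S^{n_{l+1}}$ is a codimension-zero immersion, so $T^{\perp}S^{n_{l+1}}=\{0\}$, hence $T^{\perp,2}M=\{0\}$ and there is \emph{no} Jacobi operator $\mathcal{L}^{f_{l+1}}$; every term of the form $\tfrac{\tilde n}{n_{l+1}}\mu^{f_{l+1}}_{\gamma}$ is phantom, and \eqref{eq-E13} contributes nothing here. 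The paper's proof lists exactly four cases in each of parts~(2) and~(3) for precisely this reason.

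The enumeration then goes wrong. In Step~2, the $\eta$-direction condition for $\mu^{f^{(l)}}=-\tilde n$ is $\tfrac{\lambda^{f^{(l-1)}}}{n}+\tfrac{\lambda^{f_{l+1}}}{n_{l+1}}=1$, whose solutions under hypotheses~(4),~(5) and \eqref{eq-Sm} are the pairs $(n,0)$ and $(0,n_{l+1})$; the pair $(n,n_{l+1})$ you force actually gives $1+1-2=0$, i.e.\ it lands on $\mu=0$, not $\mu=-\tilde n$. You also omit the genuine $T^{\perp,1}$ pair $(\mu^{f^{(l-1)}},\lambda^{f_{l+1}})=(-2n,n_{l+1})$, which contributes $(l-1)(n_{l+1}+1)$. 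Writing $A=\sum_{j=1}^{l}(n_j+1)$ and $B=n_{l+1}+1$, your Step~2 total is $(l-1)A+B+AB$, which does \emph{not} equal $l(A+B)$ in general (take $l=2$, $n_1=n_2=n_3=2$: $27\neq 18$). In Step~3 you similarly omit the $\eta$-direction pair $(2n,0)$, which by hypothesis~(5) contributes $\dim E^{f^{(l-1)}}_{2n}=\sum_{1\le i<j\le l}(n_i+1)(n_j+1)$, and insert instead the phantom term $\sum_{j=1}^{l}(n_j+1)$. Writing $C=\sum_{1\le i<j\le l}(n_i+1)(n_j+1)$, your total $(l-1)C+A+lAB$ differs from the correct $lC+lAB$ by $C-A$, nonzero in general. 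The ``regroupings'' you promise to verify directly would therefore not verify; the fix is to enumerate using only $T^{\perp,0}$ and $T^{\perp,1}$, which gives exactly the four-case list the paper records in its proof of parts~(2) and~(3).
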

\begin{proof}
(1) follows from  the proof of Lemma  \ref{lemma-C}.

(2) For $\mu^{f^{(l)}}_{\alpha}=-\tilde n$, there are 4 kinds of possibilities by Lemma \ref{lemma-m2}:  $(\lambda^{f^{(l-1)}}_{\beta},\lambda^{f_{l+1}}_{\hat\beta})=(n,0)$, or $(\lambda^{f^{(l-1)}}_{\beta},\lambda^{f_{l+1}}_{\hat\beta})=(0,n_{l+1})$, or $(\mu^{f^{(l-1)}}_{\beta},\lambda^{f_{l+1}}_{\hat\beta})=(-2n,n_{l+1})$, or $(\mu^{f^{(l-1)}}_{\beta},\lambda^{f_{l+1}}_{\hat\beta})=(-n,0)$. By assumptions (1) to (5), we obtain 	\[\dim\mathfrak{E}_{\mu^{f^{(l)}}_{l+1}}=\sum_{j=1}^{l}(n_j+1)+n_{l+1}+1+(l-1)(n_{l+1}+1)+(l-1)\sum_{j=1}^{l}(n_j+1).\]

(3) Similarly by Lemma \ref{lemma-m2}, the next eigenvalue of $\mu^{f^{(l)}}_{\alpha}$ is $0$, with 4 kinds of possibilities (Note that $\mu^{f_{l+1}}_{\hat\beta}>0$ for all $\hat\beta$):
$(\lambda^{f^{(l-1)}}_{\beta},\lambda^{f_{l+1}}_{\hat\beta})=(n,n_{l+1})$, or $(\lambda^{f^{(l-1)}}_{\beta},\lambda^{f_{l+1}}_{\hat\beta})=(2n,0 )$, or $(\mu^{f^{(l-1)}}_{\beta},\lambda^{f_{l+1}}_{\hat\beta})=(-n,n_{l+1})$, or $(\mu^{f^{(l-1)}}_{\beta},\lambda^{f_{l+1}}_{\hat\beta})=(0,0)$. By assumptions (1) to (5), we obtain 	
\[\begin{split}
\dim\mathfrak{E}_{\mu^{f^{(l)}}_{\alpha}}|_{\mu^{f^{(l)}}_{\alpha}=0}=&\sum_{j=1}^{l}(n_j+1)(n_{l+1}+1)+\sum_{1\leq i<j\leq l}(n_i+1)(n_j+1)\\
&+(l-1)\sum_{j=1}^{l}(n_j+1)(n_{l+1}+1)+(l-1)\sum_{1\leq i<j\leq l}(n_i+1)(n_j+1).\\
\end{split}\]

(4) and (5) follow easily from \eqref{eq-Lap}.

\end{proof}
{\em Proof 	of Proposition \ref{prop-CT-N}.}
	First it is direct to verify that $f^{(1)}$ satisfies (1) to (5) of Lemma \ref{lemma-Null}, by use of \eqref{eq-E11}, \eqref{eq-E12}, \eqref{eq-E13}, \eqref{eq-Lap}, \eqref{eq-la1} and \eqref{eq-Sm}.
 Then applying Lemma \ref{lemma-Null}, we finish the computation of the nullity of $f$  by induction.

The last inequality of  Proposition \ref{prop-CT-N} is due to the fact that the dimension of the space of Killing fields of $f$ is $\frac{(n+k)(n+k-1)}{2}-\frac{1}{2}\sum_{j=1}^{k}n_j\left(n_j+1\right)=\sum_{1\leq i<j\leq k}(n_i+1)(n_j+1).$
\hfill$\Box$
\section{On the average of $S$ of minimal products}

In this section we will discuss the estimate of $S$ and its average. We refer to   \cite{Per2} for some discussions for the estimate of average of $S$
of minimal hypersurfaces.
\subsection{An estimate of the average of $S$}
From \eqref{eq-SS}, we have
\begin{theorem}\
\begin{enumerate}
\item
	Let $f=(c_1f_1,\hat{c}_1\hat{f}_{1})$  be a minimal product of two oriented, closed minimal submanifolds. Then for any $a\in\R$,
	 \begin{equation}\label{eq-MP-S}
	 \int_MS^a\left( S-n\right)\dd M\geq0,\end{equation}
	 with equality holding if and only if $f$ is congruent to the Clifford hypersurface $C_{n_1,\hat{n}_1}$.  In particular, when $a=0$,  we obtain
	 	 \begin{equation}\label{eq-MP-S2}
	 	   \overline{S}=n\left(1+\frac{1}{n_1} \overline{S_1}+\frac{1}{\hat{n}_1}\overline{\hat{S}_1}\right) \geq n ,\end{equation}
	 with equality holding if and only if $f$ is congruent to  the Clifford hypersurface $C_{n_1,\hat{n}_1}$. Here
\[ \overline{S}=\frac{1}{Vol(f)}\int_M S\dd M,\ \overline{S_1}=\frac{1}{Vol(f_1)}\int_{M_1} S_1\dd M_1  \hbox{ and }\overline{\hat{S}_1}=\frac{1}{Vol(\hat f_1)}\int_{\hat M_1} \hat S_1\dd \hat M_1\]	
{denote the average of the  the square of the length of the second fundamental forms of $f$, $f_1$ and $\hat f_1$  respectively.}
\item
Let $f=(c_1f_1,\cdots, c_kf_k)$  be a minimal product of $k$ oriented, closed minimal submanifolds with $f_j$ being of dimension $n_j$ and $n=n_1+\cdots+n_k$. Then for any $a\in\R$,
	 \begin{equation}\label{eq-MP-S3}
	 \int_MS^a\left( S-(k-1)n\right)\dd M\geq0,\end{equation}
	 with equality holding if and only if $f$ is congruent to the Clifford minimal submanifold $C_{n_1,\cdots,n_k}$ \eqref{eq-CT-1}.  In particular, when $a=0$,  we obtain
	 	 \begin{equation}\label{eq-MP-S3}
	 	   \overline{S}=n\left(k-1+\sum_{j=1}^k\frac{ \overline{S_j}}{n_j}\right) \geq (k-1)n,\end{equation}
	 with equality holding if and only if $f$ is congruent to  the Clifford minimal submanifold $C_{n_1,\cdots,n_k}$.  Here
\[ \overline{S}=\frac{1}{Vol(f)}\int_M S\dd M \hbox{ and }\overline{S_j}=\frac{1}{Vol( f_j)}\int_{ M_j} S_j\dd M_j, 1\leq j\leq k.\]	
\end{enumerate}
\end{theorem}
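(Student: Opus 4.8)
The plan is to derive everything from the fundamental formula \eqref{eq-SS} for the square length of the second fundamental form of a two-factor product, iterated $k-1$ times, together with a pointwise pigeonhole-type inequality. First I would establish the two-factor case. Starting from
\[
S = \frac{n_1\hat c_1^2}{c_1^2}+\frac{\hat n_1 c_1^2}{\hat c_1^2}+\frac{S_1}{c_1^2}+\frac{\hat S_1}{\hat c_1^2},
\]
and using the minimality condition $(c_1,\hat c_1)=(\sqrt{n_1/n},\sqrt{\hat n_1/n})$ from the theorem of Tang--Zhang/Choe--Hoppe, the constant part becomes $\frac{n_1\hat n_1}{n_1}+\frac{\hat n_1 n_1}{\hat n_1}=\hat n_1+n_1=n$, so that $S = n + \frac{n}{n_1}S_1 + \frac{n}{\hat n_1}\hat S_1$ pointwise. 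Since $S_1\geq 0$ and $\hat S_1\geq 0$ everywhere, this already gives $S-n\geq 0$ pointwise, hence $S^a(S-n)\geq 0$ for any $a\in\R$ (note $S\geq n>0$ so $S^a$ is well-defined and positive), and integrating gives \eqref{eq-MP-S}. Integrating $S = n + \frac{n}{n_1}S_1 + \frac{n}{\hat n_1}\hat S_1$ and dividing by $Vol(f)=Vol(f_1)\cdot Vol(\hat f_1)$ (a product metric, up to the constant scalings $c_1,\hat c_1$ which affect both sides the same way) yields \eqref{eq-MP-S2}. Equality in $S-n\geq 0$ at every point forces $S_1\equiv 0$ and $\hat S_1\equiv 0$, i.e.\ both factors are totally geodesic spheres $S^{n_1}$ and $S^{\hat n_1}$, which means $f$ is the Clifford hypersurface $C_{n_1,\hat n_1}$; conversely that hypersurface has $S\equiv n$.

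Next I would handle the $k$-factor case by induction on $k$, writing $f=(c_1f_1,\dots,c_kf_k)$ as the minimal product of $g:=(\tilde c_1 f_1,\dots,\tilde c_{k-1}f_{k-1})$ (a minimal product of $k-1$ factors, of dimension $m:=n_1+\cdots+n_{k-1}$) with $f_k$; one must check that the scaling constants match, i.e.\ that $c_j = \sqrt{n_j/n}$ decomposes compatibly as $c_j = \sqrt{m/n}\cdot\tilde c_j$ with $\tilde c_j=\sqrt{n_j/m}$, which is immediate. Applying the two-factor identity and the inductive hypothesis $S_g = (k-2)m + \sum_{j=1}^{k-1}\frac{m}{n_j}S_j$ gives
\[
S = n + \frac{n}{m}S_g + \frac{n}{n_k}S_k = n + \frac{n}{m}\Bigl((k-2)m+\sum_{j=1}^{k-1}\tfrac{m}{n_j}S_j\Bigr)+\frac{n}{n_k}S_k = (k-1)n + \sum_{j=1}^{k}\frac{n}{n_j}S_j
\]
pointwise. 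Again $S_j\geq 0$ forces $S\geq (k-1)n$ pointwise, and then $S^a(S-(k-1)n)\geq 0$ integrates to \eqref{eq-MP-S3}, while integrating and normalizing by $Vol(f)=\prod_j Vol(f_j)$ gives the average formula. Equality holds iff $S_j\equiv 0$ for all $j$, i.e.\ every factor is a totally geodesic sphere, which is precisely the Clifford minimal submanifold $C_{n_1,\dots,n_k}$ of \eqref{eq-CT-1}.

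The only genuinely delicate point is the bookkeeping in the induction: making sure the constant term telescopes correctly to $(k-1)n$ and that the coefficient of each $S_j$ is exactly $n/n_j$ (independent of how the factors are grouped), and confirming that the volume normalization is consistent when the product metric is rescaled by the $c_j$'s — but since both numerator $\int_M S\,\dd M$ and denominator $Vol(f)$ scale by the same overall factor under a homothety, this causes no trouble. I do not expect any analytic obstacle: unlike index/nullity estimates, this is a pointwise algebraic identity plus positivity of $S_j$, so the ``inequality'' is really just $S_j\geq 0$ summed up, and the sharpness statement is the rigidity case $S_j\equiv 0$.
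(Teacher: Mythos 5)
Your proposal is correct and follows essentially the same approach as the paper: use the pointwise identity $S = n\bigl(1 + S_1/n_1 + \hat S_1/\hat n_1\bigr)$ (a direct consequence of \eqref{eq-SS} with the minimality values $c_1^2=n_1/n$, $\hat c_1^2=\hat n_1/n$), note $S\geq n>0$ so $S^a(S-n)\geq 0$ pointwise, integrate, and characterize equality by $S_1\equiv\hat S_1\equiv 0$. Your explicit induction for the $k$-factor formula $S=(k-1)n+\sum_j \tfrac{n}{n_j}S_j$ and your care with the volume normalization simply spell out steps the paper asserts as ``deduced directly from \eqref{eq-SS}'' and ``follows when we set $a=0$''; there is no substantive difference in method.
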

\begin{proof}
(1) By \eqref{eq-SS}  we obtain that
\[S^{a}(S-n)={n^{a+1}}\left(1+\frac{S_1}{n_1}+\frac{\hat S_1}{\hat{n}_1}\right)^a\left(\frac{S_1}{n_1}+\frac{\hat S_1}{\hat n_1}\right)\geq0.\]
Then \eqref{eq-MP-S} holds.
	Moreover, the equality holds if and only if $S_1=\hat S_1\equiv0$. Then we conclude  that $f$ is congruent to the Clifford hypersurface $C_{n_1,\hat{n}_1}$.
\eqref{eq-MP-S2} follows directly when we set $a=0$.

The proof of (2) is the same as (1) except using the following formula instead of \eqref{eq-SS}
\begin{equation}\label{eq-Skk}
{S}=n\left(k-1+\sum_{j=1}^k\frac{ {S_j}}{n_j}\right).
\end{equation}
It is deduced  directly from \eqref{eq-SS}.

\end{proof}

Recall that the scalar curvature $R$ of the minimal submanifold $f=(c_1f_1,\hat{c}_1\hat{f}_{1})$ satisfies (\cite{Simons,PT})
\[R={n(n-1)}-S.\] It  turns out that the  scalar curvatures $R$, $R_1$ and $\hat R_1$ of $f$, $f_1$ and $\hat f_1$ satisfy the following property:
\begin{equation}\label{eq-R}
  \frac{R}{n}= \frac{R_1}{n_1}+ \frac{\hat R_1}{\hat n_1}.
\end{equation}
And the average property of $S$ in \eqref{eq-MP-S2} can also be written for average scalar curvature:
\begin{equation}\label{eq-Ra}
  \frac{\overline{R}}{n}=\frac{\int_MR\dd M}{nVol(f)}= \frac{\int_{M_1}R_1\dd M_1}{n_1Vol(f_1)}+\frac{\int_{\hat M_1}\hat R_1\dd \hat M_1}{\hat n_1Vol(\hat f_1)}
  = \frac{\overline{R_1}}{n_1}+ \frac{\overline{\hat R_1}}{\hat n_1}.
\end{equation}

\subsection{On minimal product of minimal submanifolds with constant $S$ }
Concerning the value distribution of $S$ when $S$ is constant, we have the following
\begin{theorem}
	Let $f=(c_1f_1,\hat c_1\hat f_1)$ be a minimal product of closed minimal submanifolds as above. Then
	\begin{enumerate}
		\item $S\equiv const$ if and only if both $S_1\equiv const$ and $\hat{S}_1\equiv const$.
		\item If $S\equiv const$, then $S\geq n$, with equality holding if  and only if $f$ is congruent to the Clifford hypersurface $C_{n_1,\hat{n}_1}$.
				\item If $S\equiv const$ and $n<S\leq \frac{5}{3}n$, then $S=\frac{5}{3}n$ and $f$ is congruent to the minimal product of an $(n-2)-$dimensional great sphere and the Veronese two-sphere.

	\end{enumerate}
\end{theorem}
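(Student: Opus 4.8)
The plan is to read off all three statements from the structural identity \eqref{eq-SS} for $S=|A^f|^2$, which for a \emph{minimal} product (where $c_1^2=n_1/n$ and $\hat c_1^2=\hat n_1/n$ are fixed constants) collapses to $S=n\bigl(1+\tfrac{S_1}{n_1}+\tfrac{\hat S_1}{\hat n_1}\bigr)$, i.e.\ to \eqref{eq-Skk} with $k=2$. In this identity $S_1$ is pulled back from $M_1$ along $\pi_1$ and $\hat S_1$ from $\hat M_1$ along $\hat\pi_1$, so the two non-constant terms are functions of independent variables; every part below is a separation-of-variables argument built on this.

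For part (1): if $S$ is constant, the identity makes $(p_1,\hat p_1)\mapsto \tfrac{S_1(p_1)}{n_1}+\tfrac{\hat S_1(\hat p_1)}{\hat n_1}$ constant on $M_1\times\hat M_1$; freezing $\hat p_1$ forces $S_1$ to be constant and freezing $p_1$ forces $\hat S_1$ to be constant, and the converse is immediate. For part (2): since $S_1,\hat S_1\ge0$, the identity gives $S\ge n$ at every point, with $S\equiv n$ if and only if $S_1\equiv\hat S_1\equiv0$, i.e.\ both $f_1$ and $\hat f_1$ are totally geodesic; this is exactly the Clifford minimal hypersurface $C_{n_1,\hat n_1}$, which conversely has $S\equiv n$.

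For part (3): by (1) the quantities $S_1,\hat S_1\ge0$ are now constants, and the hypothesis $n<S\le\tfrac53 n$ is equivalent to $0<\tfrac{S_1}{n_1}+\tfrac{\hat S_1}{\hat n_1}\le\tfrac23$, so each of $\tfrac{S_1}{n_1},\tfrac{\hat S_1}{\hat n_1}$ lies in $[0,\tfrac23]$ and at least one, say $\tfrac{\hat S_1}{\hat n_1}$, is strictly positive. I would then apply the sharp rigidity theorem for compact minimal submanifolds of spheres to each factor $f_*$ with its constant $S_*$. When the codimension $p_*$ is $0$ or $1$, the bound $S_*\le\tfrac23 n_*$ forces $S_*=0$ (trivially if $p_*=0$; by Simons' gap theorem \cite{Simons}, which excludes $0<S_*<n_*$, if $p_*=1$). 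When $p_*\ge2$, the Chern--do Carmo--Kobayashi pinching theorem \cite{CDK}, in its sharpened form with admissible constant $\tfrac23 n_*$ and equality attained only by the Veronese two-sphere in $S^4$, forces either $S_*=0$, or $S_*=\tfrac23 n_*$ with $n_*=2$ and $f_*$ congruent to the Veronese surface. Since $\tfrac{\hat S_1}{\hat n_1}>0$, a positive $S_1$ would give $\tfrac{S_1}{n_1}\ge\tfrac23$ and hence $\tfrac{S_1}{n_1}+\tfrac{\hat S_1}{\hat n_1}>\tfrac23$, a contradiction; thus $S_1=0$ and $f_1$ is a totally geodesic $S^{n_1}$, i.e.\ a great $(n-2)$-sphere (as $n_1=n-\hat n_1$). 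The remaining inequality $0<\tfrac{\hat S_1}{\hat n_1}\le\tfrac23$ then forces $\tfrac{\hat S_1}{\hat n_1}=\tfrac23$, $\hat n_1=2$, and $\hat f_1$ congruent to the Veronese two-sphere; substituting back gives $S=n\bigl(1+\tfrac23\bigr)=\tfrac53 n$, and $f$ is congruent to the minimal product of the great $(n-2)$-sphere with the Veronese two-sphere.

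The only non-elementary ingredient is the sharp pinching/rigidity theorem invoked in part (3); everything else is elementary once \eqref{eq-SS} is in hand. The point requiring the most care is the bookkeeping across the codimension regimes --- one must appeal to the uniform bound $S_*\le\tfrac23 n_*$ rather than to the weaker, codimension-dependent Chern--do Carmo--Kobayashi constant --- together with the companion remark that a total budget of $\tfrac23$ cannot be split between two non-totally-geodesic factors. That remark is precisely what simultaneously pins one factor to be a great sphere and the other to be the Veronese surface, and it is why $\tfrac53 n$ is the only value attainable in the stated range.
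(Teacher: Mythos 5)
Your proof is correct and follows essentially the same route as the paper's: parts (1) and (2) are read off from the product identity \eqref{eq-SS}, and part (3) proceeds by bounding each factor's constant $S_*$ by $\frac{2}{3}n_*$ and applying the sharp intrinsic pinching theorem, for which the paper cites Li--Li \cite{LL} while you describe it as a sharpening of Chern--do Carmo--Kobayashi \cite{CDK}. Your separate treatment of the codimension $0$, $1$, and $\geq 2$ regimes is harmless extra care but not needed once the Li--Li bound is invoked, since the constant $\frac{2}{3}n$ lies strictly below the hypersurface gap constant $n$ and the totally geodesic case is trivial, so the single statement already subsumes all codimensions.
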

\begin{proof}
	(1) and (2) come directly from \eqref{eq-SS}.

For (3), from \eqref{eq-SS} we obtain
\[0<\frac{S_1}{n_1}+\frac{\hat S_1}{\hat n_1}\leq \frac{2}{3}.\]
Then we obtain $S_1\leq\frac{2}{3}n_1$. By \cite{LL}, we obtain that $S_1\equiv 0$ or $S_1\equiv\frac{2}{3}n_1$ and $f_1$ is congruent to the Veronese surface (with $n_1=2$). If $S_1\equiv \frac{4}{3}$, then $\hat S_1\equiv0$ and   $\hat{f}_1$ is congruent to some $\hat{n}_1-$dimensional great sphere. If $S_1\equiv0$, we obtain that
$f_1$ is congruent to some $n_1-$dimensional great sphere, and that $0<\hat{S}_1\leq\frac{2}{3}\hat{n}_1$. Again by \cite{LL}, $\hat{S}_1\equiv\frac{4}{3}$ and $\hat{f}_1$ is congruent to the Veronese surface. This finishes (3).

\end{proof}

An immediate application of \eqref{eq-Skk} yields the following  examples of closed minimal submanifolds with constant $S$.
\begin{example}\
	\begin{enumerate}
		\item Let $f_j$ be the $n_j-$dimensional totally geodesic spheres, $1\leq j\leq k$ and let $f_j$ be the Veronese surface in $S^4$ for $k+1\leq j\leq k+r$. Then
$f=(c_1f_1,\cdots,c_{k+r}f_{k+r})$ is an $n-$dimensional closed minimal submanifold  in $S^{n+p}$ with constant
 \begin{equation}
 S=n\left(\frac{5}{3}r+k-1\right).\end{equation}
 Here  $n=n_1+\cdots+n_k+2r$, $p=3r+k-1$  and $c_j=\sqrt{\frac{n_j}{n}}$ for $1\leq j\leq k+r$.

 In particular, if $r=0$, we obtain that $S=(k-1)n$ and that $f$ is some Clifford minimal submanifold; if $r=1$, we obtain that $S= \left(\frac{2}{3}+k\right)n$ and that $f$ corresponds to a minimal product  of the Veronese surface and some Clifford minimal submanifold.
 \item Let $f_j$ be one of the $n_j-$dimensional minimal isoparametric hypersurfaces with $g_j$ distinct principal curvatures, $1\leq j\leq k$. It is well-known that $g_j$ must be one of $\{1,2,3,4,6\}$ and $S_j=(g_j-1)n_j$ (see \cite{TY2}, \cite{TXY} and \cite{TY3} and reference therein for more details).  Then
$f=(c_1f_1,\cdots,c_{k}f_{k})$ is an $n-$dimensional closed minimal submanifold  in $S^{n+p}$ with constant
 \begin{equation}
 S= \left(\tilde g-1\right)n.\end{equation}
 Here   $n=n_1+\cdots+n_k$, $\tilde g=g_1+\cdots+g_k$, $p=2k-1$ and $c_j=\sqrt{\frac{n_j}{n}}$ for $1\leq j\leq k$.
	\end{enumerate}
\end{example}


{\bf Acknowledgements} {CPW was partly supported by the Project 11831005  of NSFC. PW was partly supported by the Project 11971107 of NSFC. The authors are thankful to Prof. H.Z. Li, Prof. Kusner, Prof. Z.Z. Tang, Prof. Z.X. Xie, Prof. W.J. Yan, Prof. Y.S. Zhang and Prof. E.T. Zhao for valuable discussions. The authors are thankful to Prof. H.Z. Li for introducing   to us Perdomo's paper \cite{Per2}.  The authors are thankful to Prof. Z.Z. Tang and Prof. W.J. Yan for introducing  to us the problem of \cite{TY2}. The authors are thankful to the referees for many valuable corrections and suggestions.}




\begin{appendix}

\section{\label{sec:compint}On the index estimate of a minimal submanifold in $S^{n+p+1}$}

Lemma \ref{le-per} gives an index estimate  of an $n-$dimensional minimal submanifold in $S^{n+p}$, which is of independent interest. Here we provide a proof which is in the same spirit as Perdomo's proof for hypersurfaces. We refer to \cite{KW1} for a different proof of Lemma \ref{le-per} for minimal surfaces in spheres.

\ \\ {\em Proof of Lemma \ref{le-per}.} Consider the subspace
\begin{equation}
\mathcal{E}:=\{\mathbf{a}^{\perp}\in \Gamma(T^{\perp}M)| \mathbf{a}\in\R^{n+p+1}.\}
\end{equation}
Here $T^{\perp}M$  denotes the normal bundle of $M$.
It is well-known that \cite{Simons}
\[\mathcal{E}\subseteq \mathfrak{E}_{\mu^{\phi}_{\alpha}}|_{\mu^{\phi}_{\alpha}=-n}.\]
Therefore we need only  to prove $\dim(\mathcal{E}) = n +p+ 1.$

We argue by contradiction, that is, $\dim(\mathcal{E}) < n +p+ 1.$  Then there exists a unit vector $\mathbf{a} \in \R^{n+p+1}$ such that \[\mathbf{a}^{\perp} \equiv 0 \hbox{ on } M.\]
For each $x\in M$, let $\mathbf{a}^T (x)$ be the tangential projection of $E$ onto $T_xM$. We have
\[\mathbf{a}=\mathbf{a}^T(x)+(\cdots)x.\]
Since $\mathbf a$ is perpendicular to the normal bundle $T^{\perp}M$ at each point of $M$, $\mathbf{a}^T$ defines a vector field tangent to $M$ itself.
  Since  for any $x\neq \pm \mathbf{a}$ in $M$ the great semicircle containing $x$ and $\mathbf{a}$ is an integral curve of $\mathbf{a}^T$ passing through $x$,  it is contained in $M$. This forces $M$ to be equal to the totally geodesic great $n-$sphere $S^n$ containing $\mathbf{a}$ and tangent to $T_{\mathbf{a}}M$, which   contradicts to the assumption $M$ being not totally geodesic.
\hfill$\Box$

\end{appendix}

  	\end{document}